\newcommand{\point}{\ensuremath{\xymatrix{A\ar@<+.6ex>[r]^(.5){\alpha}
&B\ar@<+.6ex>[l]^(.5){\beta}}}}
\newcommand{\rg}{\ensuremath{\xymatrix{A\ar@<+1ex>[r]^{\alpha}\ar@<-1ex>[r]_{\gamma}&B\ar[l]|{\beta}}}}
\newcommand{\eq}[1]{\ \raisebox{-.5ex}{\xy (0,2)*{\scriptscriptstyle ({#1})}; (0,0)*{=}\endxy}\ }
\newtheorem{Theorem}{Theorem}[section]
\newtheorem{Lemma}[Theorem]{Lemma}
\newtheorem{Proposition}[Theorem]{Proposition}
\newtheorem{Definition}[Theorem]{Definition}
\newtheorem{Corollary}[Theorem]{Corollary}
\newtheorem{Remark}[Theorem]{Remark}
\newtheorem{Example}[Theorem]{Example}
\newcommand{\cA}{\ensuremath{\mathcal A}}
\newcommand{\cB}{\ensuremath{\mathcal B}}
\newcommand{\cC}{\ensuremath{\mathcal C}}
\newcommand{\cE}{\ensuremath{\mathcal E}}
\newcommand{\cV}{\ensuremath{\mathcal V}}
\newcommand{\cI}{{\mathcal I}}
\newcommand{\cL}{{\mathcal L}}
\newcommand{\cH}{\ensuremath{\mathcal H}}
\newcommand{\cK}{\ensuremath{\mathcal K}}
\newcommand{\cM}{{\mathcal M}}
\newcommand{\cX}{\ensuremath{\mathcal X}}
\newcommand\Ker{\text{Ker}}
\newcommand\Pt{\mathbf{Pt}}
\newcommand\Gpd{\mathbf{Gpd}}
\newcommand\Set{\mathbf{Set}}
\newcommand\SET{\mathbf{SET}}
\newcommand\AssAlg{\mathbf{AssAlg}}
\newcommand\Gp{\mathbf{Gp}}
\newcommand\Ext{\mathbf{Ext}}
\newcommand\XExt{\mathbf{XExt}}
\newcommand\XMod{\ensuremath{\mathbf{XMod}}}
\newcommand\Bimod{\mathbf{Bimod}}
\newcommand\XBimod{\mathbf{XBimod}}
\newcommand\XBiext{\mathbf{XBiext}}
\newcommand\Mod{\mathbf{Mod}}
\newcommand\Cat{\ensuremath{\mathbf{Cat}}}
\newcommand\CAT{\ensuremath{\mathbf{CAT}}}
\newcommand\Fib{\mathbf{Fib}}
\newcommand\opFib{\mathbf{opFib}}
\newcommand\Vect{\mathbf{Vect}}
\begin{document}

\def \tm{\!\times\!}

\newenvironment{changemargin}[2]{\begin{list}{}{
\setlength{\topsep}{0pt}
\setlength{\leftmargin}{0pt}
\setlength{\rightmargin}{0pt}
\setlength{\listparindent}{\parindent}
\setlength{\itemindent}{\parindent}
\setlength{\parsep}{0pt plus 1pt}
\addtolength{\leftmargin}{#1}\addtolength{\rightmargin}{#2}
}\item}{\end{list}}

\title{Fibered aspects of Yoneda's regular span}

\author{A.\ S.\ Cigoli, S.\ Mantovani, G.\ Metere and E.\ M.\ Vitale}

\maketitle

\begin{abstract}
In this paper we start by pointing out that Yoneda's notion of a regular span $S\colon \cX\to\cA\times\cB$ can be interpreted as a special kind of morphism, that we call \emph{fiberwise opfibration}, in the 2-category $\Fib(\cA)$. We study the relationship between these notions and those of internal opfibration and two-sided fibration. This fibrational point of view makes it possible to interpret Yoneda's Classification Theorem given in his 1960 paper as the result of a canonical factorization, and to extend it to a non-symmetric situation, where the fibration given by the product projection $Pr_0\colon\cA\times\cB\to\cA$ is replaced by any split fibration over $\cA$. This new setting allows us to transfer Yoneda's theory of extensions to the non-additive analog given by crossed extensions for the cases of groups and other algebraic structures.

\textbf{Keywords}: cohomology, crossed extension, fibration, regular span.

\textbf{MSC}: 18A22; 18A32; 18D05; 18D30; 18G50; 08C05; 20J06.
\end{abstract}

\section*{Introduction}

In his pioneering 1960 paper \cite{Y60}, Nobuo Yoneda presents a formal categorical setting in order to formulate the classical theory of $\Ext^n$ functors avoiding the request of having enough projectives. The starting point is to study, in an additive category, the functor sending any exact sequence of length $n$
\[
\xymatrix@C=4ex{
0 \ar[r] & b \ar[r]^j & e_n \ar[r] & e_{n-1} \ar[r] & \cdots & \ar[r] & e_1 \ar[r]^p &a\ar[r] &0
}
\]
to the pair $(a,b)$.

The basic observation is that, thanks to the dual properties of pushouts and pullbacks in Yoneda's additive setting, it is possible to define translations and cotranslations of exact sequences (see Section \ref{sec:Yoneda}). This analysis leads Yoneda to identify a set of formal properties of a functor
\[
S\colon\cX\to\cA\times\cB                                                                                                                                                                                                                                                                                                   \]
in order to get the axioms of what he calls a \emph{regular span}. His key idea is the following: ``The $n$-fold extensions over $a$ with kernel $b$ in an additive category will be considered as some quantity lying between $a$ and $b$, or over the pair $(a,b)$, which we want to classify to get $\Ext^n(a,b)$''\footnote{From the introduction of \cite{Y60}.}.

In fact, his Classification Theorem in \cite[$\S 3.2$]{Y60} follows in a purely formal way from the axioms of regular span, once one considers connected components of the fibers of $S$ (called \emph{similarity classes} in [\emph{loc.\ cit.}]). We can interpret (see Theorem \ref{thm:yoneda}) his result by saying that, with any regular span $S$, it is possible to associate what is nowadays called (see \cite{Str74} and also \cite{Weber}) a \emph{two-sided discrete fibration} $\bar S\colon\bar\cX\to\cA\times\cB$, together with a factorization of $S$ through a functor $Q\colon\cX\to\bar\cX$.

Our aim is to extend this interpretation of Yoneda's theory to a non-additive context, where the paradigmatic example is given by crossed $n$-fold extensions. The latter have been adopted to interpret cocycles in non-abelian cohomology for groups and algebras by many authors (see e.g.\ Mac Lane's Historical Note \cite{ML77}), and more recently by Rodelo in \cite{Rodelo}, giving a normalized version of Bourn cohomology \cite{bourn}.

Moving to the category of groups, for example, the first problem one encounters is that any crossed $n$-fold extension determines an $a$-module $(a,b,\xi)$ instead of a mere pair of objects; as a consequence, the functor $S$ has to be replaced by a functor
\[
P\colon \cX \to \cM
\]
whose codomain (the category of group modules) is no longer a product of two categories. Nevertheless, $\cM$ is still the domain of a split fibration (over the category of groups):
\[
\xymatrix{\cX\ar[dr]_{F}\ar[rr]^P&&\cM\ar[dl]^G\\&\cA}
\]
This suggests to look at the notion of regular span by a fibrational point of view, in order to set up a general scheme including also non-additive cases. This will be achieved by considering on one side the notion of \emph{fiberwise opfibration} (see Definition \ref{def:fiberwise}) and on the other side the one of internal opfibration in the 2-category $\Fib(\cA)$ of fibrations over a fixed category $\cA$. The relationship between the two is completely described in Theorem \ref{thm:opfib_in_Fib_A}, and it turns out that a regular span is nothing but a fiberwise opfibration in $\Fib(\cA)$, with codomain a product projection. This result points out the difference between regular spans and two-sided fibrations, that correspond to internal opfibrations in $\Fib(\cA)$ with codomain a product projection, as proved in \cite{BournPenon}.

It is precisely the notion of fiberwise opfibration in $\Fib(\cA)$ with codomain a split fibration that yields a Classification Theorem \emph{\`a la Yoneda} for the non-symmetric case (Theorem \ref{thm:ClassThm}). Still, we can factor $P$ as
\[
\xymatrix@C=8ex{
\cX\ar[r]^Q\ar[dr]_{F}
&\bar\cX\ar[d]^(.35){\bar F}\ar[r]^{\bar P}
&\cM\ar[dl]^{G}
\\
&\cA
}
\]
where $\bar P$ is a discrete opfibration in $\Fib(\cA)$, which is the non-symmetric analog of the two-sided discrete fibration $\bar S$. Our approach from an internal point of view reveals moreover that both Yoneda's construction of $\bar S$ and our generalization $\bar P$ are actually not \emph{ad hoc} constructions. Indeed, they can be recovered as part of a canonical factorization, namely the internal version in $\Fib(\cA)$ of the so called \emph{comprehensive factorization}, given by initial functors and discrete opfibrations (see Proposition \ref{prop:bar_P_opfib} and Theorem \ref{thm:Q_initial}).

Coming back to crossed $n$-fold extensions, in Sections \ref{sec:groups} and \ref{sec:groups_n} we describe in details how the group case fits in our general scheme. The main difference with the additive case is that in order to obtain translations (i.e.\ the opcartesian liftings in the fibers giving rise to a fiberwise opfibration) we have to replace pushouts by \emph{push forwards}. The internal version of the latter, developed in \cite{pf}, is the main tool to treat crossed $n$-fold extensions in any strongly semi-abelian category, as detailed in Section \ref{sec:Moore}, where we explicitly describe opcartesian liftings as push forwards of crossed extensions. The proof of Lemma \ref{lemma:pf_xext} gives a recipe for performing the push forward of crossed extensions in several categories of algebraic varieties, as (non-unital) associative algebras, Lie algebras and Leibniz algebras among others.

The existence of such opcartesian liftings in the fibers was already stated in \cite{Rodelo} as a normalized version of a more general result of Bourn \cite{Bourn02} concerning the so-called \emph{direction} functor. Proposition \ref{prop:Xext_opfib} explains that these opfibrations in the fibers can be gathered together coherently giving rise to an opfibration in $\Fib(\cC)$.

Section \ref{sec:AssAlg} is devoted to show how the interpretation of Hochschild cohomology of unital associative algebras given in \cite{BauMi02} by means of crossed 
biextensions (simply called crossed extensions in [\emph{loc.\ cit.}]) fits in our general scheme. This case cannot be immediately deduced from the general treatment of Section \ref{sec:Moore}, simply because the category $\AssAlg_1$ of unital associative algebras is not semi-abelian. Nevertheless, we can apply our Theorem \ref{thm:fib_opfib_Moore} to the semi-abelian category $\AssAlg$ of (non-unital) associative algebras and prove in Theorem \ref{thm:fib_opfib_AssAlg} that the fiberwise opfibration over $\AssAlg$ so obtained actually restricts to a fiberwise opfibration over $\AssAlg_1$.

\tableofcontents

\section{Exact sequences and Yoneda's regular spans} \label{sec:Yoneda}

In this section, we recall Yoneda's basic definitions and some of his results. For the sake of simplicity, we shall consider the special case of abelian categories, although in \cite{Y60} results are stated in a slightly more general additive setting.

Given an abelian category  $\cA$, we consider the category $\cX$ of exact sequences:\begin{equation} \label{diag:exact_sequence}
\xymatrix@C=4ex{
x \colon & 0 \ar[r] & b \ar[r]^j & e_n \ar[r] & e_{n-1} \ar[r] & \cdots & \ar[r] & e_1 \ar[r]^p & a \ar[r] & 0
}
\end{equation}
with morphisms $f=(\alpha,\cdots,\beta)\colon x\to x'$ being a $(n+2)$-tuple of arrows of $\cA$ between the terms of the sequences, provided they make the obvious diagram commute:
\begin{equation}\label{diag:exact_sequence_arrws}
\begin{aligned}
\xymatrix@C=4ex{
x\ar@<-.88ex>[d]_f \colon& 0\ar[r] &b\ar[r]^j \ar[d]_{\beta} &e_n\ar[r]\ar[d] &e_{n-1}\ar[r]\ar[d] &\cdots &\ar[r] &e_1\ar[r]^p \ar[d] &a\ar[r]\ar[d]^{\alpha} &0
\\
x'\colon& 0\ar[r] &b'\ar[r]_{j'} &e_n'\ar[r] &e'_{n-1}\ar[r] &\cdots &\ar[r] &e'_1\ar[r]_{p'} &a'\ar[r] &0}
\end{aligned}
\end{equation}

If we fix (the identities over) $a$ and $b$, we can identify the subcategory $\cX_{(a,b)}$ of $\cX$ of sequences starting from $b$ and ending in $a$. Yoneda calls \emph{similarity relation} the equivalence relation on the objects of $\cX_{(a,b)}$ generated by its morphisms. In this way, he obtains as quotient what are currently known as the \emph{connected components} of $\cX_{(a,b)}$. We shall denote this quotient by $\bar{\cX}_{(a,b)}$, and its elements by $\bar{x}, \bar{x}_1, \bar{x}_2\dots$.

Now, given a sequence $x$ as above, by the pushout of $j$ along  an arrow $\beta\colon b\to b'$, it is possible to determine a new sequence $\beta_*(x)$ in $\cX_{(a,b')}$ and a morphism $\hat\beta_x=(1_a,\cdots,\beta)\colon x\to\beta_*(x)$ of exact sequences ending in $a$. This process is called \emph{translation} in [\emph{loc.cit.}]. Dually, by the pullback of $p$ along an arrow $\alpha \colon a'\to a$, it is possible to determine a new sequence $\alpha^*(x)$ in $\cX_{(a',b)}$ and a morphism $\hat\alpha^x=(\alpha,\cdots,1_b)\colon\alpha^*(x)\to x$ of exact sequences ending in $b$. This process is called \emph{cotranslation} in [\emph{loc.cit.}]. We shall denote the arrows $\hat\alpha^x$ and $\hat\beta_x$ by $\hat{\alpha}$ and  $\hat{\beta}$ respectively, when no confusion arises.
As a matter of fact, translations and cotranslations are well defined on the similarity classes. Actually, if we define
\begin{equation}\label{eq:actions}
\beta\bullet \bar{x}=\overline{\beta_*(x)} \qquad \bar{x}\bullet\alpha=\overline{\alpha^*(x)}
\end{equation}
the following equations hold
$$
\beta'\bullet(\beta\bullet \bar{x})=(\beta'\cdot\beta)\bullet \bar{x}\qquad
(\bar{x}\bullet \alpha')\bullet\alpha = \bar{x}\bullet (\alpha'\cdot\alpha)
$$
$$
1_b\bullet\bar{x}=\bar{x},\qquad
(\beta\bullet \bar{x})\bullet \alpha=\beta\bullet (\bar{x}\bullet \alpha)\qquad
\bar{x}\bullet 1_a=\bar{x}
$$
for all similarity classes $\bar x$ and arrows $\alpha$, $\alpha'$, $\beta$ and $\beta'$ in $\cA$ for which compositions make sense.
Hence, translations and cotranslations determine two \emph{compatible categorical actions}, or in other words, a functor
$$
\Ext^n\colon \xymatrix{\cA^{op}\times\cA \ar[r]& \SET}
$$
such that $\Ext^n(a,b)=\bar{\cX}_{(a,b)}$.

In order to have a similar interpretation of the functors $\Ext^n$, within a more general settings of additive categories, Yoneda identifies a set of formal properties that constitute the axioms of what he calls a \emph{regular span}.

We shall recall Yoneda's notion of a regular span soon after recalling the axioms of Grothendieck fibration and opfibration. This will present Yoneda's notion in the language of (op)fibrations, but it is worth observing that the latter are actually special cases of the first.

\medskip
\noindent
Let us consider a functor $F\colon \cE \to \cB$.
\begin{itemize}
\item An arrow $f$ of $\cE$ is $F$-\emph{cartesian}, or cartesian w.r.t.\ $F$, whenever for all $g$ of $\cE$ and $\psi$ of $\cB$ such that $F(g)=F(f)\cdot\psi$ there exists a unique arrow $h$ of $\cE$ such that $f\cdot h=g$ and $F(h)=\psi$.
\item The arrow $f$ is $F$-vertical if $F(f)=1$.
\item An arrow $\varphi$ of $\cB$ has a \emph{cartesian lifting} at the object $x$ of $\cE$ if there exists a cartesian arrow $f$ with codomain $x$ such that $F(f)=\varphi$.
\item The functor $F$ is a \emph{fibration} if it admits enough cartesian liftings. Explicitly, if, for every object $x$ of $\cE$ and every arrow $\varphi$ with codomain $F(x)$ there exists at least one cartesian lifting of $f$ at $x$.
\item The functor $F$ is a  \emph{discrete fibration} if, for every object $x$ of $\cE$ and every arrow $\varphi$ with codomain $F(x)$, there exists exactly one lifting of $f$ at $x$. In this case, the lifting is automatically cartesian.
\item An arrow $f$ of $\cE$ is $F$-\emph{opcartesian}, or opcartesian w.r.t.\ $F$, if $f^{op}$ is cartesian w.r.t.\ $F^{op}$. In this way one gets the dual notions of \emph{opcartesian lifting} and of \emph{(discrete) opfibration}.
\item A \emph{cleavage} for a (op)fibration is a pseudo-functorial choice of (op)cartesian liftings.
\item A fibration is called \emph{split} if it admits a functorial cleavage.
\end{itemize}

Throughout this paper we assume that all (op)fibrations admit a cleavage.

Now we are ready to offer Yoneda's definition in the language of (op)fibrations.
\begin{Definition}[\cite{Y60}]\label{def:reg_span}
A \emph{span} from the category $\cB$ to the category $\cA$ is a functor
$$
\xymatrix{
S\colon \cX\ar[r]&\cA\times \cB
}
$$
or equivalently a pair of functors
$$
\xymatrix{
&\cX\ar[dl]_{S_0}\ar[dr]^{S_1}\\
\cA&&\cB
}
$$
where $S_0=Pr_0\cdot S$ and $S_1=Pr_1\cdot S$.
The span $S$ is termed \emph{regular} if
\begin{itemize}
\item[(R0)] $S_0$ has enough $S_1$-vertical $S_0$-cartesian liftings;
\item[(R1)] $S_1$ has enough $S_0$-vertical $S_1$-opcartesian liftings.
\end{itemize}
\end{Definition}
\noindent In other words, the pair of functors $(S_0,S_1)$ is a regular span if
\begin{itemize}
\item $S_0$ is  a fibration such that for every object $x'$ of $\cX$, and every arrow $\xymatrix{a\ar[r]^-{\alpha}&S_0(x')}$ in $\cA$, there exists at least one $S_0$-cartesian arrow $\xymatrix{x\ar[r]^-{\xi}&x'}$ such that $S_0(\xi)=\alpha$ and $S_1(\xi)=1_{S_1(x')}$;
\item $S_1$ is an opfibration such that for every object $x$ of $\cX$, and every arrow $\xymatrix{S_1(x)\ar[r]^-{\beta}&b}$ in $\cB$, there exists at least one $S_1$-opcartesian arrow $\xymatrix{x\ar[r]^-{\xi}&x'}$ such that $S_1(\xi)=\beta$ and $S_0(\xi)=1_{S_0(x)}$.
\end{itemize}
The notion of (regular) span is clearly a self dual notion, so that often results on $S_1$ can be obtained by dualization of the corresponding results for $S_0$. From now on, if no confusion is likely to occur, we will write $i$-vertical and $i$-(op)cartesian for $S_i$-vertical and $S_i$-(op)cartesian respectively.

\begin{Remark}\label{lm:all_vertical}{\em
If $S=(S_0,S_1)$ is a regular span as above, then:
\begin{itemize}
\item All $0$-cartesian liftings of a given arrow $\alpha$ at a chosen object $x$ are $0$-vertically isomorphic to a $1$-vertical $0$-cartesian lifting of $\alpha$ at $x$.
\item All $1$-opcartesian liftings of a given arrow $\beta$ at a chosen object $x$ are $1$-vertically isomorphic to a $0$-vertical $1$-opcartesian lifting of $\beta$ at $x$.
\end{itemize}
}\end{Remark}
\begin{proof}
Obvious, since (op)cartesian liftings are unique, up to a vertical isomorphism.
\end{proof}

Given a regular span $S$ as in Definition \ref{def:reg_span}, for sake of consistency with the example of $n$-fold extensions, we denote by $\cX_{(a,b)}$ its fiber over the object $(a,b)$ of $\cA\times \cB$, and by $\bar{\cX}_{(a,b)}$ the connected components of the fibers.

The following statement is a relevant motivation to formalize the notion of regular span.

\begin{Theorem}[\cite{Y60}, Classification Theorem]\label{thm:yoneda}
Let $S\colon \cX \to \cA\times \cB$ be a regular span as above. Then the assignment
$$
(a,b)\mapsto \bar{\cX}_{(a,b)}
$$
extends to a functor
$$
\Sigma\colon\cA^{op}\times \cB\to\SET\,.
$$
\end{Theorem}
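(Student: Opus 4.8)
The plan is to define the functor $\Sigma$ on objects by $\Sigma(a,b) = \bar{\cX}_{(a,b)}$ and then show that the translations and cotranslations coming from the regular span structure descend to the connected components, yielding the action on morphisms. Given an arrow $(\alpha,\beta)\colon (a',b)\to(a,b')$ in $\cA^{op}\times\cB$, that is, arrows $\alpha\colon a\to a'$ in $\cA$ and $\beta\colon b\to b'$ in $\cB$, I want to produce a map $\Sigma(\alpha,\beta)\colon \bar{\cX}_{(a,b)}\to\bar{\cX}_{(a',b')}$. The idea is: starting from a representative $x\in\cX_{(a,b)}$, first apply the $1$-vertical $0$-cartesian lifting of $\alpha$ supplied by (R0) to cotranslate $x$ to an object over $(a',b)$, then apply the $0$-vertical $1$-opcartesian lifting of $\beta$ supplied by (R1) to translate the result to an object over $(a',b')$. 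Passing to connected components, I set $\Sigma(\alpha,\beta)(\bar x) = \overline{\beta_\ast(\alpha^\ast(x))}$ in the notation suggested by \eqref{eq:actions}.

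The first substantive step is to check that this assignment is \emph{well defined} on connected components, independently of both the choice of representative $x$ and the choice of (op)cartesian liftings. Independence of the lifting is exactly Remark \ref{lm:all_vertical}: any two cartesian (resp.\ opcartesian) liftings differ by a vertical isomorphism, and a vertical isomorphism is in particular a morphism of the fiber, hence identifies the two results in the same connected component. Independence of the representative follows because a morphism $x\to y$ in $\cX_{(a,b)}$ (a generator of the similarity relation) can be transported through the liftings: using the universal property of the $0$-cartesian lifting one obtains an induced comparison morphism between $\alpha^\ast(x)$ and $\alpha^\ast(y)$, and dually for the opcartesian step, so the two outputs again land in one connected component.

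The second step is to verify the \emph{functoriality axioms}, namely that $\Sigma$ preserves identities and composition. Preservation of identities is immediate, since the identity arrow is its own (op)cartesian lifting and the induced operations fix each connected component. For composition, the content is precisely the three families of equations displayed after \eqref{eq:actions}: associativity of the cotranslation action $\bar x\bullet(\alpha'\cdot\alpha)=(\bar x\bullet\alpha')\bullet\alpha$, associativity of the translation action $\beta'\bullet(\beta\bullet\bar x)=(\beta'\cdot\beta)\bullet\bar x$, and the crucial \emph{middle-interchange} law $(\beta\bullet\bar x)\bullet\alpha = \beta\bullet(\bar x\bullet\alpha)$ which guarantees that the two halves of $\Sigma(\alpha,\beta)$ commute, so that $\Sigma$ genuinely factors through $\cA^{op}\times\cB$ rather than only defining two separate one-variable actions. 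These identities hold because composing (op)cartesian liftings yields an (op)cartesian lifting of the composite (pseudo-functoriality of the cleavage), which after passing to connected components becomes strict equality.

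The main obstacle I expect is the interchange law $(\beta\bullet\bar x)\bullet\alpha = \beta\bullet(\bar x\bullet\alpha)$, because it mixes the fibration $S_0$ and the opfibration $S_1$, which a priori have no reason to commute. The key is that the cartesian lifting guaranteed by (R0) is chosen to be $1$-vertical and the opcartesian lifting from (R1) is chosen to be $0$-vertical; this orthogonality of the two liftings is what lets one build a commuting square in $\cX$ over the square $(\alpha,\beta)$ in $\cA\times\cB$, and then the universal properties force the two composites to be vertically isomorphic, hence equal after quotienting. Once this compatibility is established, the remaining verifications are the routine bookkeeping already encapsulated in the equations following \eqref{eq:actions}, and the functor $\Sigma\colon\cA^{op}\times\cB\to\SET$ is determined.
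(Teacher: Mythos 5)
Your proposal is essentially the argument the paper intends: the paper gives no proof of Theorem \ref{thm:yoneda} (it is quoted from \cite{Y60}), but the discussion of translations, cotranslations and the equations following (\ref{eq:actions}) in Section \ref{sec:Yoneda} is exactly the route you take, and your overall plan (well-definedness on connected components, then functoriality of the two actions plus an interchange law) is the right one and works.

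One claim in your treatment of the interchange law needs correcting. You assert that the universal properties force $\beta_*\alpha^* x$ and $\alpha^*\beta_* x$ to be \emph{vertically isomorphic}; for a general regular span this is false. What the universal properties actually produce is a single $S$-vertical comparison morphism $\beta_*\alpha^* x\to\alpha^*\beta_* x$: the composite $\hat\beta\cdot\hat\alpha\colon\alpha^* x\to\beta_* x$ lies over $(\alpha,\beta)$, so it factors first through the $0$-vertical $1$-opcartesian lifting of $\beta$ at $\alpha^* x$ and then through the $1$-vertical $0$-cartesian lifting of $\alpha$ at $\beta_* x$, and a short check on $S_0$ and $S_1$ shows the resulting comparison is vertical in both coordinates. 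Demanding that this comparison be an isomorphism is precisely condition (C) of Section \ref{sec:opfib_FibA}, i.e.\ the extra axiom separating two-sided fibrations from regular spans, and the paper emphasizes that regular spans need not satisfy it. Your conclusion is unaffected, because a vertical morphism (rather than an isomorphism) already identifies the two connected components; but the justification should be stated in that weaker, correct form. A minor further point: the variance at the start is scrambled --- for an arrow of $\cA^{op}\times\cB$ whose components are $\alpha\colon a'\to a$ in $\cA$ and $\beta\colon b\to b'$ in $\cB$, the induced map goes $\bar\cX_{(a,b)}\to\bar\cX_{(a',b')}$, which is what makes $\alpha^*$ typecheck as a cotranslation.
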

Recall that (when it factors through $\Set$) $\Sigma$ gives rise to what is called a \emph{profunctor} or a \emph{distributor} (see \cite{prof}); the informed reader may already have recognized the definition of a profunctor from the properties that the actions given in (\ref{eq:actions}) must satisfy. On the other hand, Theorem \ref{thm:yoneda} raises a question. We know that the fibers $\cX_{(a,b)}$ gather coherently together in  the category $\cX$. Similarly, also the $\bar{\cX}_{(a,b)}$ can be gathered together in a category $\bar\cX$, which is given explicitly by using the description of $\Sigma$ as a two-sided discrete fibration $\bar S$ (see for example \cite{Weber}). What is not clear at this stage is the relationship between the two categories $\cX$ and $\bar\cX$. Since this will be one of the main points of our investigation on the more general not additive situation, we shall briefly describe the phenomenon in the abelian setting.

We start from the category $\bar{\cX}$. The objects of $\bar{\cX}$ are connected components of the fibers of $S$, i.e.\ equivalence classes, denoted by $\bar{x}$, under the following equivalence relation: $x\sim x'$ if and only if there exists a zig-zag of $S$-vertical arrows connecting $x$ to $x'$. An arrow $\bar x_1\to \bar x_2$ is specified by a pair $(\alpha,\beta)$ of arrows of $\cA$ and $\cB$  such that $dom(\alpha,\beta)=S(x_1)$, $cod(\alpha,\beta)=S(x_2)$, and $\beta\bullet\bar x_1=\bar x_2 \bullet\alpha$.

As usual in the case of profunctors, we can interpret an object $\bar x$ as a \emph{virtual} arrow from $S_0(x)$ to $S_1(x)$. Accordingly, the arrow $(\alpha,\beta)$ described above, can be depicted as a (virtually) commutative diagram:
$$
\xymatrix{
a_1 \ar[d]_{\alpha} \ar@{-->}[r]^{\bar x_1} & b_1 \ar[d]^{\beta}
\\a_2\ar@{-->}[r]_{\bar x_2}&b_2
}
$$
Composition and identities are inherited from the categorical structure of $\cA\times\cB$.
Back to the regular span we started with, the category $\bar\cX$ we have just described appears in a factorization $\bar S\cdot Q$ of $S$:
$$
\xymatrix{\cX\ar@/^4ex/[rr]^-{S}\ar[r]_-{Q}&\bar\cX\ar[r]_-{\bar S}&\cA\times \cB}
$$
This fact was not explicitly recorded in \cite{Y60}; nevertheless, all the necessary ingredients are already present in [\emph{loc.cit}]. The functor $Q$ and the functor $\bar S$ are defined by letting
$$
Q(\xi\colon x_1\to x_2)= S(\xi)\colon \bar x_1\to \bar x_2\,,
$$
and
$$
\bar S ((\alpha,\beta) \colon \bar x_1\to \bar x_2)=(\alpha,\beta)\,.
$$
Clearly the fibers of $\bar S$ are precisely the collections $\bar{\cX}_{(a,b)}$ of connected components of the fibers of $S$.

\smallskip
In conclusion, we can state the main theme of the present work: namely, to give a conceptual understanding of the factorization of $S$, and to extend it to several non abelian settings where the product $\cA\times \cB$ is replaced by a suitable category $\cM$.
This will be achieved by considering properties as \emph{relative to a given base category}. This fibrational point of view is developed in the next section.

\section{Variations on the notion of opfibration over a fixed base}

In this section we shall consider two different instances of the notion of opfibration for a functor $P\colon(\cX,F)\to(\cM,G)$ over a fixed category $\cA$. 
The first concerns the functor $P$ as a 1-cell in the 2-category $\CAT/\cA$, where objects are functors $F$ and $G$ as above, 1-cells are functors $P$ such that $G\cdot P=F$ and a 2-cell $\kappa\colon P\Rightarrow Q\colon F\to G$ is a natural transformation $\kappa\colon P\Rightarrow  Q$ with $G$-vertical components, i.e.\ such that $G\cdot\kappa=id_F$.
\begin{equation}\label{diag:triangle_twocell}
\begin{aligned}
\xymatrix{
\cX \ar@/^2ex/[rr]^{P}_{}="1" \ar@/_2ex/[rr]_{Q}^{}="2"\ar@{=>}"1";"2"^{\kappa} \ar[dr]_{F} & & \cM \ar[dl]^{G} \\
& \cA
}
\end{aligned}
\end{equation}
The second concerns $P$ as a morphism in the 2-category $\Fib(\cA)$, namely the 2-full sub-2-category of $\CAT/\cA$ with objects the fibrations and 1-cells the cartesian functors, i.e.\ those functors over $\cA$ that preserve cartesian arrows.
Occasionally, we shall consider the 2-category $\opFib(\cA)$ of opfibrations over $\cA$.

\smallskip
The fibrational viewpoint suggests the following definition.

\begin{Definition}\label{def:fiberwise}
Let $\cK(\cA)$ be either $\CAT/\cA$, $\Fib(\cA)$ or $\opFib(\cA)$.
\begin{itemize}
\item We say that a morphism $P$ of $\cK(\cA)$ is a \emph{fiberwise (discrete) (op)fibration}  if, for every object $a$ of $\cA$, the restriction  to fibers $P_a\colon \cX_a\to\cM_a$ is a (discrete) (op)fibration.
\item We say that a morphism $P$ of $\cK(\cA)$ is a \emph{fiberwise (op)fibration with globally (op)cartesian liftings} if $P$ is a fiberwise (op)fibration such that (op)cartesian liftings in the fibers are actually (op)cartesian with respect to all the arrows of $\cX$. 
\end{itemize}
\end{Definition}

The reason why we should be interested in fiberwise (op)fibrations becomes evident after the following proposition.

\begin{Proposition}\label{prop:reg_span_and_fibopfib}
The following statements are equivalent:
\begin{itemize}
\item[(i)] The span
$$
\xymatrix@!=4ex{\cX\ar[rr]^-S&&\cA\times\cB}
$$
is regular;

\item[(ii)]  $S$  is a fiberwise opfibration in  $\Fib(\cA)$:
$$
\xymatrix@!=4ex{
\cX\ar[dr]_{S_0}\ar[rr]^S&&\cA\times \cB \ar[dl]^{Pr_0}\\ & \cA
}
$$
\item[(iii)] $S$  is a fiberwise fibration in  $\opFib(\cB)$;
$$
\xymatrix@!=4ex{
\cX\ar[dr]_{S_1}\ar[rr]^S&&\cA\times \cB \ar[dl]^{Pr_1}\\ & \cB
}
$$
\end{itemize}
\end{Proposition}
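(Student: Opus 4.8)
The plan is to prove the equivalence (i)~$\Leftrightarrow$~(ii) directly and then obtain (i)~$\Leftrightarrow$~(iii) from the self-duality of Definition~\ref{def:reg_span}. First I would unpack (ii) concretely. The fiber of $Pr_0$ over an object $a$ is (isomorphic to) $\cB$, and under this identification the restriction $S_a\colon\cX_a\to(\cA\times\cB)_a$ is simply $S_1$ restricted to the fiber $\cX_a$. A routine computation shows that an arrow $(\alpha,\beta)$ of $\cA\times\cB$ is $Pr_0$-cartesian if and only if $\beta$ is invertible (dually, $(\alpha,\beta)$ is $Pr_1$-opcartesian iff $\alpha$ is invertible). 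Hence ``$S$ is a morphism of $\Fib(\cA)$'' unravels to: $S_0$ is a fibration and $S$ sends $S_0$-cartesian arrows to arrows with invertible $S_1$-component; and ``fiberwise opfibration'' adds that each $S_a$ is an opfibration.

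For (i)~$\Rightarrow$~(ii): axiom (R0) immediately gives that $S_0$ is a fibration. To see that $S$ is cartesian, I take any $S_0$-cartesian $\xi$; by Remark~\ref{lm:all_vertical} it is $0$-vertically isomorphic to a $1$-vertical $0$-cartesian lifting, so $S_1(\xi)$ equals the $S_1$-image of a vertical isomorphism and is therefore invertible. Finally, given $x\in\cX_a$ and $\beta\colon S_1(x)\to b$, the $S_0$-vertical $S_1$-opcartesian lifting provided by (R1) lies in $\cX_a$, and one checks that its universal property in $\cX$ restricts to the one in $\cX_a$ (the unique factoring arrow $h$ produced in $\cX$ automatically satisfies $S_0(h)=1_a$, hence lies in $\cX_a$); thus $S_a$ is an opfibration.

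For (ii)~$\Rightarrow$~(i) the deduction of (R0) is short: given $\alpha\colon a\to S_0(x')$, I choose an $S_0$-cartesian lifting $\xi$, whose $S_1$-image is invertible; lifting this iso opcartesianly inside the fiber $\cX_a$ yields an isomorphism $\eta\colon x\to x''$ with $S_1(\eta)=S_1(\xi)$, and then $\xi\cdot\eta^{-1}$ is an $S_1$-vertical $S_0$-cartesian lifting of $\alpha$. The real work is (R1), i.e.\ showing that an $S_a$-opcartesian arrow $\xi\colon x\to x'$ of the fiber is in fact $S_1$-opcartesian in all of $\cX$; I expect this to be the main obstacle. The strategy is the standard ``pull back into the fiber'' argument: given a test arrow $g\colon x\to y$ and $\psi$ with $S_1(g)=\psi\cdot S_1(\xi)$, take an $S_0$-cartesian lifting $\kappa\colon\tilde y\to y$ of $S_0(g)$ (here $S_1(\kappa)$ is invertible because $S$ is cartesian), factor $g=\kappa\cdot\bar g$ with $\bar g$ $S_0$-vertical, apply the fiberwise opcartesian property of $\xi$ to $\bar g$ to obtain a unique $\bar h$ in $\cX_a$, and set $h=\kappa\cdot\bar h$. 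Commutativity and the constraint on $S_1$-images follow from $S_1(\bar g)=S_1(\kappa)^{-1}\cdot S_1(g)$, while uniqueness of $h$ is recovered by running the cartesian factorization backwards and invoking uniqueness in the fiber. This argument simultaneously shows that the fiberwise opcartesian liftings are here globally opcartesian.

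Finally, (i)~$\Leftrightarrow$~(iii) is obtained by running the whole argument with the roles of $S_0$ and $S_1$ (and of $\cA$ and $\cB$, fibration and opfibration, cartesian and opcartesian) interchanged. Since regularity of a span is self-dual, this yields precisely the characterization of (iii) in $\opFib(\cB)$.
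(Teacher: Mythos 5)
Your proof is correct and follows essentially the same route as the paper: the paper disposes of (i)$\Leftrightarrow$(iii) by the same self-duality argument, and reduces (i)$\Leftrightarrow$(ii) to Proposition~\ref{prop:opfib_eq_globallyopfib}, whose proof is exactly your ``pull back into the fiber'' argument (factor the test arrow through an $S_0$-cartesian lifting, use that $S$ preserves cartesianness to get an invertible $S_1$-component, and apply fiberwise opcartesianness to the vertical part). You merely inline that lemma and spell out the routine identifications (fibers of $Pr_0$, characterization of $Pr_0$-cartesian arrows, Remark~\ref{lm:all_vertical}) that the paper leaves implicit.
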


\begin{proof}
We need to prove only the equivalence between (i) and (ii), since (iii) follows by duality. However, the equivalence between (i) and (ii) is a consequence of  Proposition \ref{prop:opfib_eq_globallyopfib}.
\end{proof}

In order to understand the notion of regular span and its generalization to fiberwise opfibration in contexts, it is necessary to compare these definitions with some internal notion in a 2-category $\cK$. As a matter of fact, if $\cK$ is either $\CAT/\cA$ or $\Fib(\cA)$, then the internal notion of opfibration is related with the ordinary definition of opfibration in $\CAT$. Indeed, $\CAT/\cA$ and $\Fib(\cA)$ are representable 2-categories in the sense of \cite{Str74}; therefore, 2-pullbacks and comma objects are computed by means of 2-pullbacks and comma objects in $\CAT$.  Gray in \cite{Gray66} characterizes Grothendieck opfibrations  by a condition on comma categories, which he calls Chevalley criterion. Actually, this criterion provides a characterization of representable internal opfibrations in a finitely 2-complete 2-category (see \cite{CC}).

\begin{Proposition} \label{prop:Chevalley}
A morphism $p\colon E \to B$ in a 2-category with comma objects is
\begin{itemize}
\item a representable opfibration if and only if the canonical arrow $r$ in the diagram below has a left adjoint with the unit being an identity;
\begin{equation} \label{fib2cat}
\begin{aligned}
\xymatrix{
E \downarrow E \ar@/_1ex/[ddr]_{d_0} \ar@/^1ex/[drr]^{pd_1} \ar[dr]_r \\
& p \downarrow B \ar[r]_{p_1} \ar[d]^{p_0} & B \ar[d]^{1_B} \\
& \ar@{}[ur]|(.3){}="1"\ar@{}[ur]|(.7){}="2"\ar@{=>}"1";"2"_\lambda
E \ar[r]_p & B
}
\end{aligned}
\end{equation}
\item a representable \emph{discrete} opfibration if moreover the counit of the adjunction is an identity. In this case, the comparison $r$ is in fact an isomorphism of categories.
\end{itemize}
\end{Proposition}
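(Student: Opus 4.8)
The statement characterizes representable (discrete) opfibrations in a 2-category with comma objects via an adjunction of the canonical comparison arrow $r$. The plan is to reduce the 2-categorical statement to the ordinary Grothendieck definition of opfibration in $\CAT$ by invoking representability: testing against all representable 2-functors $\cK(X,-)$ reduces the claim to the case $\cK = \CAT$, where Gray's characterization applies. So the work is really twofold: first, to unwind what the comma object $p\downarrow B$ and the comparison $r$ compute in $\CAT$; second, to match the adjunction condition with the lifting property that defines an opfibration.

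First I would describe $r$ concretely in $\CAT$. The comma object $E\downarrow E$ has as objects the arrows $e_0 \to e_1$ of $E$, while $p\downarrow B$ has as objects the triples $(e_0, \beta, b)$ with $\beta\colon p(e_0)\to b$ an arrow of $B$. The canonical $r$ sends an arrow $f\colon e_0\to e_1$ to the triple $(e_0, p(f), p(e_1))$, i.e.\ it records only the domain together with the image of $f$ in $B$. The functors $d_0$ (domain) and $pd_1$ (image of codomain) factor through $r$ via the projections $p_0, p_1$ of the comma object, and $\lambda$ is the canonical comma 2-cell. A left adjoint $\ell$ to $r$, with identity unit, must send $(e_0,\beta,b)$ to a chosen arrow $\ell(e_0,\beta,b)\colon e_0\to e_1'$ in $E$ lifting $\beta$, so that the unit $1 = r\ell$ forces $p(\ell(e_0,\beta,b)) = \beta$; the triangle identities and the universal property of the adjunction then encode exactly that this lift is \emph{opcartesian}. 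Thus the existence of the left adjoint with identity unit says precisely that $p$ admits a cleavage of opcartesian liftings, which is the defining condition for an opfibration.

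For the discrete case, the counit of the adjunction $\ell \dashv r$ being an identity means $\ell r = 1$, i.e.\ every arrow $f$ of $E$ already \emph{equals} the chosen opcartesian lift of $(dom\,f, p(f), cod\,f)$. This forces the opcartesian lifting at each object to be unique, which is the characterization of a discrete opfibration; in that situation $r$ and $\ell$ are mutually inverse, giving the stated isomorphism $E\downarrow E \cong p\downarrow B$. To pass from the $\CAT$ case to a general finitely 2-complete 2-category $\cK$, I would note that representability means the comma objects in $\cK$ are preserved by each $\cK(X,-)\colon \cK\to\CAT$, so $p$ is a representable opfibration iff every $\cK(X,p)$ is an opfibration in $\CAT$ iff $\cK(X,r)$ has a left adjoint with identity unit for all $X$; a standard Yoneda-style argument (adjunctions defined representably are genuine adjunctions in $\cK$) then transfers the condition back to $r$ itself.

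The main obstacle I expect is the careful bookkeeping in the forward-and-back translation between the adjunction data $(\ell, \text{unit}, \text{counit})$ and the opcartesian lifting property: verifying that the triangle identities for $\ell\dashv r$ correspond \emph{exactly} to the existence-and-uniqueness clause in the definition of opcartesian, and that the normalization "unit is an identity" is what pins the lift down to start at the prescribed object $e_0$ rather than merely up to isomorphism. The discreteness refinement is comparatively routine once the opfibration case is set up, but one must check that "counit is an identity" is genuinely equivalent to uniqueness of liftings and not merely to their being isomorphisms — this is where the representability hypothesis, ensuring everything is computed pointwise in $\CAT$, does the essential work.
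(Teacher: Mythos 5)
The paper itself offers no proof of Proposition \ref{prop:Chevalley}: it is recalled as a known result, with the $\CAT$ case attributed to Gray \cite{Gray66} and the representable 2-categorical version deferred to the companion note \cite{CC}. So there is no internal argument to compare with, and your plan must be judged on its own merits. On those terms it is essentially correct and is the standard route: the description of $r$, the reading of a left adjoint $\ell$ with identity unit as a choice of liftings $\hat\beta$ with domain $e_0$ and $p(\hat\beta)=\beta$, and the identification of the adjunction bijection $(\xi_0,\xi_1)\mapsto(\xi_0,p(\xi_1))$ with the existence-and-uniqueness clause of opcartesianness all check out (taking the test object of $E\downarrow E$ with identity second component recovers the usual formulation), as does the discrete refinement, where $\ell r=\mathrm{id}$ forces every arrow of $E$ to coincide with the chosen lift of its $r$-image, so that $r$ and $\ell$ are mutually inverse.

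The one step you should not wave at is the transfer between $\cK$ and $\CAT$. A representable opfibration is not merely a $p$ for which each $\cK(X,p)$ is an opfibration: the definition also requires the opcartesian liftings to be stable under precomposition with 1-cells $X'\to X$, equivalently that the pointwise left adjoints $\ell_X$ of $\cK(X,r)$ be 2-natural in $X$. Your appeal to ``adjunctions defined representably are genuine adjunctions in $\cK$'' is valid only once this naturality is in hand (one then evaluates $\ell_{p\downarrow B}$ at the identity, Yoneda-style, to produce the 1-cell $\ell$); an unnatural family of pointwise adjoints does not assemble into a 1-cell of $\cK$. Conversely, a genuine left adjoint of $r$ in $\cK$ yields the stability clause for free. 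Making this exchange precise is the actual content of the 2-categorical half of the statement --- presumably what \cite{CC} is devoted to --- and your write-up should name it explicitly rather than fold it into the phrase ``standard Yoneda-style argument''.
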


We shall call representable internal opfibrations simply \emph{opfibrations}.

\subsection{Opfibrations in $\CAT/\cA$}

The result in the next proposition is already stated (for fibrations) in Example 2.10 of \cite{Weber}, where the property of liftings being globally cartesian is not explicitly highlighted. Actually this is exactly the difference between fiberwise (op)fibrations and (op)fibrations in $\CAT/\cA$, as the following simple counterexample shows.

\begin{Example}
Let us consider the following picture, representing a pair of functors between finite categories, where there are no additional arrows but the identities
\[
\left\{
\begin{array}{ll}
P(\xi)=\mu & P(\xi')=\mu\cdot\mu' \\
G(\mu)=1 & G(\mu')=\alpha
\end{array}
\right.
\]
\[
\xymatrix@C=12ex{
x_0 \ar[drr]^{\xi'}
\\
& x_1 \ar[r]_{\xi} & x_1' & \cX \ar[d]^{P}
\\
m_0\ar[r]^{\mu'}&m_1\ar[r]^{\mu}&m_1'& \cM \ar[d]^{G} \\
a_0\ar[r]^{\alpha}&a_1\ar@{=}[r]&a_1& \cA
}
\]
It is straightforward to check that $P\colon(\cX,GP)\to(\cM,G)$ is a fiberwise fibration, but it is not a fibration in $\CAT/\cA$, since the lifting $\xi$ of $\mu$ is not globally cartesian.
\end{Example}

\begin{Proposition} \label{prop:opfib_in_Cat_A}
A morphism $P\colon (\cX,F)\to (\cM,G)$ in $\CAT/\cA$
\begin{equation}\label{diag:general_triangle}
\begin{aligned}
\xymatrix{\cX\ar[dr]_{F}\ar[rr]^P&&\cM\ar[dl]^G\\&\cA}
\end{aligned}
\end{equation}
is an opfibration in the 2-category $\CAT/\cA$ if an only if it is a fiberwise opfibration with globally opcartesian liftings.
\end{Proposition}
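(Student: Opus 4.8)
The plan is to invoke the Chevalley criterion of Proposition~\ref{prop:Chevalley}, after making the relevant comma objects in $\CAT/\cA$ explicit. Since $\CAT/\cA$ is representable, these are computed from the comma objects of $\CAT$, but the verticality of the $2$-cells imposes a constraint worth spelling out: the comma object $(\cX,F)\downarrow(\cX,F)$ has as \emph{objects} exactly the $F$-vertical arrows $\xi$ of $\cX$ (those with $F(\xi)=\id$), while $P\downarrow(\cM,G)$ has as objects the triples $(x,m,\mu)$ with $\mu\colon P(x)\to m$ a $G$-vertical arrow. A morphism $(s,t)\colon(x,m,\mu)\to(x',m',\mu')$ in the latter is a commuting square $\mu'\cdot P(s)=t\cdot\mu$, and applying $G$ to this relation shows $F(s)=G(t)$ automatically. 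The canonical comparison $r$ sends an $F$-vertical $\xi\colon x\to x'$ to $(x,P(x'),P(\xi))$, and by Proposition~\ref{prop:Chevalley} the functor $P$ is an opfibration in $\CAT/\cA$ if and only if $r$ admits a left adjoint $\ell$ with identity unit.

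First I would unwind what such an $\ell$ is. To give $\ell$ with identity unit amounts to choosing, for each $(x,m,\mu)$, an $F$-vertical arrow $\hat\mu\colon x\to\mu_*(x)$ with $P(\hat\mu)=\mu$ (so that $r\hat\mu=(x,m,\mu)$, forcing the unit to be the identity) that is universal: for every $F$-vertical $\xi\colon y\to y'$ and every morphism $(s,t)\colon(x,m,\mu)\to r(\xi)=(y,P(y'),P(\xi))$ there is a unique $v\colon\mu_*(x)\to y'$ with $P(v)=t$ and $v\cdot\hat\mu=\xi\cdot s$. The crux of the argument is that, although the \emph{objects} of the comma categories are only the vertical arrows, their \emph{morphisms} $(s,t)$ range over arbitrary arrows of $\cX$ and $\cM$. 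Specializing $\xi=\id_z$ for an arbitrary object $z$ (an identity is $F$-vertical) turns this universal property into precisely the statement that $\hat\mu$ is opcartesian with respect to \emph{all} arrows of $\cX$: given $w\colon x\to z$ and $\psi\colon m\to P(z)$ with $P(w)=\psi\cdot\mu$, putting $s=w$, $t=\psi$ yields the unique $h=v\colon\mu_*(x)\to z$ with $h\cdot\hat\mu=w$ and $P(h)=\psi$. Conversely, global opcartesianness of $\hat\mu$ recovers the universal property for every vertical $\xi$ by taking $w=\xi\cdot s$ and $\psi=t$. Hence $r$ has a left adjoint with identity unit if and only if every $G$-vertical $\mu\colon P(x)\to m$ admits an $F$-vertical, globally $P$-opcartesian lifting at $x$.

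It then remains to match this last condition with being a fiberwise opfibration with globally opcartesian liftings, via the dictionary: a $G$-vertical arrow $\mu\colon P(x)\to m$ is the same datum as an arrow of the fibre $\cM_a$ with $a=F(x)$, and an $F$-vertical lifting $\hat\mu$ is an arrow of the fibre $\cX_a$. Thus the liftings produced above are exactly opcartesian liftings for the restrictions $P_a\colon\cX_a\to\cM_a$ that are moreover globally opcartesian. One implication is immediate; for the other I would check that a globally $P$-opcartesian arrow restricts to a $P_a$-opcartesian one in each fibre --- if $w,\psi$ lie over $a$, then the comparison $h$ satisfies $F(h)=G(P(h))=G(\psi)=\id$, so $h$ is again vertical and lies in $\cX_a$ --- which shows that the existence of global vertical liftings already forces every $P_a$ to be an opfibration. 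The naturality and functoriality of $\ell$ on morphisms need not be verified separately, as they follow automatically once the universal arrows are exhibited.

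The main obstacle I anticipate is conceptual rather than computational. One must resist the expectation that, because the comma objects in $\CAT/\cA$ see only vertical arrows as \emph{objects}, the resulting notion of opfibration should be merely fiberwise; the point is that their morphisms are unrestricted, and it is the device $\xi=\id_z$ that converts the comma-level universal property into genuine global opcartesianness. Getting the comma objects and the comparison $r$ right in the slice $2$-category is the only delicate bookkeeping, and everything else reduces to the standard manipulation of opcartesian universal properties.
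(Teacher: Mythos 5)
Your proof is correct, and it follows the paper's overall strategy (the Chevalley criterion of Proposition~\ref{prop:Chevalley} applied to the explicit comma objects in $\CAT/\cA$), but the way you extract global opcartesianness from the adjunction is genuinely different. The paper works mainly with the \emph{counit}: it first shows, via the triangle identities and a naturality square, that the chosen lifting $\iota$ of an identity $(x,1_{Px},Px)$ is invertible, then builds the comparison arrow $\xi'$ by applying $L$ to a suitable morphism of $P\downarrow(\cM,G)$ and composing with $\iota'^{-1}$, and finally gets uniqueness from the hom-set bijection; in the converse direction it constructs $L$, the counit $\epsilon_{(x_0,\nu,x_1)}=(1_{x_0},\omega)$, and checks the triangle identities by hand. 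You instead read everything off the universal-arrow form of the identity-unit condition, the key device being that identities $\id_z$ are legitimate objects of $(\cX,F)\downarrow(\cX,F)$ while morphisms into $r(\id_z)$ are unrestricted pairs $(s,t)$; this collapses both implications into the single equivalence ``$r$ has a left adjoint with identity unit iff every $G$-vertical arrow admits an $F$-vertical, globally opcartesian lifting,'' and the appeal to the pointwise criterion for adjunctions legitimately disposes of functoriality and naturality. Your route is shorter and cleaner; what the paper's more laborious route buys is the explicit description of $L$ and of the counit components, which are reused downstream (in Corollary~\ref{cor:disc_opfib_in_Cat_A}, and in the formulation of condition (C) and the proof of Theorem~\ref{thm:opfib_in_Fib_A}, where $(1_{\alpha^*x},\omega)$ is interpreted as a counit component). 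The only point you should add a line for is that the adjunction you produce is an adjunction \emph{in} $\CAT/\cA$ and not merely in $\CAT$: one must note that $\ell$ commutes with the projections to $\cA$ (immediate, since $H\ell=K$ on objects and morphisms) and that the counit is $H$-vertical (its components are pairs $(1,\omega)$ with $F\omega=1$); this is routine but is part of what ``opfibration in the 2-category $\CAT/\cA$'' asserts.
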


\begin{proof}
Suppose $P$ is an opfibration in $\CAT/\cA$, i.e.\ the arrow $R$ in the diagram below (which is the analog of (\ref{fib2cat}) in $\CAT/\cA$) has a left adjoint with unit an identity.
\begin{equation}\label{diag:def_R}
\begin{aligned}
\xymatrix{
(\cX,F) \downarrow (\cX,F) \ar@/_1ex/[ddr]_{D_0} \ar@/^1ex/[drr]^{PD_1} \ar[dr]_R \\
& P \downarrow (\cM,G) \ar[r]_{P_1} \ar[d]^{P_0} & (\cM,G) \ar[d]^{1_{(\cM,G)}} \\
& \ar@{}[ur]|(.3){}="1"\ar@{}[ur]|(.7){}="2"\ar@{=>}"1";"2"_\lambda
(\cX,F) \ar[r]_P & (\cM,G)
}
\end{aligned}
\end{equation}
Although all constructions are standard, it is worthwhile providing an explicit description of the objects and morphisms involved.

The comma object $P \downarrow (\cM,G)$ can be described as a pair $(\cK,K)$ as follows:
\begin{itemize}
\item an object of $\cK$ is a triple $(x, \beta, m)$, with $x$ in $\cX$, $m$ in $\cM$ and $\beta\colon Px\to m$ an arrow in $\cM$ such that $G(\beta)=1_{Fx=Gm}$
$$
\xymatrix@C=2ex{
(\,x, &Px\ar[rr]^-{\beta}&&m,& m\,)
}
$$
\item a morphism between $(x, \beta, m)$ and $(x', \beta', m')$
is a pair of arrows $(\xi, \mu)$ in $\cX$ and $\cM$ respectively, making the square below commute:
\begin{equation}\label{diag:morphism_H}
\left(
\begin{aligned}
\xymatrix@C=2ex{
x \ar[d]_{\xi}, & Px \ar[rr]^-{\beta} \ar[d]_{P\xi} & & m, \ar[d]^{\mu} & m \ar[d]^{\mu} \\
x', & Px' \ar[rr]_-{\beta'} & & m', & m'
}
\end{aligned}
\right)
\end{equation}
\item the functor $K$ is defined by the assignment on morphisms
$$
K\colon (\xi,\mu)\mapsto F\xi=G\mu\,,
$$
\item the functors $P_0$, $P_1$ and the vertical natural transformation $\lambda$ are defined in the obvious way. For an object $(x,\beta, m)$ one has: $P_0(x,\beta, m)=x$, $P_1(x,\beta, m)=m$ and $\lambda_{(x,\beta, m)}=\beta$.
\end{itemize}

The comma object $(\cX,F) \downarrow (\cX,F) $ can be described similarly as a pair $(\cH,H)$, where:
\begin{itemize}
\item an object of $\cH$ is a triple $(x_0, \nu, x_1)$, with  $\nu\colon x_0\to x_1$ an arrow in $\cX$ such that $F(\nu)=1_{Fx_0=Fx_1}$
$$
\xymatrix@C=2ex{
(\,x_0, & x_0 \ar[rr]^-{\nu} & & x_1,& x_1\,)
}
$$
\item a morphism between $(x_0, \nu, x_1)$ and $(x'_0, \nu', x'_1)$
is a pair of arrows $(\xi_0, \xi_1)$ in $\cX$, making the square below commute:
\[
\left(
\begin{aligned}
\xymatrix@C=2ex{
x_0 \ar[d]_{\xi_0}, & x_0 \ar[rr]^-{\nu} \ar[d]_{\xi_0} & & x_1, \ar[d]^{\xi_1} & x_1 \ar[d]^{\xi_1} \\
x_0', & x_0' \ar[rr]_-{\nu'} & & x_1', & x_1'
}
\end{aligned}
\right)
\]
\item the functor $H$ is defined by extending to morphisms the assignment
$$
H\colon (x_0, \nu, x_1) \mapsto Fx_0=Fx_1 \,,
$$
\item finally we let $D_0(x_0,\nu,x_1)=x_0$, $D_1(x_0,\nu,x_1)=x_1$.
\end{itemize}

The comparison $R$ is defined by extending to morphisms the assignment
$$
R\colon (x_0,\nu,x_1) \mapsto (x_0,P\nu,Px_1) \,.
$$
Let us call $L$ a left adjoint to $R$ in $\CAT/\cA$, $\eta$ and $\epsilon$ the (vertical) unit and counit of the adjunction. By definition, these data have to satisfy the triangle identities
$$
R\epsilon\cdot\eta R=id_{R}\,, \qquad\epsilon L\cdot L\eta = id_{L}\,.
$$
As a consequence, requiring that $\eta$ is an identity, implies that also $R\epsilon$ and $\epsilon L$ are.

Now we shall use the fact that $L$ is left adjoint to $R$ in order to produce opcartesian liftings in the fibers. To this end, let us consider an object $x$ of $\cX$, and an arrow $\beta\colon Px\to m$ in $\cM$ such that $G(\beta)=1_{Fx}$.
These data can be interpreted as an object $(x,\beta,m)$ of $\cK$, so that one can compute $L(x,\beta,m)$.
On the other hand, since the unit of the adjunction is the identity, we know that $RL(x,\beta,m)=(x,\beta,m)$, so that we can legitimately write
$$
L(x,\beta,m)=(x,\hat\beta,\beta_*x)
$$
where $\hat\beta$ is a lifting of $\beta$ at $x$, whose codomain is denoted by $\beta_*x$.
Soon we shall prove that such a lifting is globally opcartesian, but first we have to focus on some special liftings, namely of those objects of $\cK$ that have an underlying identity arrow.

Let us consider the object $(x, 1_{Px}, Px)$, and denote  $L(x, 1_{Px}, Px)=(x, \iota, y)$, where $\iota \colon x\to y$ is the lifting of $1_{Px}$ determined by $L$.
We claim that $\iota$ is an isomorphism. In order to prove this assertion, first we take the component of $\epsilon$ at $(x,1_x,x)$. This must be an arrow
$$
\epsilon_{(x,1_x,x)}=(1_x,\omega)\colon (x,\iota,y)\to (x,1_x, x)\,,
$$
so that $\omega\cdot\iota=1_x$.
Then we take the component of $\epsilon$ at $(x,\iota,y)$:
$$
\epsilon_{(x,\iota,y)}=\epsilon_{L(x,1_{Px},Px)}=(1_x,1_y)\colon (x,\iota,y)\to (x,\iota, y)\,.
$$
Finally, since $LR(1_x,\iota)=L(1_x,1_{Px})=(1_x,1_y)$ we can determine the naturality square of $(1_x,\iota)$
$$
\xymatrix{
(x,\iota,y)\ar[r]^{(1_x,\omega)}\ar[d]_{LR(1_x,\iota)}
&(x,1_x,x)\ar[d]^{(1_x,\iota)}
\\
(x,\iota,y)\ar[r]_{1_x,1_y} &(x,\iota,y)}
$$
whose commutativity is equivalent to the equation $\iota\cdot \omega=1_y$. Therefore, $\iota$ is an isomorphism as announced.

Now we return to the problem of showing that the lifting $\hat\beta$ is globally opcartesian.
To this end, let us consider arrows $\xi$ of $\cX$ and $\mu'$ of $\cM$ such that $P(\xi)=\mu'\cdot\beta$ (and therefore $F(\xi)=G(\mu')$). We need to find a unique arrow $\xi'$ such that $\xi'\hat\beta=\xi$ and $P(\xi')=\mu'$.
\begin{equation}\label{diag:opcartesian}
\begin{aligned}
\xymatrix@C=12ex{
&&x'&\cX
\\
x\ar@/^2ex/[urr]^{\xi}\ar[r]_{\hat\beta}
&\beta_*x\ar@{-->}[ur]_{\xi'}
&&
\\
Px\ar[r]_{\beta}&m\ar[r]_{\mu'}&m'& \cM
}
\end{aligned}
\end{equation}
This data yield the following arrow of $\cK$
\[
\left(
\begin{aligned}
\xymatrix@C=2ex{
x \ar[d]_{\xi}, & Px \ar[rr]^-{\beta} \ar[d]_{P\xi} & & m, \ar[d]^{\mu'} & m \ar[d]^{\mu'} \\
x', & Px' \ar[rr]_-{1} & & m', & m'
}
\end{aligned}
\right)
\]
Applying the left adjoint $L$ to this arrow, we obtain an arrow of $\cH$
\[
\left(
\begin{aligned}
\xymatrix@C=2ex{
x \ar[d]_{\xi}, & x \ar[rr]^-{\hat\beta} \ar[d]_{\xi} & & \beta_*x, \ar[d]^{\xi_0} & \beta_*x \ar[d]^{\xi_0} \\
x', & x' \ar[rr]_-{\iota'} & & y', & y'
}
\end{aligned}
\right)
\]
Now it suffices to put $\xi'=\omega'\cdot \xi_0$, where $\omega'$ is the inverse of the isomorphism $\iota'$. Therefore:
$$
\xi'\cdot \hat\beta =\omega'\cdot \xi_0\cdot \hat\beta=\omega'\cdot\iota'\cdot\xi=\xi\,,
$$
and
$$
P\xi'=P(\omega'\cdot \xi_0)=P(\xi_0)=\mu'\,.
$$
The comparison $\xi'$ is indeed unique. For, let $\varphi$ be another arrow such that $\varphi\cdot\hat\beta=\xi$ and $P(\varphi) =\mu'$. Then, the two pairs $(\xi,\xi')$ and $(\xi,\varphi)$  determine two parallel arrows
$$
\xymatrix@C=10ex{L(x,\beta,m)=(x,\hat\beta,\beta_*x)\ar@/^4ex/[r]^{(\xi,\xi')}\ar@/_4ex/[r]_{(\xi,\varphi)}&(x',1_{x'},x')
}
$$
Since these arrows are sent by the adjoint correspondence onto the same arrow
$$
\xymatrix@C=10ex{(x,\beta,m)\ar[r]^-{(\xi,\mu')}&R(x',1_{x'},x')=(x',1_{m'},m')}
$$
they are equal, i.e.\ $\varphi=\xi'$.

\medskip
Vice versa, let us suppose that $P$ is a fiberwise opfibration with globally opcartesian liftings. We are to define a functor $L$ such that $(L,R,\eta,\epsilon)$ is an adjunction over $\cA$, such that $\eta$ is the identity transformation.

After choosing a collection of cleavages $\hat{(\ )}$, one for each fiber
\begin{itemize}
\item on objects, we let
$$
L(x,\beta,m)=(x,\hat\beta,\beta_*x)\,;
$$
\item in order to define $L$ on $(\xi,\mu)$ (see diagram (\ref{diag:morphism_H})), let us consider the following diagram:
$$
\xymatrix{
x \ar[r]^-{\hat\beta} \ar[d]_{\xi}
& \beta_*x \ar@{-->}[d]^{\varphi} \\
x' \ar[r]_-{\hat\beta'} & \beta'_*x'
}
$$
Since $\hat\beta$ is a globally opcartesian lifting of $\beta$, there exists a unique $\varphi$ such that $\varphi\cdot\hat\beta= \hat{\beta'}\cdot \xi$, and $P\varphi=\mu$.
Let us conclude that, since the opcartesian liftings have been chosen, then this assignment is functorial.
\end{itemize}
The functor $L$ we have just defined lives indeed over $\cA$, since $H\cdot L=K$, and
it is a left adjoint to $R$ in $\CAT/\cA$.

Since $RL=id$, we can choose the identity of $RL$ as unit of the adjunction. For what concerns the counit, for any object $(x_0,\nu,x_1)$ of $\cH$, since $F\nu=1$, there is a factorization $\nu=\omega\cdot \widehat{P\nu}$. Hence we can define the counit as $\epsilon_{(x_0,\nu,x_1)}=(1_{x_0},\omega)$:
\[
\xymatrix{
x_0\ar[d]_{1_{x_0}}\ar[r]^-{\widehat{P\nu}} & (P\nu)_*x_0 \ar@{-->}[d]^{\omega} \\
x_0\ar[r]_{\nu} & x_1
}
\]
This assignment is obviously natural. Moreover,
$$
R(\epsilon_{(x_0,\nu,x_1)}) = R(1_{x_0},\omega)=(1_{x_0},1_{Px_1})=1_{(x_0,P\nu,Px_1)}
$$
so that the equality $R\epsilon\cdot\eta R=R\epsilon=id_R$ holds.
On the other hand,
$$
\epsilon_{L(x,\beta,m)}=\epsilon_{(x,\hat\beta,\beta_*x)}=id
$$
so that $\epsilon L\cdot L\eta=\epsilon L=id$ as desired.
\end{proof}

\begin{Corollary} \label{cor:disc_opfib_in_Cat_A}
A morphism $P\colon (\cX,F)\to (\cM,G)$ in $\CAT/\cA$ as in diagram (\ref{diag:general_triangle})
is a discrete opfibration in the 2-category $\CAT/\cA$ if an only if it is a fiberwise discrete opfibration with globally opcartesian liftings.
\end{Corollary}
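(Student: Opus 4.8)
The plan is to build directly on Proposition \ref{prop:opfib_in_Cat_A}, isolating the extra ``discreteness'' that distinguishes a discrete opfibration from a plain opfibration in $\CAT/\cA$. First I would observe that both conditions in the statement entail their non-discrete counterparts: a discrete opfibration in $\CAT/\cA$ is in particular an opfibration, and a fiberwise discrete opfibration with globally opcartesian liftings is in particular a fiberwise opfibration with globally opcartesian liftings. Hence, by Proposition \ref{prop:opfib_in_Cat_A}, in either direction we may assume that $P$ is already an opfibration in $\CAT/\cA$, equivalently a fiberwise opfibration with globally opcartesian liftings, and it remains only to match the two discreteness requirements. By Proposition \ref{prop:Chevalley}, $P$ is a \emph{discrete} opfibration in $\CAT/\cA$ precisely when, on top of the adjunction $L\dashv R$ with identity unit constructed in the proof of Proposition \ref{prop:opfib_in_Cat_A}, the counit $\epsilon$ is an identity.

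The key step is to translate the condition ``$\epsilon$ is an identity'' into discreteness of the fibers, using the explicit description of $\epsilon$. Recall that for an object $(x_0,\nu,x_1)$ of $\cH$ --- that is, for a vertical arrow $\nu\colon x_0\to x_1$ with $F\nu=1$ --- the counit was computed as $\epsilon_{(x_0,\nu,x_1)}=(1_{x_0},\omega)$, where $\omega$ is the unique vertical factorization $\nu=\omega\cdot\widehat{P\nu}$ of $\nu$ through the chosen opcartesian lifting $\widehat{P\nu}$ of $P\nu$ at $x_0$. Thus $\epsilon$ is an identity if and only if $\omega=1_{x_1}$ for every such $(x_0,\nu,x_1)$, i.e.\ if and only if $\nu=\widehat{P\nu}$ for every vertical arrow $\nu$ of $\cX$.

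Finally I would check that this last condition is equivalent to each restriction $P_a\colon\cX_a\to\cM_a$ being a discrete opfibration. If the fibers are discrete, then for a vertical $\nu\colon x_0\to x_1$ both $\nu$ and $\widehat{P\nu}$ are liftings of $P\nu$ at $x_0$ in $\cX_{Fx_0}$, whence $\nu=\widehat{P\nu}$ by uniqueness. Conversely, suppose $\nu=\widehat{P\nu}$ always holds, and let $\xi_1,\xi_2$ be two vertical liftings of the same arrow $\beta\colon Px_0\to m$ at $x_0$; then $\xi_i=\widehat{P\xi_i}=\widehat\beta$, so $\xi_1=\xi_2$, and the lifting in the fiber is unique (existence being granted since $P$ is already a fiberwise opfibration). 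This is exactly the statement that $P_a$ is a discrete opfibration. Combining with the previous paragraph yields the claimed equivalence; and since now both the unit and the counit of $L\dashv R$ are identities, $R$ and $L$ are mutually inverse, recovering the isomorphism of categories predicted by Proposition \ref{prop:Chevalley}.

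The only real subtlety I anticipate is bookkeeping: one must be sure that the objects $(x_0,\nu,x_1)$ of $\cH$ range exactly over the \emph{vertical} arrows $\nu$ of $\cX$, so that requiring $\epsilon$ to be an identity on all of them captures precisely the fiberwise liftings and nothing more. No new construction is needed beyond the adjunction already produced for Proposition \ref{prop:opfib_in_Cat_A}.
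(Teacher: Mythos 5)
Your proof is correct and follows essentially the same route as the paper's: both reduce the non-discrete content to Proposition \ref{prop:opfib_in_Cat_A} and then use the Chevalley criterion of Proposition \ref{prop:Chevalley} to identify the extra discreteness requirement (identity counit, equivalently $R$ an isomorphism over $\cA$) with uniqueness of liftings in the fibers, read off from the explicit counit $\epsilon_{(x_0,\nu,x_1)}=(1_{x_0},\omega)$. You merely spell out the equivalence ``$\epsilon=\mathrm{id}$ if and only if every $F$-vertical arrow coincides with the chosen opcartesian lifting of its image'' in more detail than the paper does.
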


\begin{proof}
If $P$ is a discrete opfibration in  $\CAT/\cA$, then the functor $R$ is an isomorphism over $\cA$. In particular, its inverse $L$ is a left adjoint and the counit is an identity. Therefore, the opcartesian liftings in the fibers are uniquely determined, and they are globally opcartesian by Proposition \ref{prop:opfib_in_Cat_A}. Vice versa, if $P$ is a fiberwise discrete opfibration with globally opcartesian liftings, it determines a left adjoint to $R$ in $\CAT/\cA$. However in this case the counit is an identity, since it is given by the vertical comparison between two identical liftings, and therefore $R$ is an isomorphism over $\cA$.
\end{proof}

\subsection{Opfibrations in $\Fib(\cA)$} \label{sec:opfib_FibA}

Next proposition establishes that the two notions of \emph{fiberwise opfibration} and
\emph{fiberwise opfibration with globally opcartesian liftings} coincide in $\Fib(\cA)$.

\begin{Proposition}\label{prop:opfib_eq_globallyopfib}
A morphism $P\colon (\cX,F)\to (\cM,G)$ in $\Fib(\cA)$ as in diagram (\ref{diag:general_triangle}) is a fiberwise opfibration if an only if it is a fiberwise opfibration with globally opcartesian liftings.
\end{Proposition}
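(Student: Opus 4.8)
The plan is to prove only the nontrivial implication: a fiberwise opfibration in $\Fib(\cA)$ automatically has globally opcartesian liftings (the converse being immediate by definition). Fix an object $x$ of $\cX$ over $a=Fx$ and a vertical arrow $\beta\colon Px\to m$ in the fiber $\cM_a$, and let $\hat\beta\colon x\to\beta_*x$ be the opcartesian lifting of $\beta$ supplied by the fiberwise opfibration $P_a\colon\cX_a\to\cM_a$; note that $P(\beta_*x)=m$. To show that $\hat\beta$ is $P$-opcartesian I must, given any $\xi\colon x\to x'$ in $\cX$ and $\mu'\colon m\to Px'$ in $\cM$ with $P\xi=\mu'\cdot\beta$, produce a unique $\xi'\colon\beta_*x\to x'$ satisfying $\xi'\cdot\hat\beta=\xi$ and $P\xi'=\mu'$, exactly as in diagram (\ref{diag:opcartesian}).

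The key idea is to descend the entire problem into the fiber over $a$, using the two data available for a morphism of $\Fib(\cA)$: that $F$ is a fibration and that $P$ is a cartesian functor. I would set $\alpha=F\xi$ and first observe that $G\mu'=\alpha$, since $\beta$ is vertical and $G\cdot P=F$. Because $F$ is a fibration, I choose a cartesian lifting $\bar\xi\colon\alpha^*x'\to x'$ of $\alpha$ at $x'$ and factor $\xi=\bar\xi\cdot\xi_0$ with $\xi_0$ vertical. Because $P$ is cartesian, $P\bar\xi$ is $G$-cartesian over $\alpha$, which lets me factor $\mu'=P\bar\xi\cdot\mu'_0$ with $\mu'_0$ vertical in $\cM_a$. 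A short cancellation against the cartesian arrow $P\bar\xi$ then yields $P\xi_0=\mu'_0\cdot\beta$, so that $\xi_0$, $\beta$ and $\mu'_0$ now all live in the single fiber over $a$.

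At this point the fiberwise opcartesianness of $\hat\beta$ applies directly: since $P\xi_0=\mu'_0\cdot P\hat\beta$ inside $\cM_a$, there is a unique vertical $\xi'_0\colon\beta_*x\to\alpha^*x'$ with $\xi'_0\cdot\hat\beta=\xi_0$ and $P\xi'_0=\mu'_0$. Setting $\xi'=\bar\xi\cdot\xi'_0$ then gives $\xi'\cdot\hat\beta=\bar\xi\cdot\xi_0=\xi$ and $P\xi'=P\bar\xi\cdot\mu'_0=\mu'$, as required. For uniqueness I would take any competitor $\varphi$, note that $F\varphi=\alpha$, factor $\varphi=\bar\xi\cdot\varphi_0$ through the \emph{same} cartesian lifting, and cancel $\bar\xi$ (and $P\bar\xi$) to check that $\varphi_0\cdot\hat\beta=\xi_0$ and $P\varphi_0=\mu'_0$; the uniqueness clause of the fiber-level universal property then forces $\varphi_0=\xi'_0$, whence $\varphi=\xi'$.

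I expect the only delicate point to be the bookkeeping that makes the reduction to the fiber legitimate, namely the cancellation identities $P\xi_0=\mu'_0\cdot\beta$ and, in the uniqueness part, $P\varphi_0=\mu'_0$, each of which rests on the uniqueness half of the cartesian universal property of $P\bar\xi$ applied to vertical arrows. Everything else is a routine transcription of the fiber-level property, and crucially no new construction is needed: the whole content is that $F$ being a fibration together with $P$ preserving cartesian arrows suffices to promote opcartesianness from the fibers to the total category. In particular, unlike the $\CAT/\cA$ situation of Proposition \ref{prop:opfib_in_Cat_A}, here no separate ``global'' hypothesis has to be imposed, which is exactly what the statement asserts.
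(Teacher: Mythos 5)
Your proposal is correct and follows essentially the same route as the paper's proof: factor $\xi$ through an $F$-cartesian lifting of $F\xi$ at $x'$ (your $\bar\xi$ is the paper's $\kappa$, your $\xi_0$, $\mu'_0$, $\xi'_0$ are its $\nu$, $\beta'$, $\gamma$), use that $P$ preserves cartesian arrows to descend $\mu'$ into the fiber, and invoke the fiber-level opcartesianness of $\hat\beta$. The only difference is that you spell out the uniqueness argument, which the paper leaves as ``clear from the proof.''
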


\begin{proof}
Let us suppose $P$ is just a fiberwise opfibration, and consider the situation described in diagram (\ref{diag:opcartesian}), where $P(\hat\beta)=\beta$ and $G(\beta)=1_{Fx}$, with $\hat\beta$ being an opcartesian lifting of $\beta$ in the fiber over $Fx$. Let us consider the following diagram
$$
\xymatrix@C=12ex{
x\ar[dr]_{\hat\beta}\ar[drr]^\nu\ar[rrr]^{\xi}&&&x'
\\
&\beta_*x\ar@{-->}[r]_{\gamma} & x_0 \ar[ur]_{\kappa} & & \cX
\\
Px\ar[dr]_{\beta}\ar[drr]^{P\nu}\ar[rrr]^{P\xi}&&&Px'
\\
&m\ar@{-->}[r]_{\beta'}\ar[urr]^{\mu}&Px_0\ar[ur]_{P\kappa}&&\cM
\\
Fx\ar[dr]_{1}\ar[drr]^{1}\ar[rrr]^{F\xi}&&&Fx'
\\
&Fx\ar[r]_{1}\ar[urr]^{G\mu=F\xi}&Fx\ar[ur]_{F\kappa=F\xi}&&\cA
}
$$
\begin{itemize}
\item[(i)] since $F$ is a fibration, we can factor $\xi=\kappa\cdot\nu$, with $\kappa$ $F$-cartesian over $F\xi$ and $\nu$ $F$-vertical;
\item[(ii)] since $P$ preserves cartesian arrows, $P\kappa$ is $G$-cartesian;
\item[(iii)] since $G\mu=G(\mu\cdot\beta)=G(P\kappa\cdot P\nu)=G(P\kappa)$, there exists a unique $\beta'$ such that $P\kappa\cdot \beta'=\mu$ and $G\beta'=1_{Fx}$;
\item[(iv)] moreover, $\beta'\cdot \beta=P\nu$, since $G(\beta'\cdot \beta)=1_{Fx}=F(\nu)=G(P\nu)$ and $P\kappa$ is cartesian; 
\item[(v)] since $\hat\beta$ is an opcartesian lifting of $\beta$ in the fiber over $Fx$, there exists a unique $\gamma$ such that $\gamma\cdot\hat\beta=\nu$ and $P\gamma=\beta'$;
\item[(vi)] finally, $P(\kappa\cdot\gamma)=P\kappa\cdot\beta'=\mu$.
\end{itemize}
From the steps described above, we can define an arrow $\xi'=\kappa\cdot\gamma\colon \beta_*x\to x'$, such that $\xi'\cdot\hat\beta=\xi$ and $P(\xi')=\mu$.
The fact that this arrow is unique is clear from the proof.
\end{proof}


As we shall shortly explain, (although necessary) being a fiberwise opfibration in $\Fib(\cA)$ is not enough in order be an opfibration in the 2-category $\Fib(\cA)$.
What is missing is precisely the condition that we are going to introduce.

Let us be given a fiberwise opfibration $P\colon (\cX,F)\to (\cM,G)$ in $\Fib(\cA)$ as in diagram (\ref{diag:general_triangle}), an object $x$ of $\cX$ and a pair of arrows $\alpha\colon a\to Fx$ in $\cA$ and $\beta\colon Px\to m$ in $\cM$ such that $G\beta=1_{Fx}$. Consider the diagram below
\begin{equation} \label{diag:C}
\xymatrix{
& (\alpha^*\beta)_*\alpha^*x \ar@{-->}[d]^{\omega} \\
\alpha^*x \ar[ur]^{\widehat{P\alpha^*\hat\beta}} \ar[d]_1 & \alpha^*\beta_*x \ar[rr]^{\hat\alpha} & & \beta_*x & \cX \\
\alpha^*x \ar[ur]_(.65){\alpha^*\hat\beta} \ar[rr]_{\hat\alpha} & & x \ar[ur]_{\hat\beta} \\
& P\alpha^*\beta_*x\ar[r]^{\tau'}_{\sim} & \alpha^*m' \ar[r]^{\hat\alpha} & m & \cM \\
P\alpha^*x \ar[ur]^{P\alpha^*\hat\beta} \ar[r]^{\tau}_{\sim} & \alpha^*Px \ar[ur]_(.65){\alpha^*\beta} \ar[r]_{\hat\alpha} & Px \ar[ur]_{\beta}
\\
& a \ar[r]^{1} & a \ar[r]^{\alpha} & Fx & \cA \\
a \ar[ur]^{1} \ar[r]^{1} & a \ar[ur]_(.65){1} \ar[r]_{\alpha} & Fx \ar[ur]_{1}
}
\end{equation}
First take cartesian liftings $\hat\alpha$ of $\alpha$ at $Px$ and at $m$ and call $\alpha^*\beta$ the unique comparison such that $\beta\cdot\hat\alpha=\hat\alpha\cdot\alpha^*\beta$. Then take an opcartesian lifting $\hat\beta$ of $\beta$ at $x$, cartesian liftings of $\alpha$ at $x$ and at $\beta_*x$ and call as before $\alpha^*\hat\beta$ the corresponding comparison. $P\alpha^*\hat\beta$ is isomorphic to $\alpha^*\beta$ through the pair of isomorphisms $(\tau,\tau')$ displayed above. With a little abuse of notation, let us call $(\alpha^*\beta)_*\alpha^*x$ the codomain of the opcartesian lifting of $P\alpha^*\hat\beta$ at $\alpha^*x$. By opcartesianness, there exists a unique $P$-vertical comparison $\omega$ such that $\omega\cdot\widehat{P\alpha^*\hat\beta}=\alpha^*\hat\beta$. Let us observe that, in fact, the pair $(1_{\alpha^*x},\omega)$ can be interpreted as a counit component at $(\alpha^*x,\alpha^*\hat\beta,\alpha^*\beta_*x)$ of a chosen adjunction $L\dashv R$ as in the proof of Proposition \ref{prop:opfib_in_Cat_A}.

We say that $P$ satisfies (C) if the following condition holds:
\begin{itemize}
\item[(C)] For any object $x$ of $\cX$ and any pair of arrows $\alpha\colon a\to Fx$ in $\cA$ and $\beta\colon Px\to m$ in $\cM$ such that $G\beta=1_{Fx}$, the canonical comparison
$$
\omega\colon (\alpha^*\beta)_*\alpha^*x\to\alpha^*\beta_*x
$$
is an isomorphism.
\end{itemize}
Although formally not precise, this notation has two advantages: first, it expresses in a short way a procedure which really only depends on the choice of $\alpha$, $\beta$ and $x$; second, it allows to interpret condition (C) above as an interchange law between the fibrations and opfibrations involved.

\begin{Theorem} \label{thm:opfib_in_Fib_A}
For a  morphism $P\colon (\cX,F)\to (\cM,G)$ in $\Fib(\cA)$ as in diagram (\ref{diag:general_triangle}), the following statements are equivalent:
\begin{itemize}
\item[(I)] $P$ is an opfibration in the 2-category $\Fib(\cA)$;
\item[(II)] $P$ is a fiberwise opfibration in $\Fib(\cA)$ and condition \emph{(C)} holds;
\item[(III)] $P$ is a fiberwise opfibration in $\Fib(\cA)$ with globally opcartesian liftings and condition \emph{(C)} holds.
\end{itemize}
\end{Theorem}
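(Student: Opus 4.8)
The plan is to reduce everything to the Chevalley criterion of Proposition~\ref{prop:Chevalley} and to the explicit adjunction $L\dashv R$ already built in the proof of Proposition~\ref{prop:opfib_in_Cat_A}. The equivalence (II)$\Leftrightarrow$(III) is immediate: Proposition~\ref{prop:opfib_eq_globallyopfib} says that inside $\Fib(\cA)$ a fiberwise opfibration automatically has globally opcartesian liftings, so the extra clause in (III) is vacuous and may be added or removed at will. Hence the whole content is the equivalence between (I) and (III) (equivalently (II)).

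First I would record that the Chevalley diagram used to test (I) is literally the diagram~(\ref{diag:def_R}) from the $\CAT/\cA$ case. Indeed $\Fib(\cA)$ is representable, so the two comma objects $(\cX,F)\downarrow(\cX,F)$ and $P\downarrow(\cM,G)$ are computed on underlying categories exactly as in $\CAT/\cA$; one checks directly that the resulting categories $\cH$ and $\cK$ are fibrations over $\cA$ (take componentwise $F$- and $G$-cartesian liftings and use cartesianness of $P$ to produce the comparison filling the middle), and that the projections and the comparison $R$ are cartesian functors. Thus the only difference between ``opfibration in $\CAT/\cA$'' and ``opfibration in $\Fib(\cA)$'' is that in the latter the left adjoint $L$ of $R$ (with identity unit) is required to be a morphism of $\Fib(\cA)$, i.e.\ a cartesian functor. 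Since $\Fib(\cA)$ is a $2$-full sub-$2$-category of $\CAT/\cA$, an adjunction in $\Fib(\cA)$ is in particular one in $\CAT/\cA$; so (I) already forces, via Proposition~\ref{prop:opfib_in_Cat_A}, that $P$ be a fiberwise opfibration with globally opcartesian liftings, and conversely the $L$ of Proposition~\ref{prop:opfib_in_Cat_A} supplies an adjunction over $\cA$ with identity unit and vertical counit. Everything therefore comes down to a single point: \emph{$L$ is cartesian if and only if condition~\emph{(C)} holds}.

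The heart of the argument is then to evaluate $L$ on a generic $\cK$-cartesian arrow and compare it with the canonical $\cH$-cartesian lifting. A cartesian arrow of $\cK$ over $\alpha\colon a\to Fx$ at $(x,\beta,m)$ is, up to vertical isomorphism, the pair $(\hat\alpha,\hat\alpha)\colon(\alpha^*x,\alpha^*\beta,\alpha^*m)\to(x,\beta,m)$ obtained from $F$- and $G$-cartesian liftings, with $\alpha^*\beta$ the induced comparison. Applying the formula for $L$ on morphisms from Proposition~\ref{prop:opfib_in_Cat_A} and unravelling diagram~(\ref{diag:C}) --- using that $P\beta_*x=m$, that $\alpha^*\hat\beta$ is defined by cartesianness, and the isomorphisms $(\tau,\tau')$ identifying $P\alpha^*\hat\beta$ with $\alpha^*\beta$ --- I would show that $L(\hat\alpha,\hat\alpha)$ equals the composite of the canonical $\cH$-cartesian lifting $(\hat\alpha,\hat\alpha)\colon(\alpha^*x,\alpha^*\hat\beta,\alpha^*\beta_*x)\to(x,\hat\beta,\beta_*x)$ with the vertical pair $(1_{\alpha^*x},\omega)$, where $\omega\colon(\alpha^*\beta)_*\alpha^*x\to\alpha^*\beta_*x$ is exactly the comparison of condition~(C) (and, as noted after diagram~(\ref{diag:C}), is a counit component of $L\dashv R$). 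Since a cartesian arrow precomposed with a vertical arrow is again cartesian precisely when that vertical arrow is an isomorphism, and since every $\cK$-cartesian arrow has this shape, $L$ preserves cartesian arrows exactly when $\omega$ is always invertible, that is, exactly when (C) holds. Assembling the pieces gives (I)$\Leftrightarrow$(III), and together with (II)$\Leftrightarrow$(III) the theorem follows.

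The step I expect to be the main obstacle is the bookkeeping in this last computation: correctly identifying the second component of $L(\hat\alpha,\hat\alpha)$ as $\hat\alpha\cdot\omega$ requires invoking the uniqueness clauses of both the $F$-cartesian and the fiberwise opcartesian universal properties simultaneously, while keeping track of the coherence isomorphisms $\tau,\tau'$ of the chosen cleavages. A secondary, more routine, point to verify carefully is that the comma objects really are shared by $\CAT/\cA$ and $\Fib(\cA)$, so that one and the same Chevalley diagram governs both notions of opfibration.
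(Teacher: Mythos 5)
Your proposal is correct and follows essentially the same route as the paper: dispose of (II)$\Leftrightarrow$(III) via Proposition \ref{prop:opfib_eq_globallyopfib}, reduce (I) to the Chevalley criterion with the same comma objects and the adjunction $L\dashv R$ built in Proposition \ref{prop:opfib_in_Cat_A}, and observe that cartesianness of $L$ amounts to the comparison $\omega$ of condition (C) being invertible, exactly as in the paper's computation around diagram (\ref{diag:def_L}). The only cosmetic difference is that the paper cites \cite{He99} for the fact that the comma objects of $\CAT/\cA$ already live in $\Fib(\cA)$, where you propose to verify this directly.
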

\begin{proof}
That (II) is equivalent to (III) is clear from Proposition \ref{prop:opfib_eq_globallyopfib}.

Let us prove that (I) $\Rightarrow$ (II). We know from the description of comma objects in $\Fib(\cA)$ given in \cite{He99} that (\ref{diag:def_R}) for such a $P$ is a diagram also in $\Fib(\cA)$ (and not only in  $\CAT/\cA$). 
By Proposition \ref{prop:Chevalley}, (I) means that $R$ admits a left adjoint $L$ in $\Fib(\cA)$, with unit $\eta=id$ (such an $L$ realizes a collection of cleavages, one for each fiber). Condition (C) will follow from the fact that $L$ is cartesian.

Indeed, given the triple $\alpha$, $\beta$ and $x$ as in the hypothesis of condition (C), we can construct the cartesian lifting of $\alpha$ at $(x,\beta,m)$ displayed in the left hand side of the diagram below (see also diagram (\ref{diag:C})). Then its image under $L$ must be cartesian in $\cH$ (where $(\cH,H)=(\cX,F) \downarrow (\cX,F)$):
\begin{equation}\label{diag:def_L}
\begin{aligned}
\xymatrix@R=2ex{
\alpha^*x\ar[r]^{\hat\alpha}&x
\\
P\alpha^*x \ar[r]^-{\hat\alpha\cdot\tau} \ar[dd]_{P\alpha^*\hat\beta} & Px \ar[dd]^{\beta}
\\ \\
P\alpha^*\beta_*x \ar[r]_-{\hat\alpha\cdot\tau'}
& m \\
P\alpha^*\beta_*x \ar[r]_-{\hat\alpha\cdot\tau'}
& m
}
\end{aligned}
\qquad\stackrel{L}{\mapsto}\qquad
\begin{aligned}
\xymatrix@R=2ex{
\alpha^*x\ar[r]^{\hat\alpha}&x
\\
\alpha^*x \ar[r]^{\hat\alpha} \ar[dd]_{\widehat{P\alpha^*\hat\beta}} & x \ar[dd]^{\hat\beta}
\\ \\
(\alpha^*\beta)_*\alpha^*x\ar[r]_-{\hat\alpha\cdot\omega}
& \beta_*x
\\
(\alpha^*\beta)_*\alpha^*x\ar[r]_-{\hat\alpha\cdot\omega}
& \beta_*x}
\end{aligned}
\end{equation}
Hence, since projections are cartesian, $\hat\alpha\cdot\omega$ must be a cartesian arrow, and the comparison $\omega$ is an isomorphism:
$$
\xymatrix{
(\alpha^*\beta)_*\alpha^*x\ar[dr]^-{\hat\alpha\cdot\omega} \ar[d]_{\omega}
\\
\alpha^*\beta_*x\ar[r]_{\hat \alpha}
&\beta_*x
}
$$

Now, let us prove that (III) $\Rightarrow$ (I). By Proposition \ref{prop:opfib_in_Cat_A}, $P$ is an opfibration in the 2-category $\CAT/\cA$. Therefore, what remains to be proved is just that $L$ is cartesian. This is a direct consequence of condition (C). Indeed, if we consider the assignment given in diagram (\ref{diag:def_L}), we only have to prove that $\hat\alpha\cdot\omega$ is cartesian in $\cM$, but this is a consequence of the vertical isomorphism provided by condition (C), which compares $\hat\alpha\cdot\omega$ with the cartesian lifting $\hat\alpha$.
\end{proof}

\begin{Corollary} \label{cor:disc_opfib_in_Fib_A}
A  morphism $P\colon (\cX,F)\to (\cM,G)$ in $\Fib(\cA)$ as in diagram (\ref{diag:general_triangle}) is a discrete opfibration in the 2-category $\Fib(\cA)$ if and only if it is a fiberwise discrete opfibration in $\Fib(\cA)$.
\end{Corollary}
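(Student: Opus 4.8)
The plan is to reduce the statement to the characterizations already obtained, the only new input being that discreteness forces condition \emph{(C)} for free. Recall from \cite{He99} that the comma objects occurring in the Chevalley criterion are computed in $\Fib(\cA)$ exactly as in $\CAT/\cA$, so that the comparison functor $R$ of (\ref{diag:def_R}) and any adjoint to it are the same data in both settings. By Proposition \ref{prop:Chevalley}, $P$ is a discrete opfibration in $\Fib(\cA)$ if and only if $R$ admits a left adjoint in $\Fib(\cA)$ with unit and counit identities, that is, if and only if $R$ is an isomorphism in the 2-category $\Fib(\cA)$.

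For the direction from left to right, assume $P$ is a discrete opfibration in $\Fib(\cA)$, so $R$ is an isomorphism in $\Fib(\cA)$. Forgetting cartesianness, $R$ is in particular an isomorphism over $\cA$, hence $P$ is a discrete opfibration in $\CAT/\cA$; Corollary \ref{cor:disc_opfib_in_Cat_A} then gives that $P$ is a fiberwise discrete opfibration, as desired.

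For the converse, suppose $P$ is a fiberwise discrete opfibration in $\Fib(\cA)$. In each fiber the liftings are unique and automatically opcartesian, so $P$ is a fiberwise opfibration and, by Proposition \ref{prop:opfib_eq_globallyopfib}, its liftings are globally opcartesian. Corollary \ref{cor:disc_opfib_in_Cat_A} then makes $P$ a discrete opfibration in $\CAT/\cA$, i.e.\ $R$ is an isomorphism over $\cA$ with inverse $L$. To upgrade this to an isomorphism in $\Fib(\cA)$ it remains to check that $L$ is cartesian. Here I would first note that condition \emph{(C)} holds automatically: in diagram (\ref{diag:C}) both $\alpha^*\hat\beta$ and the opcartesian lifting $\widehat{P\alpha^*\hat\beta}$ are liftings of $P\alpha^*\hat\beta$ at $\alpha^*x$ inside the fiber $\cX_a$, and since $P_a$ is a discrete opfibration such a lifting is unique; hence the two coincide and the comparison $\omega$ is the identity, in particular an isomorphism. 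With \emph{(C)} in hand, the implication (III) $\Rightarrow$ (I) in the proof of Theorem \ref{thm:opfib_in_Fib_A} applies verbatim: in diagram (\ref{diag:def_L}) the arrow $\hat\alpha\cdot\omega$ becomes cartesian, so $L$ preserves cartesian arrows. Thus $R$ is an isomorphism in $\Fib(\cA)$ and, by Proposition \ref{prop:Chevalley}, $P$ is a discrete opfibration in $\Fib(\cA)$.

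The main obstacle is exactly the last step of the converse: that $R$ is an isomorphism of categories over $\cA$ does not by itself force its inverse to preserve cartesian arrows. This is precisely what condition \emph{(C)} controls, and the point of the discreteness hypothesis is that it makes \emph{(C)} hold with no extra work, through the forced equality $\omega=\id$. Everything else is a direct application of Propositions \ref{prop:Chevalley} and \ref{prop:opfib_eq_globallyopfib}, Corollary \ref{cor:disc_opfib_in_Cat_A} and Theorem \ref{thm:opfib_in_Fib_A}.
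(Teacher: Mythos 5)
Your proof is correct and follows essentially the same route as the paper's: both directions reduce to the earlier characterizations (Proposition \ref{prop:Chevalley}, Corollary \ref{cor:disc_opfib_in_Cat_A}, Theorem \ref{thm:opfib_in_Fib_A}), and the key point in the converse is, exactly as in the paper, that discreteness of the $P$-fibers forces the comparison $\omega$ of condition (C) to be an identity, so that (C) comes for free. Your write-up just spells out the steps that the paper's two-line proof leaves implicit.
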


\begin{proof}
If $P$ is a discrete opfibration in $\Fib(\cA)$, then by Theorem \ref{thm:opfib_in_Fib_A} it is a fiberwise discrete opfibration in $\Fib(\cA)$. Conversely, if $P$ is fiberwise discrete opfibration in $\Fib(\cA)$, then the $P$-fibers are discrete, hence the canonical arrow $\omega$ of condition (C) is an identity.
\end{proof}

The results presented so far allow us to identify very accurately the notion of Yoneda's regular span. As a consequence, some different notions already present in literature can be described by considering $\cM=\cA\times \cB$, and $G=Pr_0$. As a matter of fact, by Proposition \ref{prop:reg_span_and_fibopfib}, a regular span $S$  precisely determines a fiberwise opfibration in $\Fib(\cA)$ with codomain a product projection. In this case, asking for condition (C) to hold amounts to the following request:
\begin{itemize}
\item For any object $x$ of $\cX$ and any pair of arrows $\alpha\colon a\to S_0x$ in $\cA$ and $\beta\colon S_1x\to b$ in $\cB$, the canonical comparison
$$
\omega\colon \beta_*\alpha^*x\to\alpha^*\beta_*x
$$
is an isomorphism.
\end{itemize}
A regular span satisfying this condition is called a \emph{two-sided fibration} \cite{Str74}. Then, specilizing Theorem \ref{thm:opfib_in_Fib_A} to this case, we recover the following result, originally due to Bourn and Penon.

\begin{Proposition}[\cite{Y60}] \label{prop:BP}
The following statements are equivalent:
\begin{itemize}
\item[(i)] The span
$$
\xymatrix@!=4ex{\cX\ar[rr]^-S&&\cA\times\cB}
$$
is a two-sided fibration;

\item[(ii)]  $S$  is an opfibration in $\Fib(\cA)$:
$$
\xymatrix@!=4ex{
\cX\ar[dr]_{S_0}\ar[rr]^S&&\cA\times \cB \ar[dl]^{Pr_0}\\ & \cA
}
$$
\item[(iii)] $S$  is a fibration in  $\opFib(\cB)$;
$$
\xymatrix@!=4ex{
\cX\ar[dr]_{S_1}\ar[rr]^S&&\cA\times \cB \ar[dl]^{Pr_1}\\ & \cB
}
$$
\end{itemize}
\end{Proposition}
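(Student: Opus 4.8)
The plan is to obtain this proposition as a direct specialization of Theorem \ref{thm:opfib_in_Fib_A}, applied to the case $\cM=\cA\times\cB$ and $G=Pr_0$, combined with the identification of regular spans carried out in Proposition \ref{prop:reg_span_and_fibopfib}. The conceptual work has already been done in that theorem, so the task here is essentially one of transcription plus a duality argument.

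First I would unwind the definitions. By definition a two-sided fibration is a regular span for which, for every object $x$ of $\cX$ and every pair $\alpha\colon a\to S_0x$ in $\cA$ and $\beta\colon S_1x\to b$ in $\cB$, the canonical comparison $\omega\colon\beta_*\alpha^*x\to\alpha^*\beta_*x$ is an isomorphism. By Proposition \ref{prop:reg_span_and_fibopfib}, being a regular span is the same as being a fiberwise opfibration in $\Fib(\cA)$ with codomain the product projection $Pr_0$. Hence statement (i) is equivalent to the conjunction ``$S$ is a fiberwise opfibration in $\Fib(\cA)$'' together with ``the above comparison is invertible''. I would then invoke the equivalence (I)$\Leftrightarrow$(II) of Theorem \ref{thm:opfib_in_Fib_A}: with $G=Pr_0$, statement (II) reads exactly ``$S$ is a fiberwise opfibration in $\Fib(\cA)$ and condition (C) holds'', while statement (I) of the theorem is precisely statement (ii) of the present proposition. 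This gives (i)$\Leftrightarrow$(ii) once one checks that the general condition (C) collapses to the two-sided fibration condition in the product case.

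That single check is the only point requiring genuine attention. The general comparison of condition (C) is $\omega\colon(\alpha^*\beta)_*\alpha^*x\to\alpha^*\beta_*x$, so I must verify that $(\alpha^*\beta)_*=\beta_*$ when the codomain is a product projection. The key observation is that a $Pr_0$-cartesian lifting of $\alpha$ in $\cA\times\cB$ may be taken to be $(\alpha,1)$, acting only on the first coordinate and leaving the $\cB$-component untouched. Consequently the comparison $\alpha^*\beta$ defined by $\beta\cdot\hat\alpha=\hat\alpha\cdot\alpha^*\beta$ has the same $\cB$-component as $\beta$, whence its push forward agrees with $\beta_*$. This identifies $(\alpha^*\beta)_*\alpha^*x$ with $\beta_*\alpha^*x$ and shows that condition (C) becomes exactly the displayed two-sided fibration requirement.

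Finally, (iii) follows by duality. The notion of span is self-dual and, under the duality exchanging the roles of $(\cA,S_0,Pr_0,\Fib)$ with $(\cB,S_1,Pr_1,\opFib)$, the property of being a two-sided fibration is preserved (the comparison $\omega$ is symmetric in $\alpha$ and $\beta$). Applying the already-established equivalence (i)$\Leftrightarrow$(ii) to the dual situation yields (i)$\Leftrightarrow$(iii), which completes the argument. I expect no real obstacle beyond the product reduction of condition (C) described above; everything else is a formal consequence of Theorem \ref{thm:opfib_in_Fib_A} and Proposition \ref{prop:reg_span_and_fibopfib}.
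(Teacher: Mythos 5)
Your proposal is correct and follows essentially the same route as the paper, which presents Proposition \ref{prop:BP} precisely as the specialization of Theorem \ref{thm:opfib_in_Fib_A} to $\cM=\cA\times\cB$, $G=Pr_0$, after noting (via Proposition \ref{prop:reg_span_and_fibopfib}) that condition (C) collapses to the two-sided fibration comparison in the product case, with (iii) by the self-duality of spans. Your explicit check that $\alpha^*\beta=\beta$ when the cartesian lifting is $(\alpha,1)$ is the one detail the paper leaves implicit, and it is correct.
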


In fact, the main example of regular span in \cite{Y60} given by abelian $n$-fold extensions is actually a two-sided fibration. On the other hand, as we pointed out in Section \ref{sec:Yoneda}, every regular span $S$ determines a two-sided discrete fibration $\bar S$. This means that Yoneda's notion of regular span is not quite a two-sided fibration, but its structure is enough to ensure that the discrete version of condition (C) holds for $\bar S$.

As recalled above, the notion of regular span is auto-dual. This is not only reflected in the symmetry of the definition, but also in the concrete example of $n$-fold extensions, where it relies on the auto-dual categorical structure of the base category:  cotranslations are obtained by pullback, while translations by pushout.
This symmetry is broken when we move to the non-abelian case of crossed $n$-fold extensions, which is studied in details in Section \ref{sec:Examples}. This is the reason why we needed to generalize regular spans to fiberwise opfibrations and two-sided fibrations to opfibrations between fibrations over a fixed base.

\section{The non-symmetric Classification Theorem} \label{sec:barP}

From now on, we shall be interested in a fiberwise opfibration $P\colon (\cX,F)\to (\cM,G)$:
\begin{equation}\label{diag:fib_opfib_2}
\begin{aligned}
\xymatrix{\cX\ar[dr]_{F}\ar[rr]^P&&\cM\ar[dl]^G\\&\cA}
\end{aligned}
\end{equation}
where the fibration $G$ is split. This means that, for $\alpha\colon a_1\to a_2$ and $\alpha'\colon a_2\to a_3$ in $\cA$, we can choose a cleavage in such a way that
$$
\alpha^*\cdot\alpha'^*=(\alpha'\cdot\alpha)^*\colon \cM_{a_3}\to \cM_{a_1}\quad\text{and}\quad (1_{a_1})^*=1_{\cM_{a_1}}
$$
where $(\ )^*$ turns any arrow in its corresponding change of base functor.

A relevant consequence of the notion of fiberwise opfibration is that we can make cartesian $F$-liftings compatible with the chosen cartesian $G$-liftings. This is a consequence of the following lemma.

\begin{Lemma}\label{lm:compatibility_of_P}
Let us consider  a fiberwise opfibration in $\Fib(\cA)$ as in diagram (\ref{diag:fib_opfib_2}), with G a split fibration. Then, for each $\alpha\colon a\to F(x)$, every $P$-cartesian lifting $\hat\alpha^m$ of $\alpha$ at $m$ in $\cM$ and every object $x$ of $\cX$, there exists an $F$-cartesian lifting $\hat\alpha^x$ of $\alpha$ at $x$ such that $P(\hat\alpha^x)=\hat\alpha^m$.
\end{Lemma}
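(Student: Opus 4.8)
The plan is to build $\hat\alpha^x$ by taking any $F$-cartesian lifting supplied by the fibration $F$ and then correcting its image under $P$ so that it coincides with the prescribed lifting, which (in the notation of the surrounding text) is a $G$-cartesian lifting $\hat\alpha^m$ of $\alpha$ at $m=Px$, with domain $\alpha^*m$. First I would use that $F$ is a fibration to choose \emph{some} $F$-cartesian lifting $\tilde\alpha\colon x'\to x$ of $\alpha$ at $x$. Since $P$ is a $1$-cell of $\Fib(\cA)$, i.e.\ a cartesian functor, the arrow $P\tilde\alpha\colon Px'\to m$ is $G$-cartesian and $G(P\tilde\alpha)=F\tilde\alpha=\alpha$. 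Thus $P\tilde\alpha$ and $\hat\alpha^m$ are two $G$-cartesian liftings of the same arrow $\alpha$ at $m$, so by the uniqueness of cartesian liftings up to vertical isomorphism there is a unique $G$-vertical isomorphism $\theta\colon\alpha^*m\to Px'$ with $P\tilde\alpha\cdot\theta=\hat\alpha^m$.

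The crux is to realise this comparison isomorphism $\theta$, which a priori lives only in $\cM_a$, as an honest isomorphism in $\cX_a$. Here I would invoke the hypothesis that $P$ is a fiberwise opfibration: the restriction $P_a\colon\cX_a\to\cM_a$ is an opfibration. The object $x'$ lies in the fiber $\cX_a$ (as $Fx'=a$), and $\theta^{-1}\colon Px'\to\alpha^*m$ is an arrow of $\cM_a$ issuing from $P_a(x')=Px'$, so it admits an opcartesian lifting $\sigma\colon x'\to x''$ at $x'$ with $P\sigma=\theta^{-1}$ and $Px''=\alpha^*m$. Because $\theta^{-1}$ is an isomorphism and opcartesian liftings of isomorphisms are again isomorphisms (the standard fact, proved exactly as the invertibility of $\iota$ in the proof of Proposition \ref{prop:opfib_in_Cat_A}), the arrow $\sigma$ is an isomorphism of $\cX_a$; in particular $\sigma$ is $F$-vertical.

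Finally I would set $\hat\alpha^x:=\tilde\alpha\cdot\sigma^{-1}\colon x''\to x$. Since $\sigma^{-1}$ is an $F$-vertical isomorphism, precomposing the $F$-cartesian arrow $\tilde\alpha$ with it yields again an $F$-cartesian arrow, and $F\hat\alpha^x=\alpha\cdot 1_a=\alpha$, so $\hat\alpha^x$ is indeed an $F$-cartesian lifting of $\alpha$ at $x$. Moreover $P\hat\alpha^x=P\tilde\alpha\cdot P\sigma^{-1}=P\tilde\alpha\cdot\theta=\hat\alpha^m$, which is the compatibility required.

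The main obstacle is precisely the middle step: an arbitrary $F$-cartesian lifting from the fibration $F$ projects under $P$ only to \emph{some} $G$-cartesian lifting, agreeing with the chosen $\hat\alpha^m$ merely up to the vertical isomorphism $\theta$, and the problem is to upgrade $\theta$ to an isomorphism in the total category $\cX$. This is exactly what the fiberwise opfibration structure buys us, through the observation that opcartesian liftings of isomorphisms are isomorphisms; the fibration axiom for $F$ and the cartesianness of $P$ are then used only for the routine bookkeeping of the outer two steps.
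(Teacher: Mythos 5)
Your proof is correct and follows essentially the same route as the paper's: take an arbitrary $F$-cartesian lifting, compare its $P$-image with $\hat\alpha^m$ via the unique $G$-vertical isomorphism, lift that isomorphism opcartesianly in the fiber $\cX_a$, and correct the original lifting by the (inverse of the) resulting vertical isomorphism. Your explicit remark that opcartesian liftings of isomorphisms are isomorphisms fills in a small detail the paper leaves implicit, but the argument is otherwise the same.
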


\begin{proof}
Take any $F$-cartesian lifting $\kappa$ of $\alpha$ at $x$. Then, $P(\kappa)$ is a cartesian lifting of $\alpha$ at $m$, and therefore there exists a $G$-vertical isomorphism $\tau$ such that $P(\kappa)=\hat\alpha^m\cdot\tau$. Now, since $P_{a}\colon \cX_a\to \cM_a$ is an opfibration, there exists a $F$-vertical opcartesian lifting of $\tau$ at the domain of $\kappa$. Denote $\hat\tau$ such a lifting, and define $\hat\alpha^x = \kappa\cdot\hat\tau^{-1}$. Of course $\hat\alpha^x$ is cartesian, since it is vertically isomorphic to the cartesian arrow $\kappa$. Moreover, $P(\hat\alpha^x)=P(\kappa\cdot\hat\tau^{-1})=P(\kappa)\cdot P(\hat\tau^{-1})=\hat\alpha^m\cdot\tau\cdot\tau^{-1}=\hat\alpha^m$.
\end{proof}

\medskip
The rest of this section is devoted to the proof of the following result, which is the generalization of Yoneda's Classification Theorem \ref{thm:yoneda}.

\begin{Theorem}[Non-symmetric Classification Theorem] \label{thm:ClassThm}
A fiberwise opfibration $P$ in $\Fib(\cA)$ with codomain a split fibration can be factorized as
\begin{equation}\label{diag:factorization_of_p}
\begin{aligned}
\xymatrix@C=8ex{
\cX\ar[r]^Q\ar[dr]_{F}
&\bar\cX\ar[d]^(.35){\bar F}\ar[r]^{\bar P}
&\cM\ar[dl]^{G}
\\
&\cA
}
\end{aligned}
\end{equation}
where $\bar P$ is a discrete opfibration in $\Fib(\cA)$ and any fiber $\bar\cX_m$ consists of the connected components of $\cX_m$.
\end{Theorem}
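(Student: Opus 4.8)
The plan is to construct the three objects $\bar\cX$, $Q$ and $\bar P$ explicitly and then verify that $\bar P$ is a discrete opfibration in $\Fib(\cA)$ by appealing to Corollary \ref{cor:disc_opfib_in_Fib_A}, which reduces the claim to showing that $\bar P$ is a \emph{fiberwise} discrete opfibration in $\Fib(\cA)$. The essential simplification is that, by Proposition \ref{prop:opfib_eq_globallyopfib} and Theorem \ref{thm:opfib_in_Fib_A}, passing to connected components of the $P$-fibers is exactly the operation that collapses the fiberwise opcartesian liftings to identities, so the quotient should turn the genuine opfibration structure into a discrete one.

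First I would define $\bar\cX$ fiber by fiber: for each object $m$ of $\cM$, set $\bar\cX_m$ to be the set of connected components $\pi_0(\cX_m)$ of the fiber of $P$ over $m$, writing $\bar x$ for the class of $x$. The projection $\bar F$ is forced, since all objects in a component of $\cX_m$ have the same $F$-image $Gm$. To assemble these into a category with a functor $\bar P\colon\bar\cX\to\cM$ I would declare an arrow $\bar x_1\to\bar x_2$ over a given $\mu\colon m_1\to m_2$ in $\cM$ to exist precisely when the two components are related by the transport structure coming from $P$; concretely, using Lemma \ref{lm:compatibility_of_P} to lift the cartesian part of $\mu$ compatibly and using the fiberwise opcartesian liftings $\hat\beta$ for the vertical part, every $\mu$ induces a well-defined map $\pi_0(\cX_{m_1})\to\pi_0(\cX_{m_2})$ on components. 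The functor $Q\colon\cX\to\bar\cX$ then sends $x\mapsto\bar x$ and $\xi\mapsto P(\xi)$ acting on classes, and the factorization $\bar P\cdot Q=P$ is immediate by construction.

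The substantive verifications are, in order: (1) that the assignment $\mu\mapsto(\pi_0(\cX_{m_1})\to\pi_0(\cX_{m_2}))$ is well defined on components and functorial — this is where I expect the main obstacle, because one must check that two different opcartesian/cartesian lifting choices land in the same component and that condition (C), available through Theorem \ref{thm:opfib_in_Fib_A}, guarantees the two ways of transporting along a composite (first cartesian then opcartesian versus the reverse) agree up to a vertical isomorphism and hence agree on $\pi_0$; (2) that $\bar F$ is a fibration, for which I would transport the split cleavage of $G$ through $Q$ and check cartesianness on components; and (3) that $\bar P$ is a fiberwise discrete opfibration, i.e.\ that each restriction $\bar P_a\colon\bar\cX_a\to\cM_a$ is a discrete opfibration. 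For the last point the key is that a $G$-vertical arrow $\beta\colon m\to m'$ has, for each $\bar x$ over $m$, exactly one lift to $\bar\cX_a$: existence comes from the fiberwise opcartesian lifting $\hat\beta$ of $P$, and uniqueness comes from collapsing to connected components, since any two opcartesian liftings are vertically isomorphic by Remark \ref{lm:all_vertical} and thus identified in $\pi_0$.

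Granting these, Corollary \ref{cor:disc_opfib_in_Fib_A} yields that $\bar P$ is a discrete opfibration in $\Fib(\cA)$, and since $\bar\cX_m=\pi_0(\cX_m)$ by definition, the statement about the fibers consisting of connected components holds on the nose. The place to be careful is the interaction between the cartesian ($F$, $G$) and opcartesian (fiberwise $P$) directions: I would isolate the well-definedness and functoriality of the induced maps on $\pi_0$ as a preliminary lemma, proving it by the same interchange argument encoded in condition (C), so that the remaining checks reduce to routine transport of the split cleavage and the discreteness count described above.
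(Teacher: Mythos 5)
Your overall strategy (build $\bar\cX$ from connected components of the $P$-fibers, then invoke Corollary \ref{cor:disc_opfib_in_Fib_A} to reduce discreteness of $\bar P$ in $\Fib(\cA)$ to fiberwise discreteness) is the paper's strategy, but two of your key steps do not go through as stated. First, the definition of the morphisms of $\bar\cX$: you claim that every $\mu\colon m_1\to m_2$ in $\cM$ induces a forward transport $\pi_0(\cX_{m_1})\to\pi_0(\cX_{m_2})$ and that an arrow $\bar x_1\to\bar x_2$ over $\mu$ exists precisely when $\bar x_2$ is the image of $\bar x_1$. No such forward transport exists: $P$ has opcartesian liftings only of $G$-vertical arrows, and the $F$-cartesian liftings supplied by Lemma \ref{lm:compatibility_of_P} go \emph{backwards} (they are liftings \emph{at} an object, i.e.\ with prescribed codomain). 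Writing $\mu=\hat\alpha\cdot\beta$ with $\beta$ vertical and $\hat\alpha$ cartesian over $\alpha=G\mu$, the vertical part pushes $\bar x_1$ forward to $\overline{\beta_*x_1}\in\pi_0(\cX_m)$ while the cartesian part pulls $\bar x_2$ \emph{back} to $\overline{\alpha^*x_2}\in\pi_0(\cX_m)$; the correct definition (Yoneda's, and the one used in Section \ref{sec:bar_X}) is that an arrow over $\mu$ exists iff $\overline{\beta_*x_1}=\overline{\alpha^*x_2}$. Your version would make $\bar P$ a discrete opfibration in $\CAT$ (a category of elements of a functor $\cM\to\Set$), which is both a different structure from the two-sided one actually obtained and in general impossible to construct, since there is nothing to transport an object of $\cX_{m}$ forward along a non-vertical cartesian arrow of $\cM$.

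Second, your appeal to condition (C) is not available under the hypotheses. Theorem \ref{thm:opfib_in_Fib_A} yields (C) only for \emph{opfibrations} in $\Fib(\cA)$, whereas the theorem you are proving assumes only a \emph{fiberwise} opfibration; the paper is explicit that (C) can fail in this generality (this is exactly the gap between regular spans and two-sided fibrations). Fortunately (C) is not needed: in the verification that composition in $\bar\cX$ is well defined, the canonical comparison $\omega\colon(\alpha^*\beta')_*\alpha^*x_2\to\alpha^*\beta'_*x_2$ is always $P$-vertical, and verticality alone (not invertibility) suffices for the two transports to agree after passing to connected components. So you should replace the invocation of (C) by the weaker observation that $\omega$ is $P$-vertical, and you will also need the splitness of $G$ at this point to identify $\overline{\alpha^*\alpha'^*x_3}$ with $\overline{(\alpha'\cdot\alpha)^*x_3}$. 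Your argument for fiberwise discreteness of $\bar P$ (existence from the fiberwise opcartesian liftings, uniqueness from the collapse on $\pi_0$ together with faithfulness of $\bar P$) is essentially the paper's Proposition \ref{prop:bar_P_opfib} and is fine once the morphisms of $\bar\cX$ are defined correctly.
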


We will construct $Q$ as a 2-categorical analog of a quotient of a suitable 2-kernel of $P$, but at the same time as part of the \emph{comprehensive} factorization of $P$ in the 2-category $\Fib(\cA)$.

\subsection{Description of $\bar{\mathcal X}$}\label{sec:bar_X}

The category $\bar\cX$ is defined as follows.
\begin{itemize}
\item Objects of $\bar\cX$ are equivalence classes $\bar x$ of objects of $\cX$ under the following equivalence relation: $x\sim x'$ if and only if there exists a zig-zag of $P$-vertical arrows connecting $x$ to $x'$.
\item A morphism $\mu\colon\bar x_1\to \bar x_2$ is determined by a morphism $\mu\colon Px_1\to Px_2$ of $\cM$, satisfying
$$\overline{\beta_*x_1}=\overline{\alpha^*x_2}$$
where:
\begin{itemize}
\item[(i)] $\alpha=\alpha_\mu=G\mu\,$;
\item[(ii)] $\beta=\beta_\mu$ is the unique morphism of $\cM$ such that $P\hat\alpha^{x_2}\cdot \beta=\mu$, with $\hat\alpha^{x_2}$ a cartesian lifting of $\alpha$ at $x_2$, and $G\beta=1\,$.
\end{itemize}
\end{itemize}
\begin{Remark}\label{rk:canonical_liftings}{\em
Notice that, as a consequence of Lemma \ref{lm:compatibility_of_P}, one can always fix a lifting $\hat\alpha^{Px_2}$ of $\alpha$ at $Px_2$ and then choose $\hat\alpha^{x_2}$ in such a way that $P\hat\alpha^{x_2}=\hat\alpha^{Px_2}$:
$$
\xymatrix@C=12ex{
x_1'\ar[r]^{\hat\alpha^{x_2}}&x_2&\cX
\\
Px_1\ar[dr]^{\mu}\ar@{-->}[d]_{\beta}
&&
\\
m\ar[r]_{P\hat\alpha^{x_2}=\hat\alpha^{Px_2}}
&Px_2&\cM
\\
Fx_1\ar[r]_{\alpha=G\mu}
&Fx_2&\cA
}
$$
We shall assume these choices throughout this section, whenever it will be necessary.  
We will denote both $\hat\alpha^{x_2}$ and $\hat\alpha^{Px_2}$ by $\hat\alpha$ when no confusion is likely to happen.
}\end{Remark}

Before we can describe the composition of morphisms, the next statement is in order.

\begin{Lemma}\label{lm:bar_X_morphs_well_defined}
Morphisms of $\bar\cX$ are well defined.
\end{Lemma}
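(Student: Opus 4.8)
The plan is to show that the defining condition $\overline{\beta_* x_1}=\overline{\alpha^* x_2}$ does not depend on any of the choices implicit in the definition of a morphism, namely: the representatives $x_1,x_2$ of the classes $\bar x_1,\bar x_2$; the opcartesian lifting used to form $\beta_* x_1$; and the cartesian lifting used to form $\alpha^* x_2$. Throughout I adopt the canonical choices of Remark \ref{rk:canonical_liftings}, so that $P\hat\alpha^{x_2}=\hat\alpha^{Px_2}$ is the split $G$-lifting of $\alpha=G\mu$. The first, basic observation is that a $P$-vertical arrow lies in a single $P$-fibre, so $x\sim x'$ forces $Px=Px'$ (and hence $Fx=Fx'$). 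Consequently $Px_i$ is constant on the class $\bar x_i$, and both $\alpha$ and $\beta=\beta_\mu$ are determined by $\mu$ alone, the latter because $\hat\alpha^{Px_2}$ depends only on $\alpha$ and $Px_2$.

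First I would dispose of the independence from the liftings. Two opcartesian liftings of $\beta$ at $x_1$ in the fibre $\cX_{Fx_1}$ differ by a $P_{Fx_1}$-vertical, hence $P$-vertical, isomorphism of their codomains, so $\overline{\beta_* x_1}$ is unambiguous. On the cartesian side, two $F$-cartesian liftings of $\alpha$ at $x_2$ subject to the constraint $P(-)=\hat\alpha^{Px_2}$ differ by an $F$-vertical isomorphism $\theta$ of their domains; applying $P$ and invoking cartesianness of $\hat\alpha^{Px_2}$ forces $P\theta=1$, so $\theta$ is $P$-vertical and $\overline{\alpha^* x_2}$ is unambiguous as well.

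Next I would establish independence from the representatives, reducing to a single generating $P$-vertical arrow and then propagating along zig-zags. Given $v\colon x_2\to x_2'$ with $Pv=1$, cartesianness of the chosen lifting $\hat\alpha^{x_2'}$ yields a unique $F$-vertical comparison $w\colon\alpha^* x_2\to\alpha^* x_2'$ with $\hat\alpha^{x_2'}\cdot w=v\cdot\hat\alpha^{x_2}$; applying $P$ and using $P\hat\alpha^{x_2'}=\hat\alpha^{Px_2'}=\hat\alpha^{Px_2}=P\hat\alpha^{x_2}$ together with cartesianness gives $Pw=1$, so $w$ is a $P$-vertical arrow witnessing $\overline{\alpha^* x_2}=\overline{\alpha^* x_2'}$. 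Dually, given $v\colon x_1\to x_1'$ with $Pv=1$, the universal property of the opcartesian lifting $\hat\beta$ of $\beta$ at $x_1$, applied to $\hat\beta'\cdot v$ (whose image under $P$ is $\beta$) with comparison datum $1_m$, produces $w\colon\beta_* x_1\to\beta_* x_1'$ with $Pw=1$ directly, whence $\overline{\beta_* x_1}=\overline{\beta_* x_1'}$. Combining these two facts, the condition $\overline{\beta_* x_1}=\overline{\alpha^* x_2}$ is stable under change of representatives, and the lemma follows.

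I expect the only delicate point to be the recurring need to upgrade the naturally occurring $F$-vertical comparisons to genuinely $P$-vertical ones, since the equivalence relation defining the objects of $\bar\cX$ is generated by $P$-vertical --- not merely $F$-vertical --- arrows. On the cartesian side this is exactly what the canonical choice $P\hat\alpha^{x_2}=\hat\alpha^{Px_2}$ of Remark \ref{rk:canonical_liftings}, resting on Lemma \ref{lm:compatibility_of_P}, is designed to secure; on the opcartesian side it is automatic from the universal property in the fibre.
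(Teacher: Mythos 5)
Your proposal is correct and follows essentially the same route as the paper: the paper likewise produces, for each $P$-vertical arrow in the zig-zags joining the representatives, a comparison on the $\beta_*$ side via the opcartesian property in the fibre and on the $\alpha^*$ side via cartesianness together with the canonical choice $P\hat\alpha^{x_2}=\hat\alpha^{Px_2}$ of Remark \ref{rk:canonical_liftings}, which is exactly the mechanism you use to upgrade $F$-vertical comparisons to $P$-vertical ones. Your additional check that the condition is independent of the choice of (op)cartesian liftings is a harmless supplement to the paper's argument, which fixes cleavages once and for all.
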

\begin{proof}
Let us consider a morphism $\mu\colon\bar x_1\to \bar x_2$ as above, and choose $y_i\in \bar x_i$, with $i=1,2$. We want to prove that
$$\overline{\beta_*y_1}=\overline{\alpha^*y_2}$$
We will use the following diagram.
\begin{equation} \label{diag:bar_X_morphs_well_defined}
\begin{aligned}
\xymatrix@C=4ex@R=4ex{
y_1\ar[rr]^{\hat\beta^{y_1}}&&\beta_*y_1
\\
\vdots\ar[u]^-{\omega_0'}&&\vdots\ar[u]_-{\gamma_0'}
\\
z\ar[u]^-{\omega_2'}\ar[d]_-{\omega_1'}\ar[rr]^{\hat\beta^{z}}
&&\ar[u]_-{\gamma_2'}\ar[d]^-{\gamma_1'}
\\
x_1\ar[rr]_-{\hat\beta^{x_1}}
&&\beta_*x_1
&\ar[l]_-{\omega_0}
\cdots&\ar[l]_-{\omega_2}\ar[r]^-{\omega_1}
&\alpha^*x_2\ar[rr]^{\hat\alpha^{x_2}}
&&x_2
\\
&&&&&\ar[u]^-{\gamma_1''}\ar[d]_-{\gamma_2''}\ar[rr]_{\hat\alpha^w}
&&w\ar[u]_-{\omega_1''}\ar[d]^-{\omega_2''}
\\
&&&&&\vdots\ar[d]_-{\gamma_0''}
&&\vdots\ar[d]^-{\omega_0''}
\\
&&&&&\alpha^*y_2\ar[rr]_{\hat\alpha^{y_2}}
&&y_2
}
\end{aligned}
\end{equation}
By hypothesis, we can assume that
\begin{itemize}
\item[(i)]  $\overline{\beta_*x_1}=\overline{\alpha^*x_2}$ is realized by the zig-zag of $\omega_i$'s connecting $\beta_*x_1$ with $\alpha^*x_2$ at the centre of the diagram, with $P\omega_i=1$;
\item[(ii)]  $y_1\in\bar x_1$ is realized by the zig-zag of $\omega_i'$'s connecting $x_1$ with $y_1$ at the left-hand side of the diagram, with $P\omega_i'=1$;
\item[(iii)]  $y_2\in\bar x_2$ is realized by the zig-zag of $\omega_i''$'s connecting $x_2$ with $y_2$ at the right-hand side of the diagram, with $P\omega_i''=1$.
\end{itemize}
For what concerns the left-hand side of the diagram, we can proceed in this way: since $G\beta=1$, we can consider the opcartesian lifting $\hat\beta^{z}$ of $\beta$ at $z$. Hence, since $\hat\beta^{z}$ is opcartesian over $\beta$, there exists a unique $\gamma_1'$ such that $\gamma_1'\cdot\hat\beta^{z}=\hat\beta^{x_1}\cdot \omega_1'$ and $P\gamma_1'=1$. Iterating the process, we obtain a zig-zag of $\gamma_i'$'s connecting $\beta_*x_1$ with $\beta_*y_1$, with $P\gamma_i'=1$, i.e.\ $\overline{\beta_*x_1}=\overline{\beta_*y_1}$.
For what concerns the right-hand side of the diagram, we can operate similarly: we consider the cartesian lifting $\hat\alpha^{w}$ of $\alpha$ at $w$. Hence, since $\hat\alpha^{x_2}$ is cartesian over $\alpha$, there exists a unique $\gamma_1''$ such that $\hat\alpha^{x_2}\cdot \gamma_1''=\omega_1''\cdot \hat\alpha^{w}$ and $F\gamma_1''=1$. Moreover, $P\omega_1''=1$ and, by Remark \ref{rk:canonical_liftings}, $P\hat\alpha^{x_2}=\hat\alpha^{Px_2}=P\hat\alpha^{w}$ is cartesian. Therefore $P\gamma_1''=1$ by uniqueness of the comparison between $P\hat\alpha^{x_2}$ and $P\hat\alpha^{w}$. Iterating the process, we obtain a zig-zag of $\gamma_i''$'s connecting $\alpha^*x_2$ with $\alpha^*y_2$, with $P\gamma_i''=1$, i.e.\ $\overline{\alpha^*x_2}=\overline{\alpha^*y_2}$.
In conclusion, we obtain the result:
$$
\overline{\beta_*y_1}=\overline{\beta_*x_1}=
\overline{\alpha^*x_2}=\overline{\alpha^*y_2}\,.
$$
\end{proof}

\begin{itemize}
\item Identities in $\bar\cX$. Given an object $\bar x$, its identity is the morphism
$$
\xymatrix{\bar x\ar[rr]^-{P1_x=1_{Px}}&&\bar x}\,.
$$
\item Composition in $\bar\cX$. Given a pair of composable morphisms
$$
\xymatrix{\bar x_1\ar[r]^-{\mu}&\bar x_2\ar[r]^-{\mu'}&\bar x_3}\,,
$$
their composition is the morphism
$$
\xymatrix{\bar x_1\ar[rr]^-{\mu'\cdot\mu}&&\bar x_3}\,.
$$
\end{itemize}

\begin{Lemma}
Composition in $\bar\cX$ is well defined.
\end{Lemma}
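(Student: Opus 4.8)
The plan is to reduce everything to a chain of equalities between connected components. I first note that any $P$-vertical arrow $x\to x'$ satisfies $Px=Px'$, so all objects of a class $\bar x$ share one image and a morphism $\mu\colon\bar x_1\to\bar x_2$ is simply an arrow $\mu\colon Px_1\to Px_2$ of $\cM$ obeying the defining condition. Since composition in $\bar\cX$ is by definition composition in $\cM$, and independence of the chosen representatives is already covered by Lemma \ref{lm:bar_X_morphs_well_defined}, the whole content is to show that, for composable $\mu\colon\bar x_1\to\bar x_2$ and $\mu'\colon\bar x_2\to\bar x_3$, the arrow $\mu'\cdot\mu$ again satisfies the morphism condition $\overline{(\beta_{\mu'\cdot\mu})_* x_1}=\overline{(\alpha_{\mu'\cdot\mu})^* x_3}$, where $\alpha_{\mu'\cdot\mu}=\alpha'\cdot\alpha$ with $\alpha=G\mu$ and $\alpha'=G\mu'$.

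The computational core is the identity $\beta_{\mu'\cdot\mu}=\alpha^*(\beta_{\mu'})\cdot\beta_\mu$ in $\cM$. To establish it I would exhibit $\alpha^*(\beta_{\mu'})\cdot\beta_\mu$ as a $G$-vertical arrow satisfying the defining equation of $\beta_{\mu'\cdot\mu}$, and then invoke the uniqueness clause in the definition of $\beta_{(-)}$. Concretely, using the compatible cleavage of Lemma \ref{lm:compatibility_of_P} (so that $P\widehat{\alpha'\alpha}^{x_3}=\widehat{\alpha'\alpha}^{Px_3}$), the splitness of $G$ in $\cM$ (so that $\widehat{\alpha'\alpha}^{Px_3}=\hat{\alpha'}^{Px_3}\cdot\hat\alpha^{\alpha'^*Px_3}$ and $(\alpha'\cdot\alpha)^*=\alpha^*\cdot\alpha'^*$), and the naturality square $\hat\alpha\cdot\alpha^*(\beta')=\beta'\cdot\hat\alpha$ of the cartesian cleavage, one computes $P\widehat{\alpha'\alpha}^{x_3}\cdot\bigl(\alpha^*(\beta')\cdot\beta\bigr)=\mu'\cdot\mu$. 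I expect this bookkeeping to be the most delicate part, being the one place where the splitness of $G$ and the compatibility of the two cleavages are genuinely needed.

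The conclusion then follows step by step. Because opcartesian liftings compose up to $P$-vertical isomorphism, $\overline{(\beta_{\mu'\cdot\mu})_* x_1}=\overline{(\alpha^*\beta')_*(\beta_* x_1)}$. Transporting the hypothesis $\overline{\beta_* x_1}=\overline{\alpha^* x_2}$ along the opcartesian lifting of $\alpha^*\beta'$—an operation that preserves $\sim$ by lifting $P$-vertical zig-zags exactly as on the left-hand side of the proof of Lemma \ref{lm:bar_X_morphs_well_defined}—gives $\overline{(\alpha^*\beta')_*(\beta_* x_1)}=\overline{(\alpha^*\beta')_*(\alpha^* x_2)}$. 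The canonical comparison $\omega$ of diagram (\ref{diag:C}), applied to $x_2$, $\alpha$ and $\beta'$, then yields $\overline{(\alpha^*\beta')_*(\alpha^* x_2)}=\overline{\alpha^*(\beta'_* x_2)}$. Finally, transporting the hypothesis $\overline{\beta'_* x_2}=\overline{\alpha'^* x_3}$ along $\alpha^*$—which respects $\sim$ because, under the compatible cleavage, $\alpha^*$ sends $P$-vertical arrows to $P$-vertical ones—followed by the $P$-vertical identification of the two liftings $\alpha^*\alpha'^* x_3$ and $(\alpha'\cdot\alpha)^* x_3$ (again from the splitness of $G$), produces $\overline{\alpha^*(\beta'_* x_2)}=\overline{(\alpha'\cdot\alpha)^* x_3}$, as required.

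The main conceptual point I would emphasise is the use of $\omega$: condition (C) is \emph{not} hypothesised here, so $\omega$ need not be invertible. What makes the argument go through is that $\omega$ is always a \emph{$P$-vertical} arrow, and $P$-verticality is exactly what identifies its domain and codomain in $\bar\cX$. Thus the interchange law (C) degenerates harmlessly upon passage to connected components, which is precisely why the non-symmetric Classification Theorem needs only a fiberwise opfibration with split $G$. The remaining, more routine, obstacle is checking the two stability properties of $\sim$—under opcartesian liftings and under the change-of-base functors $\alpha^*$—both of which reduce to the zig-zag transport already carried out in Lemma \ref{lm:bar_X_morphs_well_defined}.
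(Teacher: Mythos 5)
Your proposal is correct and follows essentially the same route as the paper: both identify $\beta_{\mu'\cdot\mu}$ with $\alpha^*(\beta_{\mu'})\cdot\beta_\mu$ via the splitness of $G$ and the compatible cleavage, both exploit the fact that the interchange comparison $\omega$ is $P$-vertical (so invertibility, i.e.\ condition (C), is not needed after passing to connected components), and both transport the two hypothesis equalities along opcartesian and cartesian liftings by the same zig-zag argument used for Lemma \ref{lm:bar_X_morphs_well_defined}. Your closing remark on why (C) degenerates harmlessly is exactly the point the paper is implicitly making.
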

\begin{proof}
We have to prove that $\mu'\cdot\mu$ gives a legitimate morphism $\bar x_1 \to \bar x_3$ in $\bar\cX$. Clearly $\alpha_{\mu'\cdot\mu}=\alpha_{\mu'}\cdot\alpha_{\mu}=\alpha'\cdot \alpha\,$. On the other hand, $\beta_{\mu'\cdot\mu}$ is precisely the vertical comparison arrow in the following diagram:
\begin{equation}\label{diag:triangle}
\begin{aligned}
\xymatrix@C=12ex{
Px_1\ar[dr]^{\mu}\ar[d]_{\beta}
\\
m_1\ar[r]^{\hat\alpha}\ar[d]_{\alpha^*\beta'}
&Px_2\ar[dr]^{\mu'}\ar[d]_{\beta'}
\\
m_1'\ar[r]_{\hat\alpha}
&m_2\ar[r]_{\hat\alpha'}
&Px_3
}
\end{aligned}
\end{equation}
Hence, we want to prove
$$
\overline{(\alpha^*(\beta')\cdot\beta)_*x_1}=\overline{(\alpha'\cdot\alpha)^*x_3}\,.
$$
Let us describe the situation by means of the following diagram in $\cX$
\begin{equation}\label{diag:composition_in_bar_X}
\xymatrix@C=3ex{
x_1\ar[r]^-{\hat\beta}
&\beta_*x_1
&\ar[l]_{\omega_0}\cdots\ar[r]^{\omega_1}
&\alpha^*x_2\ar[r]^-{\hat\alpha}
&x_2\ar[r]^-{\hat\beta'}
&\beta'_*x_2
&\ar[l]_{\omega'_1}\cdots\ar[r]^{\omega'_0}
&\alpha'^*x_3\ar[r]^-{\hat\alpha'}
&x_3}
\end{equation}
and analyze the composite $\hat\beta'\cdot\hat\alpha$.
Indeed we can lift to $\cX$ the rectangle in diagram (\ref{diag:triangle})  in this way: first we lift $\alpha$ and $\beta'$ at $x_2$, then we lift again $\alpha$ at $\beta'_*x_2$, and  $\alpha^*(\beta')$ at $\alpha^*x_2$. The comparison we obtain will be denoted by $\omega$, and obviously $P\omega=1$. The following diagram represents the process we have just described:
\begin{equation}\label{diag:alpha*beta}
\begin{aligned}
\xymatrix@C=12ex{
\alpha^*x_2\ar[r]^{\hat\alpha}\ar[d]_{\widehat{\alpha^*\beta'}}
&x_2\ar[dd]^{\hat\beta'}
\\
(\alpha^*\beta')_*\alpha^*x_2\ar[d]_{\omega}
\\
\alpha^*\beta'_*x_2\ar[r]_{\hat\alpha}
&\beta'_*x_2
}
\end{aligned}
\end{equation}
Substituting this factorization in diagram (\ref{diag:composition_in_bar_X}), we can treat the resulting diagram in a similar way as we did in the proof of Lemma \ref{lm:bar_X_morphs_well_defined}:
\begin{equation} \label{diag:bar_X_morphs_composiiton}
\begin{aligned}
\xymatrix@C=3.5ex@R=4ex{
x_1\ar[rr]^{\hat\beta}
&&\beta_*x_1\ar[rr]^-{\widehat{\alpha^*\beta'}}&&(\alpha^*\beta')_*\beta_*x_1
\\
&&\vdots\ar[u]^-{\omega_0}&&\vdots\ar[u]_-{\gamma_0}
\\
&&z\ar[u]^-{\omega_2}\ar[d]_-{\omega_1}\ar[rr]^{\widehat{\alpha^*\beta'}}
&&\ar[u]_-{\gamma_2}\ar[d]^-{\gamma_1}
\\
&&\alpha^*x_2\ar[rr]_-{\widehat{\alpha^*\beta'}}
&&(\alpha^*\beta')_*\alpha^*x_2\ar[d]^{\omega}
\\
&&
&
&\alpha^*\beta'_*x_2\ar[rr]^{\hat\alpha}
&&\beta'_*x_2
\\
&&&&\ar[u]^-{\gamma_1'}\ar[d]_-{\gamma_2'}\ar[rr]_{\hat\alpha}
&&w\ar[u]_-{\omega_1'}\ar[d]^-{\omega_2'}
\\
&&&&\vdots\ar[d]_-{\gamma_0'}
&&\vdots\ar[d]^-{\omega_0'}
\\
&&&&\alpha^*\alpha'^*x_3 \ar[rr]_{\hat\alpha}
&&\alpha'^*x_3\ar[rr]_{\hat\alpha'}
&&x_3
}
\end{aligned}
\end{equation}
and show that $\overline{(\alpha^*\beta')_*\beta_*x_1}=\overline{\alpha^*\alpha'^*x_3}$.
As far as the right-hand side term of the equation is concerned, since the fibration $G$ is split, one can always choose the liftings of $\alpha$ and  $\alpha'$ in such a way  that the canonical comparison $\alpha^*\alpha'^*x_3\to(\alpha'\cdot\alpha)^*x_3$ is $P$-vertical, hence $\overline{\alpha^*\alpha'^*x_3}=\overline{(\alpha'\cdot\alpha)^*x_3}$. On the other hand, although it is not possible to proceed in the same way for the left-hand side of the equation, one still gets a $P$-vertical comparison
$$
\xymatrix@C=12ex{(\alpha^*\beta'\cdot\beta)_*x_1\ar[r]^{\omega'}&(\alpha^*\beta')_*\beta_*x_1}
$$
so that $\overline{(\alpha^*\beta'\cdot\beta)_*x_1}=\overline{(\alpha^*\beta')_*\beta_*x_1}$\,. By assembling together the equations
just obtained, we conclude our proof.
\end{proof}

\subsection{The functors $Q$, $\bar F$ and $\bar P$}

Let us first define the functors $Q$ and $\bar F$, and afterwards we will show that $\bar F$ is a fibration and that $Q$ is a cartesian functor, i.e., the following diagram is a morphism in $\Fib(\cA)$:
$$
\xymatrix{
\cX\ar[dr]_{F}\ar[rr]^{Q}&&\bar\cX\ar[dl]^{\bar F}\\&\cA}
$$

The functor $Q\colon \cX\to \bar\cX$ is defined as follows:
$$
Q(\xymatrix{x_1\ar[r]^{\xi}&x_2})=\xymatrix{\bar x_1\ar[r]^{P\xi}&\bar x_2}\,.
$$
This definition is consistent. Actually, in order to prove this statement, it suffices to compute the following factorization of $\xi$:
\begin{equation}\label{diag:standard_factorization}
\begin{aligned}
\xymatrix{
x_1\ar@/^3ex/[drr]^{\xi} \ar@{-->}[dr]^{\nu}\ar[d]_{\widehat{P\nu}}
\\
\ar@{-->}[r]_{\omega}
&\ar[r]_{\widehat{F\xi}}
&x_2
}
\end{aligned}
\end{equation}
first factor $\xi$ through a cartesian lifting at $x_2$ of $\alpha_{P\xi}=F\xi$ in order to get an $F$-vertical arrow $\nu$, then factor $\nu$ through an opcartesian lifting at $x_1$ of $\beta_{P\xi}=P\nu$ in order to get a $P$-vertical arrow $\omega$, as in the diagram above.  Therefore it is clear that:
$$
\overline{(\beta_{P\xi})_*x_1}=\overline{(P\nu)_*x_1}=\overline{(F\xi)^*x_2}=\overline{(\alpha_{P\xi})^*x_2}
$$

The functor $\bar F\colon \bar\cX\to\cA$ is defined as follows:
$$
\bar F(\xymatrix{\bar x_1\ar[r]^{\mu}&\bar x_2})=\xymatrix{Fx_1\ar[r]^{G\mu}&Fx_2}\,.
$$
The following statement was expected.
\begin{Proposition}\label{prop:bar_F_fibration}
$(\bar \cX,\bar F)$ is an object of $\Fib(\cA)$.
\end{Proposition}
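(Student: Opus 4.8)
The plan is to show that $\bar F$ is a fibration by exhibiting, for every object $\bar x_2$ of $\bar\cX$ and every arrow $\alpha\colon a\to\bar F(\bar x_2)=Fx_2$ in $\cA$, a $\bar F$-cartesian lifting, obtained by transporting the cartesian structure of $F$ across $P$. Concretely, I would choose an $F$-cartesian lifting $\hat\alpha^{x_2}\colon\alpha^*x_2\to x_2$ of $\alpha$ at $x_2$, selected compatibly with the split cleavage of $G$ as in Lemma \ref{lm:compatibility_of_P} and Remark \ref{rk:canonical_liftings}, so that $P\hat\alpha^{x_2}=\hat\alpha^{Px_2}$, and declare the candidate lifting to be the morphism $\bar\mu\colon\overline{\alpha^*x_2}\to\bar x_2$ determined by $\mu=\hat\alpha^{Px_2}\colon\alpha^*(Px_2)\to Px_2$ in $\cM$. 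Since the vertical part of $\mu$ in the sense of the definition of $\bar\cX$ is then the identity and $\overline{1_*(\alpha^*x_2)}=\overline{\alpha^*x_2}$, this is a legitimate morphism, and $\bar F(\bar\mu)=G\mu=\alpha$ as required.

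It then remains to verify that $\bar\mu$ is $\bar F$-cartesian. I would take an arbitrary morphism $\bar\nu\colon\bar x_0\to\bar x_2$ and an arrow $\psi\colon Fx_0\to a$ of $\cA$ with $\bar F(\bar\nu)=G\nu=\alpha\cdot\psi=\bar F(\bar\mu)\cdot\psi$, and produce the unique comparison $\bar h\colon\bar x_0\to\overline{\alpha^*x_2}$ with $\bar\mu\cdot\bar h=\bar\nu$ and $\bar F(\bar h)=\psi$. Because composition and identities in $\bar\cX$ are inherited from $\cM$ and the object $Px$ is well defined on each $P$-vertical equivalence class, the candidate is forced: as $\mu=\hat\alpha^{Px_2}$ is $G$-cartesian and $G\nu=\alpha\cdot\psi$, there is a unique $h\colon Px_0\to\alpha^*(Px_2)$ in $\cM$ with $\mu\cdot h=\nu$ and $Gh=\psi$, and uniqueness of $\bar h$ in $\bar\cX$ follows immediately from uniqueness of $h$ in $\cM$.

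The substantive point, and the step I expect to be the main obstacle, is to check that this $h$ really defines a morphism of $\bar\cX$, i.e.\ that it satisfies $\overline{(\beta_h)_*x_0}=\overline{\psi^*(\alpha^*x_2)}$. Writing $h=\hat\psi^{\alpha^*(Px_2)}\cdot\beta_h$ with $G\beta_h=1$, the relation $\mu\cdot h=\nu$ reads $\hat\alpha^{Px_2}\cdot\hat\psi^{\alpha^*(Px_2)}\cdot\beta_h=\nu$; since $G$ is \emph{split}, the composite of chosen cartesian liftings $\hat\alpha^{Px_2}\cdot\hat\psi^{\alpha^*(Px_2)}$ is the chosen lifting $\widehat{\alpha\psi}^{\,Px_2}$, so comparing with the canonical factorization $\nu=\widehat{\alpha\psi}^{\,Px_2}\cdot\beta_\nu$ and cancelling the common cartesian factor gives $\beta_h=\beta_\nu$. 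The condition for $h$ thus becomes $\overline{(\beta_\nu)_*x_0}=\overline{\psi^*(\alpha^*x_2)}$, whose left-hand side equals $\overline{(\alpha\psi)^*x_2}$ because $\bar\nu$ is a morphism. Finally I would identify $\overline{\psi^*(\alpha^*x_2)}=\overline{(\alpha\psi)^*x_2}$: both objects are domains of $F$-cartesian liftings of $\alpha\psi$ at $x_2$, hence joined by an $F$-vertical comparison isomorphism $\theta$; applying $P$ and using $P\hat\alpha^{x_2}=\hat\alpha^{Px_2}$, $P\hat\psi^{\alpha^*x_2}=\hat\psi^{\alpha^*(Px_2)}$ together with splitness of $G$, one sees that $P\theta$ is a $G$-vertical self-comparison of the cartesian arrow $\widehat{\alpha\psi}^{\,Px_2}$, forcing $P\theta=1$, so that $\theta$ is $P$-vertical and the two classes coincide. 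This is exactly the $P$-verticality-of-comparisons technique already exploited in the proofs that morphisms and composition in $\bar\cX$ are well defined, which is why I expect the argument to close; the only delicate bookkeeping is to keep all liftings chosen through the compatible, split cleavages so that every comparison lands in the $P$-vertical class.
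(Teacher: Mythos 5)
Your proposal is correct and follows essentially the same route as the paper: the candidate lifting is the image under $P$ of a chosen $F$-cartesian lifting of $\alpha$ at $x_2$ (compatible with the split cleavage of $G$), the comparison is produced by $G$-cartesianness of $P\hat\alpha$, and its legitimacy as a morphism of $\bar\cX$ is checked by the same chain of equalities $\overline{(\beta_\nu)_*x_0}=\overline{(\alpha\psi)^*x_2}=\overline{\psi^*(\alpha^*x_2)}$ resting on splitness of $G$. You merely spell out more explicitly the $P$-verticality of the comparison between $\psi^*(\alpha^*x_2)$ and $(\alpha\psi)^*x_2$, which the paper compresses into the remark that $G$ is split.
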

\begin{proof}
Given an object $\bar x_2$ of $\bar\cX$, and an arrow $\xymatrix{ a_1\ar[r]^-{\alpha} &\bar F\bar x_2=Fx_2}$, first lift $\alpha$ at $x_2$, and then obtain from this lifting $\hat\alpha\colon x_1\to x_2$ the morphism of $\bar\cX$
$$
Q(\hat\alpha) = \xymatrix@C=12ex{\bar x_1\ar[r]^{P\hat\alpha}&\bar x_2}\,.
$$
Let us prove that such a morphism is cartesian.

Suppose we are given a morphism $\mu'\colon\bar x_0\to\bar x_2$ of $\bar \cX$ such that  $\bar F\mu'=\alpha\cdot \alpha''$ in $\cA$:
$$
\xymatrix@C=13ex{
\bar x_0\ar@/^2ex/[drr]^{\mu'}
\\
&\bar x_1\ar[r]_{P\hat\alpha}
&\bar x_2&\bar\cX
\\
Px_0\ar@/^2ex/[drr]^{\mu'}\ar@{-->}[dr]_{\mu''}
\\
&Px_1\ar[r]_{P\hat\alpha}&Px_2&\cM
\\
Fx_0\ar[r]_{\alpha''}
&Fx_1\ar[r]_-{\alpha}
&Fx_2&\cA
}
$$
Since $P\hat\alpha$ is $G$-cartesian, there exists a unique $\mu''$ such that $P\hat\alpha\cdot\mu''=\mu'$ and $G\mu''=\alpha''$. This is the arrow $\bar x_0\to\bar x_1$ we are looking for, provided we show that it is a legitimate arrow of $\bar\cX$. This follows from the calculation below, right after the explicative diagram:
$$
\xymatrix{
Px_0\ar@/^3ex/[drr]^{\mu'}\ar[dr]^{\mu''}\ar[d]_{\beta_{\mu''}=\beta_{\mu'}}
\\
\ar[r]_-{P\hat\alpha''}
&Px_1\ar[r]_-{P\hat\alpha}&Px_2&\cM
\\
Fx_0\ar[r]_{\alpha''}
&Fx_1\ar[r]_{\alpha}
&Fx_2&\cA
}
$$
$$
\overline{(\alpha_{\mu''})^*x_1}
=\overline{\alpha''^*x_1}
=\overline{(\alpha \cdot\alpha'')^*x_2}
=\overline{(\alpha_{\mu'})^*x_2}
=\overline{(\beta_{\mu'})_*x_0}
=\overline{(\beta_{\mu''})_*x_0}
$$
where we used the fact that the fibration $G$ is split.
\end{proof}

\begin{Proposition}
$Q\colon (\cX,F)\to (\bar \cX,\bar F)$ is an arrow of $\Fib(\cA)$.
\end{Proposition}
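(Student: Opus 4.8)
The plan is to unwind the definition of a morphism in $\Fib(\cA)$: such a morphism is a cartesian functor over $\cA$, so I must check two things, namely that $Q$ is a $1$-cell of $\CAT/\cA$ (i.e.\ $\bar F\cdot Q=F$) and that $Q$ is cartesian (i.e.\ it sends $F$-cartesian arrows to $\bar F$-cartesian arrows).

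First I would dispose of the part over $\cA$. Given an arrow $\xi\colon x_1\to x_2$ of $\cX$, by definition $Q(\xi)=P\xi$ regarded as a morphism $\bar x_1\to\bar x_2$, and hence
$$
\bar F(Q(\xi))=G(P\xi)=(G\cdot P)(\xi)=F(\xi)\,,
$$
using $G\cdot P=F$. Thus $\bar F\cdot Q=F$. Functoriality of $Q$ itself is immediate, since identities and composition in $\bar\cX$ are inherited from $\cM$ and $P$ is a functor, while the consistency of the assignment on morphisms has already been recorded through the standard factorization \eqref{diag:standard_factorization}.

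The substance is the preservation of cartesian arrows, and here the work has essentially been done already in Proposition \ref{prop:bar_F_fibration}. Indeed, the cleanest route is to note that any $F$-cartesian arrow $\xi\colon x_1\to x_2$ over $\alpha=F\xi$ factors as $\xi=\hat\alpha\cdot\theta$, where $\hat\alpha$ is the chosen cartesian lifting of $\alpha$ at $x_2$ (so that the conventions of Remark \ref{rk:canonical_liftings} apply) and $\theta$ is an $F$-vertical isomorphism, since cartesian liftings of a fixed arrow at a fixed object are unique up to vertical isomorphism. The proof of Proposition \ref{prop:bar_F_fibration} shows precisely that $Q(\hat\alpha)=P\hat\alpha$ is $\bar F$-cartesian in $\bar\cX$; the only input there is that $P\hat\alpha$ is $G$-cartesian, which holds because $P$ is a cartesian functor. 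Meanwhile $Q(\theta)$ is an isomorphism, because $Q$ is a functor and $\theta$ is invertible.

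It then remains only to assemble these two facts: since isomorphisms are cartesian and cartesian arrows are closed under composition, the composite
$$
Q(\xi)=Q(\hat\alpha)\cdot Q(\theta)
$$
is $\bar F$-cartesian. I do not expect a genuine obstacle here; the one point to be careful about is that the universal property of $Q(\hat\alpha)$ need not be re-verified, as Proposition \ref{prop:bar_F_fibration} already produces a cartesian lifting exactly as the $Q$-image of a cartesian lifting in $\cX$, so that reducing the general cartesian $\xi$ to a chosen lifting composed with a vertical isomorphism is all that is required.
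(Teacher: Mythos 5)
Your proof is correct and follows essentially the same route as the paper: the paper deduces the statement as an immediate corollary of the characterization of $\bar F$-cartesian arrows in Proposition \ref{prop:characterization_cartesian_in_bar_X}, whose sufficiency direction is exactly the computation from Proposition \ref{prop:bar_F_fibration} that you invoke. Your reduction of an arbitrary $F$-cartesian arrow to a chosen cartesian lifting composed with an $F$-vertical isomorphism is a harmless substitute for citing that characterization directly, since both arguments ultimately rest on $P$ being a cartesian functor.
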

\begin{proof}
It is an obvious corollary to the following characterization.
\end{proof}

\begin{Proposition}\label{prop:characterization_cartesian_in_bar_X}
The morphism  $\xymatrix{\bar x_1\ar[r]^{\mu}&\bar x_2}$
is $\bar F$-cartesian if and only if the arrow $\xymatrix{Px_1\ar[r]^{\mu}&Px_2}$
is $G$-cartesian.
\end{Proposition}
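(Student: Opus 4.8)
The plan is to reduce the statement to a single assertion about invertibility in $\bar\cX$ and then settle that assertion using the explicit description of $\bar\cX$. Throughout I will use that a morphism of $\bar\cX$ \emph{is} a morphism of $\cM$ between the objects $Px_1$ and $Px_2$ (these are well defined on classes, since $P$-vertical arrows are sent to identities), and that composition, identities and the action of $\bar F$ on $\bar\cX$-morphisms are computed as in $\cM$ (respectively, by applying $G$). In particular, an $\bar\cX$-morphism is invertible in $\bar\cX$ if and only if its underlying $\cM$-morphism has a two-sided inverse that is \emph{itself} a legitimate morphism of $\bar\cX$.

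First I would set $\alpha:=G\mu$, fix a $G$-cartesian lifting $\hat\alpha$ of $\alpha$ at $Px_2$ and, by Remark \ref{rk:canonical_liftings}, an $F$-cartesian lifting $\hat\alpha^{x_2}$ with $P\hat\alpha^{x_2}=\hat\alpha$. By Proposition \ref{prop:bar_F_fibration} the morphism $Q\hat\alpha^{x_2}\colon\overline{\alpha^*x_2}\to\bar x_2$ is an $\bar F$-cartesian lifting of $\alpha$, whose underlying $\cM$-morphism is $\hat\alpha$. Let $\beta=\beta_\mu$ be the $G$-vertical comparison with $\hat\alpha\cdot\beta=\mu$. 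Since $\mu$ is legitimate, its defining condition $\overline{\beta_*x_1}=\overline{\alpha^*x_2}$ is exactly what makes $\beta$ define an $\bar F$-vertical morphism $\bar\beta\colon\bar x_1\to\overline{\alpha^*x_2}$ of $\bar\cX$ with $Q\hat\alpha^{x_2}\cdot\bar\beta=\mu$. Now I invoke the elementary fact that, in any factorization $\mu=c\cdot v$ with $c$ cartesian over the base arrow of $\mu$ and $v$ vertical, $\mu$ is cartesian iff $v$ is invertible. Applying this once in the fibration $\bar F$ (with $c=Q\hat\alpha^{x_2}$, $v=\bar\beta$) and once in the fibration $G$ (with $c=\hat\alpha$, $v=\beta$) reduces the proposition to the single claim: \emph{$\bar\beta$ is invertible in $\bar\cX$ iff $\beta$ is invertible in $\cM$.}

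One direction is formal: if $\bar\beta$ has an inverse in $\bar\cX$, passing to underlying $\cM$-morphisms (where $\bar\cX$-composition and identities coincide with those of $\cM$) produces a two-sided inverse of $\beta$. The hard part, and the main obstacle, is the converse: assuming $\beta$ is a $G$-vertical isomorphism of $\cM$, I must check that $\beta^{-1}$ is again a \emph{legitimate} morphism of $\bar\cX$, i.e.\ that $\overline{(\beta^{-1})_*(\alpha^*x_2)}=\bar x_1$; granting this, $\overline{\beta^{-1}}$ inverts $\bar\beta$. To prove it I take the opcartesian lifting $\hat\beta\colon x_1\to\beta_*x_1$ of $\beta$ in the fiber $\cX_{Fx_1}$; as $\beta$ is invertible this lifting is an isomorphism, so its inverse is an opcartesian lifting of $\beta^{-1}$ at $\beta_*x_1$, whence $\overline{(\beta^{-1})_*(\beta_*x_1)}=\bar x_1$. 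Finally, the legitimacy of $\mu$ supplies a $P$-vertical zig-zag between $\beta_*x_1$ and $\alpha^*x_2$, and the left-hand computation in the proof of Lemma \ref{lm:bar_X_morphs_well_defined} — transporting the opcartesian lifting of the fixed $G$-vertical arrow $\beta^{-1}$ along such a zig-zag — shows that $\overline{(\beta^{-1})_*(-)}$ is constant on $P$-vertical classes, so $\overline{(\beta^{-1})_*(\alpha^*x_2)}=\overline{(\beta^{-1})_*(\beta_*x_1)}=\bar x_1$, as required. This closes the argument.
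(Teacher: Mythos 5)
Your proof is correct. For the ``only if'' direction it is essentially the paper's argument in different clothing: the paper likewise factors $\mu$ through a cartesian lifting, uses the $\bar F$-cartesianness of $\mu$ to produce a vertical $\sigma$ in $\bar\cX$, and checks that $\sigma$ and the vertical comparison $\tau=\beta_\mu$ are mutually inverse by playing the two universal properties against each other --- which is exactly your ``factorization lemma in $\bar F$, pass to underlying $\cM$-morphisms, factorization lemma in $G$''. The genuine divergence is in the ``if'' direction. The paper disposes of it with ``follows easily from the proof of Proposition \ref{prop:bar_F_fibration}'', i.e.\ by rerunning that argument with $\mu$ in place of $P\hat\alpha$; you instead isolate what that deferral actually hides --- namely that the $G$-vertical isomorphism $\beta^{-1}$ is again a \emph{legitimate} morphism of $\bar\cX$ --- and prove it by observing that a fiberwise opcartesian lifting of an isomorphism is an isomorphism whose inverse is opcartesian over $\beta^{-1}$, and then transporting along the $P$-vertical zig-zag exactly as in the left-hand half of the proof of Lemma \ref{lm:bar_X_morphs_well_defined}. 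This is longer but more self-contained, and it makes explicit a verification the paper leaves implicit. Two small caveats: your reduction needs $Q\hat\alpha^{x_2}$ to be $\bar F$-cartesian, which comes from the specific liftings constructed in the proof (not just the statement) of Proposition \ref{prop:bar_F_fibration}, so you have not fully decoupled from that proposition either; and when you quote ``$\mu$ is cartesian iff $v$ is invertible'' you should note that the ``only if'' half of that lemma requires both $c$ and $\mu$ to be cartesian over the same base arrow, which is satisfied here but is worth saying.
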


\begin{proof}
The sufficient condition follows easily from the proof of Proposition \ref{prop:bar_F_fibration}. As far as the necessary condition is concerned, let us be given an $\bar F$-cartesian arrow $\xymatrix{\bar x_1\ar[r]^{\mu}&\bar x_2}$, and consider the cartesian lifting of $G\mu$ at $x_2$. Of course, $P$ sends such a lifting to a $G$-cartesian arrow $P(\widehat{G\mu})$. Therefore, there exists a unique $\tau\colon Px_1\to Px_1'$ in $\cM$ such that $P(\widehat{G\mu})\cdot\tau=\mu$ and $G\tau=1$. It is easy to see that such a $\tau$ defines a morphism $\bar x_1\to\bar x_1'$ in $\bar\cX$. On the other hand, since $\mu$ is $\bar F$-cartesian, there exists a unique $\sigma\colon\bar x_1'\to \bar x_1$ such that $\mu\cdot\sigma=P(\widehat{G\mu})$ in $\bar\cX$ and $\bar F\sigma=1$.
Therefore, since $P(\widehat{G\mu})$ is $G$-cartesian, we deduce $\tau\cdot\sigma=1_{Px'_1}$, and since $\mu$ is $\bar F$-cartesian, $\sigma\cdot\tau=1_{\bar x_1}$, which in turn implies $\sigma\cdot\tau=1_{Px_1}$. In conclusion, $\tau$ is a $G$-vertical isomorphism, and hence $\mu\colon Px_1\to Px_2$ is $G$-cartesian.
\end{proof}


Let us define the functor $\bar P\colon \bar\cX\to \cM$ as follows:
$$
\bar P(\xymatrix{\bar x_1\ar[r]^{\mu}&\bar x_2})=\xymatrix{Px_1\ar[r]^{\mu}&Px_2}\,.
$$
By Proposition \ref{prop:characterization_cartesian_in_bar_X}, it is clearly cartesian, and an easy computation shows $\bar P	\cdot Q=P$. Moreover, the following statement holds.

\begin{Proposition}\label{prop:bar_P_opfib}
$\bar P\colon (\bar\cX,\bar F)\to(\cM,G)$ is a discrete opfibration in $\Fib(\cA)$.
\end{Proposition}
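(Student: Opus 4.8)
The plan is to verify that $\bar P$ satisfies the two conditions characterizing a discrete opfibration in $\Fib(\cA)$. By Corollary \ref{cor:disc_opfib_in_Fib_A}, it suffices to prove that $\bar P$ is a \emph{fiberwise discrete opfibration} in $\Fib(\cA)$; that is, for each object $a$ of $\cA$, the restriction $\bar P_a\colon\bar\cX_a\to\cM_a$ is a discrete opfibration, where I must first confirm that $\bar P$ is a genuine morphism of $\Fib(\cA)$ (a cartesian functor over $\cA$). The latter is essentially already in hand: we have $G\cdot\bar P=\bar F$ by the very definitions of $\bar F$ and $\bar P$ on morphisms (both send $\mu$ to $G\mu$ and to $\mu$ respectively), and $\bar P$ preserves cartesian arrows by Proposition \ref{prop:characterization_cartesian_in_bar_X}, which says precisely that $\mu$ is $\bar F$-cartesian iff $\mu=\bar P\mu$ is $G$-cartesian. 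So the content of the proof lies in the fiberwise discreteness.

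The key step is to analyze, for a fixed object $a$, the fiber functor $\bar P_a$. An object of $\bar\cX_a$ is a class $\bar x$ with $Fx=a$, and a morphism $\mu\colon\bar x_1\to\bar x_2$ lying in the fiber has $\bar F\mu=G\mu=1_a$, hence $\alpha_\mu=1_a$ and $\beta_\mu=\mu$. Unwinding the well-definedness condition from the description of $\bar\cX$ (Section \ref{sec:bar_X}) with $\alpha=1$, a vertical morphism $\mu\colon\bar x_1\to\bar x_2$ is thus a $G$-vertical arrow $\mu\colon Px_1\to Px_2$ in $\cM_a$ such that $\overline{\mu_*x_1}=\bar x_2$, where $\mu_*x_1$ is the codomain of the opcartesian lifting of $\mu$ at $x_1$ in the fiber $\cX_a$. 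I would then check that $\bar P_a$ sends such a $\mu$ to itself as an arrow of $\cM_a$, so the task reduces to showing that, given $\bar x_1\in\bar\cX_a$ and \emph{any} arrow $\mu\colon Px_1\to m$ in $\cM_a$, there is a \emph{unique} lifting $\bar\mu\colon\bar x_1\to\bar x_2$ in $\bar\cX_a$ with $\bar P_a\bar\mu=\mu$ (uniqueness of the lifting being exactly discreteness).

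For existence, I would take an opcartesian lifting $\hat\mu\colon x_1\to\mu_*x_1$ of $\mu$ in the fiber $\cX_a$ (which exists since $P$ is a fiberwise opfibration), set $\bar x_2:=\overline{\mu_*x_1}$, and observe that $\mu$ itself defines a morphism $\bar x_1\to\bar x_2$ in $\bar\cX_a$ precisely because $\overline{\mu_*x_1}=\bar x_2$ by construction; its image under $\bar P_a$ is $\mu$. For uniqueness, suppose $\bar\mu'\colon\bar x_1\to\bar x_2'$ is another morphism of $\bar\cX_a$ with $\bar P_a\bar\mu'=\mu$. Since $\bar P_a$ is the identity on underlying arrows, $\bar\mu'$ is represented by the same arrow $\mu$ of $\cM_a$, and the well-definedness condition forces $\bar x_2'=\overline{\mu_*x_1}=\bar x_2$; hence $\bar\mu'=\bar\mu$ as morphisms of $\bar\cX_a$, both being the class of $\mu$ with identical source and target.

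The point I expect to need the most care is the bookkeeping of \emph{which} equivalence relation the condition $\overline{\mu_*x_1}=\bar x_2$ refers to and the independence of $\bar x_2$ from the choice of representative $x_1\in\bar x_1$ and of opcartesian lifting $\hat\mu$ — but this is exactly what Lemma \ref{lm:bar_X_morphs_well_defined} guarantees, so I would simply invoke it rather than re-prove it. Once discreteness of each $\bar P_a$ is established, Corollary \ref{cor:disc_opfib_in_Fib_A} upgrades it to the statement that $\bar P$ is a discrete opfibration in the $2$-category $\Fib(\cA)$, completing the proof.
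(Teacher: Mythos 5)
Your proposal is correct and follows essentially the same route as the paper: reduce via Corollary \ref{cor:disc_opfib_in_Fib_A} to fiberwise discreteness, obtain the lifting from an opcartesian lifting $\hat\mu$ in the fiber $\cX_a$ (i.e.\ the lifting is $Q(\hat\mu)$), and deduce uniqueness from the definition of morphisms in $\bar\cX$. If anything, you are slightly more explicit than the paper on uniqueness: the paper just says ``$\bar P$ is faithful by definition,'' whereas you also note that the defining condition $\overline{\mu_*x_1}=\bar x_2$ pins down the target object, which is the part faithfulness alone does not cover.
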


\begin{proof}
Recall that, thanks to Corollary \ref{cor:disc_opfib_in_Fib_A}, this amounts to prove that, for every object $a$ of $\cA$, the restriction of $\bar P$ to the fiber over $a$
$$
\bar P_a\colon \xymatrix{\bar\cX_a\ar[r]&\cM_a}
$$
is a discrete opfibration. In fact, from our general hypotheses on $P$, we know that the restriction of $P$ to the fiber over $a$
$$
P_a\colon \xymatrix{\cX_a\ar[r]&\cM_a}
$$
is an opfibration, therefore, given an object $\bar x_1$ of $\bar \cX_a$, and a morphism $\mu\colon \bar P\bar x_1 \to m_2$ in $\cM_a$, there exists an opcartesian lifting $\hat\mu\colon x_1\to x_2$ of $\mu$ at $x_1$. Hence
$$
Q(\hat\mu)=\xymatrix{\bar x_1\ar[r]^{\mu}&\bar x_2}
$$
lifts $\mu$ at $\bar x_1$. In fact, such a lifting is unique since $\bar P$ is faithful by definition.
\end{proof}

\subsection{The comprehensive factorization of $P$ in $\Fib(\cA)$}

By Proposition \ref{prop:bar_P_opfib}, $\bar P$ is a discrete opfibration in the 2-category $\Fib(\cA)$. Therefore, in order to prove that $(Q,\bar P)$ is the comprehensive factorization of $P$, we have to prove the following theorem.

\begin{Theorem} \label{thm:Q_initial}
The functor $Q$ is initial in $\Fib(\cA)$.
\end{Theorem}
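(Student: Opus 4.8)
The plan is to prove that $Q$ is initial by verifying that it is left orthogonal, in the 2-category $\Fib(\cA)$, to every discrete opfibration. Concretely, given a discrete opfibration $D\colon(\cE,F_\cE)\to(\cB,G_\cB)$ in $\Fib(\cA)$ and a commuting square $DU=VQ$ with $U\colon\cX\to\cE$ and $V\colon\bar\cX\to\cB$ cartesian functors over $\cA$, I would construct a unique diagonal $W\colon\bar\cX\to\cE$ in $\Fib(\cA)$ with $WQ=U$ and $DW=V$; the 2-dimensional part of the orthogonality will follow from the same rigidity used below. Since $\bar P$ is already a discrete opfibration (Proposition \ref{prop:bar_P_opfib}) and the left and right classes of the comprehensive factorization are fixed by this orthogonality, establishing it identifies $(Q,\bar P)$ as the comprehensive factorization of $P$ in $\Fib(\cA)$.

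The crucial preliminary step is a collapsing lemma: \emph{if $DU=VQ$ with $D$ a discrete opfibration in $\Fib(\cA)$, then $U$ sends every $P$-vertical arrow to an identity}. Indeed, a $P$-vertical arrow $\xi$ is also $F$-vertical, so $U\xi$ is $F_\cE$-vertical and lives in a single fibre $\cE_a$; moreover $D(U\xi)=V(Q\xi)=1$, because $Q$ carries $P$-vertical arrows to identities of $\bar\cX$. By Corollary \ref{cor:disc_opfib_in_Fib_A}, $D$ is a fiberwise discrete opfibration, so $D_a$-vertical arrows are identities, forcing $U\xi=1$. Consequently $U$ is constant on the $\sim$-classes defining the objects of $\bar\cX$, and I can set $W(\bar x):=U(x)$ unambiguously on objects.

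To define $W$ on a morphism $\mu\colon\bar x_1\to\bar x_2$, I would exploit its canonical factorization $\mu=Q(\hat\alpha^{x_2})\cdot Q(\hat\beta^{x_1})$ into the $\bar F$-cartesian part coming from a cartesian lifting $\hat\alpha^{x_2}$ of $\alpha=G\mu$ and the $\bar F$-vertical part coming from an opcartesian lifting $\hat\beta^{x_1}$ of $\beta=\beta_\mu$, the two being composable precisely because $\overline{\beta_*x_1}=\overline{\alpha^*x_2}$. Setting $W\mu:=U(\hat\alpha^{x_2})\cdot U(\hat\beta^{x_1})$ (a legitimate composite in $\cE$, since the collapsing lemma gives $U(\beta_*x_1)=U(\alpha^*x_2)$), one checks $DW=V$ using $DU=VQ$, functoriality of $V$ and condition (ii) in the definition of morphisms of $\bar\cX$, and $WQ=U$ by evaluating $W$ on $Q\xi$ through the standard factorization (\ref{diag:standard_factorization}) of $\xi$, whose $P$-vertical middle piece is annihilated by $U$. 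That $W$ lands over $\cA$ is automatic, since $F_\cE(W\mu)=G_\cB(DW\mu)=G_\cB(V\mu)=\bar F\mu$; and $W$ is cartesian because $U$ is cartesian, $\hat\alpha^{x_2}$ is $F$-cartesian, and, when $\mu$ is $\bar F$-cartesian, Proposition \ref{prop:characterization_cartesian_in_bar_X} makes $\beta_\mu$ a vertical isomorphism, so that $U(\hat\beta^{x_1})$ is an $F_\cE$-vertical isomorphism and $W\mu$ is $F_\cE$-cartesian. Uniqueness of $W$ is then immediate: every morphism of $\bar\cX$ is a composite of the two arrows $Q(\hat\alpha^{x_2})$ and $Q(\hat\beta^{x_1})$ in the image of $Q$, on which any filler must agree with $U$ through $WQ=U$.

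The main obstacle I anticipate is the well-definedness of $W$ on morphisms: I must verify that $W\mu$ does not depend on the chosen representatives $x_1,x_2$, on the connecting zig-zags, or on the cartesian and opcartesian liftings. The zig-zags and the opcartesian liftings cause no trouble, since they are $P$-vertical and hence killed by $U$ via the collapsing lemma; the genuinely delicate point is the cartesian liftings, where I would invoke Remark \ref{rk:canonical_liftings} and Lemma \ref{lm:compatibility_of_P} to normalise the choices so that any two differ by a $P$-vertical isomorphism, again annihilated by $U$. A secondary subtlety to handle with care is that $D$ is only a \emph{fiberwise} discrete opfibration, and not a discrete opfibration in $\CAT$; hence the lifting arguments must be organised along the vertical/cartesian factorization of $V\mu$ rather than applied to $V\mu$ directly.
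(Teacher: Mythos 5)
Your argument is sound in outline but follows a genuinely different route from the paper. The paper never verifies the orthogonality square directly: it shows that $Q$ is the coidentifier of the identee $\kappa$ of $P$, first in $\CAT$ (Lemma \ref{lm:coid_in_Cat}), then in $\CAT/\cA$ and in $\Fib(\cA)$ (Lemmas \ref{lm:coid_in_CatA} and \ref{lm:coid_in_FibA}), and then invokes the general 2-categorical fact that coidentifiers are initial (Proposition \ref{prop:coid_final_initial}, whose proof via Lemma \ref{lm:disc=disc} is exactly your ``collapsing lemma'' in abstract form). What you do is inline that abstract step: from $DU=VQ$ and discreteness of the fibres of $D$ you deduce directly that $U$ kills $P$-vertical arrows, and then you build the diagonal by the same formula $W\mu=U(\hat\alpha^{x_2})\cdot U(\hat\beta^{x_1})$ that the paper uses to define the induced functor $\bar L$ out of the coidentifier. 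Your version is shorter and self-contained; the paper's version establishes the stronger universal property (couniversality among \emph{all} functors coidentifying $\kappa$, not only those arising from squares against discrete opfibrations), which is what allows the authors to speak of the comprehensive factorization and to reuse the coidentifier description elsewhere.

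The one substantive point your plan does not address is that $W$ preserves composition. This is the heart of the paper's Lemma \ref{lm:coid_in_Cat}: for composable $\mu$, $\mu'$ one must show that $U\hat\alpha'\cdot U\hat\beta'\cdot U\hat\alpha\cdot U\hat\beta$ equals $U(\widehat{\alpha'\cdot\alpha})\cdot U(\widehat{\alpha^*\beta'\cdot\beta})$, and this requires the interchange identity $U\hat\beta'\cdot U\hat\alpha=U\hat\alpha\cdot U(\widehat{\alpha^*\beta'})$ obtained by applying $U$ to diagram (\ref{diag:alpha*beta}) (the comparison $\omega$ there is $P$-vertical, hence killed by $U$), together with the zig-zag analysis of diagram (\ref{diag:bar_X_morphs_composiiton}) and the splitness of $G$ to normalise the iterated cartesian liftings. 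None of this would fail --- your collapsing lemma supplies exactly the tool needed --- but it is a genuine computation rather than a formality, and it should be recorded alongside the well-definedness checks you do anticipate.
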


In fact, we are going to prove that $Q$ is the coidentifier of the identee of $P$ and we will get the result thanks to Proposition \ref{prop:coid_final_initial} (the reader unfamiliar with such 2-categorical notions will find it useful to consult the Appendix). Our strategy will be to prove the theorem by means of a sequence of lemmas. Here is the first one.

\begin{Lemma}\label{lm:lemma_1}
The identee of $P$ in $\CAT$ is also the identee of $P$ in $\CAT/\cA$ and in $\Fib(\cA)$.
\end{Lemma}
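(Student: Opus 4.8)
The plan is to make the identee of $P$ completely explicit and then check that the one piece of data satisfies the appropriate universal property in each of the three $2$-categories, the passages being governed by the single identity $GP=F$. Recall that the identee of $P$ is the universal $2$-cell $\iota\colon d_0\Rightarrow d_1\colon\cE\to\cX$ with $P\iota=\id$ (so in particular $Pd_0=Pd_1$); concretely, in $\CAT$ one takes $\cE$ to be the category whose objects are the $P$-vertical arrows $\nu\colon x_0\to x_1$ of $\cX$ (those with $P\nu=\id$) and whose morphisms are the commutative squares, with $d_0,d_1$ the domain and codomain projections and $\iota_\nu=\nu$. The observation driving everything is that $GP=F$: any $2$-cell $\theta$ with $P\theta=\id$ automatically has $F\theta=GP\theta=\id$, and dually every object $\nu$ of $\cE$ satisfies $F(d_0\nu)=GP(d_0\nu)=GP(d_1\nu)=F(d_1\nu)$.

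First I would treat the passage from $\CAT$ to $\CAT/\cA$. Setting $E:=F\cdot d_0$, the remark above gives $F\cdot d_1=E$ and $F\iota=\id_E$, so $(\cE,E)$ is an object of $\CAT/\cA$, the projections $d_0,d_1$ are $1$-cells over $\cA$, and $\iota$ is a genuine $2$-cell of $\CAT/\cA$. A test datum in $\CAT/\cA$ is a $2$-cell $\sigma\colon a\Rightarrow b\colon(\cD,D)\to(\cX,F)$ with $P\sigma=\id$; forgetting to $\CAT$ it is a test datum there, so it factors uniquely through $\iota$ via some $h\colon\cD\to\cE$, and $E\cdot h=F\cdot d_0\cdot h=F\cdot a=D$ shows $h$ already lives over $\cA$. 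Since the constraint $F\sigma=\id$ defining $\CAT/\cA$-test data is automatic from $P\sigma=\id$, no factorizations are lost or gained, and the $2$-dimensional part transfers by the same forgetful argument.

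The substantial step is the passage to the $2$-full subcategory $\Fib(\cA)$, where I must verify that $(\cE,E)$ is a fibration, that $d_0,d_1$ are cartesian, and that the comparison $h$ above is cartesian whenever the test data are. Given $\nu'\colon x_0'\to x_1'$ in $\cE$ and $\alpha\colon a\to E\nu'=Fx_0'$, write $m':=Px_0'=Px_1'$, fix a $G$-cartesian lifting $\hat\alpha^{m'}$ of $\alpha$ at $m'$, and invoke Lemma \ref{lm:compatibility_of_P} to choose $F$-cartesian liftings $\hat\alpha_0\colon\tilde x_0\to x_0'$ and $\hat\alpha_1\colon\tilde x_1\to x_1'$ with $P\hat\alpha_0=\hat\alpha^{m'}=P\hat\alpha_1$. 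Cartesianness of $\hat\alpha_1$ yields a unique $F$-vertical $\tilde\nu\colon\tilde x_0\to\tilde x_1$ with $\hat\alpha_1\cdot\tilde\nu=\nu'\cdot\hat\alpha_0$, and applying $P$ gives $\hat\alpha^{m'}\cdot P\tilde\nu=\hat\alpha^{m'}$, whence $P\tilde\nu=\id$ by cartesianness; thus $\tilde\nu$ is an object of $\cE$ and $(\hat\alpha_0,\hat\alpha_1)$ is the desired cartesian lifting, so $E$ is a fibration. A routine chase, using cartesianness of each $\hat\alpha_i$ together with the defining squares of $\cE$, shows conversely that a morphism of $\cE$ is $E$-cartesian exactly when both its components are $F$-cartesian; in particular $d_0,d_1$ are cartesian, and if $a,b$ are cartesian then $h$ sends a $D$-cartesian $\delta$ to $(a\delta,b\delta)$, which is $E$-cartesian, so $h$ is cartesian and the $\Fib(\cA)$-universal property follows from the $\CAT/\cA$ one.

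The main obstacle is the on-the-nose equality $P\tilde\nu=\id$ in the last paragraph: an unrestricted choice of cartesian $F$-liftings of $\alpha$ at $x_0'$ and $x_1'$ would only produce a $G$-vertical \emph{isomorphism} between $P\tilde x_0$ and $P\tilde x_1$, which need not be an identity, so $\tilde\nu$ would fail to be an object of the strict identee $\cE$. This is precisely the role of Lemma \ref{lm:compatibility_of_P}: by forcing $P\hat\alpha_0$ and $P\hat\alpha_1$ to equal the one fixed lifting $\hat\alpha^{m'}$, it pins the comparison down to the strict identity, making the fibration structure on $\cE$ strict and thereby identifying $\cE$ as the identee in $\Fib(\cA)$, computed by the very same formula as in $\CAT$.
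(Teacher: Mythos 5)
Your proof is correct and follows essentially the same route as the paper's: an explicit description of the identee as the category of $P$-vertical arrows, transfer to $\CAT/\cA$ by observing that the data automatically live over $\cA$ (the paper phrases this as stability of the identee pullback under slicing), and then verification that $FD=FC$ is a fibration, that the projections and the comparison functors are cartesian. You are in fact more explicit than the paper on the one delicate point, namely forcing $P\tilde\nu=\mathrm{id}$ on the nose via Lemma \ref{lm:compatibility_of_P} rather than only up to a $G$-vertical isomorphism --- the paper leaves this to the reader, relying implicitly on the choices fixed in Remark \ref{rk:canonical_liftings} --- so this is a welcome clarification rather than a divergence.
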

\begin{proof}
Let $\kappa\colon D\Rightarrow C\colon \cI(P)\to \cX$ be the identee of $P$ in $\CAT$. By the description of identees in terms of a pullback (see diagram  (\ref{diag:identee})), it is clear that identees are stable under slicing, so that $\kappa$ is also the identee of $P$ in $\CAT/\cA$:
$$
\xymatrix@C=10ex@R=12ex{
\cI(P)\ar@/^3ex/[r]^(.4)D_(.45){}="1"\ar@/_3ex/[r]_(.65)C^(.65){}="2" \ar[dr]_{F\cdot D=F\cdot C}\ar@{=>}"1";"2"_\kappa&\cX\ar[r]^{P}\ar[d]_{F}&\cM\ar[dl]^{G}\\&\cA}
$$
In order to prove that $\kappa$ is in fact the identee of $P$ also in $\Fib(\cA)$, it suffices to prove that
\begin{itemize}
\item[(i)] $FD=FC$ is a fibration,
\item[(ii)] $C$ and $D$ are cartesian functors,
\item[(iii)] $\kappa$ is a $F$-vertical natural transformation.
\end{itemize}
Hence let us describe the left hand side of the diagram above. We start with the description of the category $\cI(P)$.

The objects of $\cI(P)$ are the arrows of $\cX$
$$
\xymatrix{x\ar[r]^{\omega}& y}
$$
such that $P\omega=1$. The morphisms $(\xi,\upsilon)\colon \omega_1\to\omega_2$ of $\cI(P)$ are pairs $(\xi,\upsilon)$ of arrows  of $\cX$ such that $\upsilon\cdot\omega_1=\omega_2\cdot\xi$. In other words, $\cI(P)$ is the full subcategory of the comma category $(\cX\downarrow\cX)$ determined by the $P$-vertical arrows.

$D$, $C$ and $\kappa$ are obviously defined by:
$$
D\colon(\xi,\upsilon)\mapsto \xi\,,\qquad C\colon(\xi,\upsilon)\mapsto \upsilon\,,\qquad \kappa_{\omega}=\omega\,.
$$
Since $F\omega_i=GP\omega_i=G1=1$, for $i=1,2$, then clearly
$FD(\xi,\upsilon)=F\xi=F\upsilon=FC(\xi,\upsilon)$. Indeed, the functor
$FD=FC$  is a fibration: the reader can check this claim with the help of the following diagram:
$$
\xymatrix@C=13ex{
x_0\ar@/^2ex/[drr]^{\xi'}\ar@{-->}[dr]_{\xi''}\ar[dd]_{\omega_0}
\\
&\alpha^*x_2\ar[r]_{\hat\alpha^{x_2}}\ar@{-->}[dd]^(.35){\omega_1}
&x_2\ar[dd]^{\omega_2}
\\
y_0\ar@/^2ex/[drr]^(.65){\upsilon'}\ar@{-->}[dr]_{\upsilon''}&&&\cX
\\
&\alpha^*y_2\ar[r]_{\hat\alpha^{y_2}}&y_2
\\
a_0\ar[r]_{\alpha''}
&a_1\ar[r]_-{\alpha}
&Fx_2=Fy_2&\cA
}
$$
Hence (i) is proved.

As far as (ii) is concerned, it follows from the fact that $FD$-cartesian morphisms in $\cI(P)$ are precisely those morphisms $(\xi,\upsilon)$ with $F$-cartesian components. This can be easily proved by direct computation, by comparing the cartesian morphism $(\xi,\upsilon)$ with the morphism $(\hat\alpha^{x_2},\hat\alpha^{y_2})$ obtained by lifting $F\xi=F\upsilon$.

Finally, (iii) is obvious, since $F\kappa_\omega=F\omega=1$.
\end{proof}

Now we are ready to prove the following.
\begin{Lemma}\label{lm:coid_in_Cat}
The functor $Q$ is the coidentifier of $\kappa$ in $\CAT$.
\end{Lemma}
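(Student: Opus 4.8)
The plan is to verify directly the universal property of the coidentifier for $Q$. The first step is to unwind what that property asks. Since the objects of $\cI(P)$ are precisely the $P$-vertical arrows and the component of $\kappa$ at such an $\omega$ is $\omega$ itself, a functor $H\colon\cX\to\cY$ satisfies $H\kappa=\mathrm{id}$ if and only if $H$ sends every $P$-vertical arrow to an identity. Indeed the components of $\kappa$ are exactly the $P$-vertical arrows, and the condition $HC=HD$ is then automatic: applying $H$ to the defining equation $\upsilon\cdot\omega_1=\omega_2\cdot\xi$ of a morphism of $\cI(P)$ and using $H\omega_1=H\omega_2=\mathrm{id}$ gives $H\upsilon=H\xi$. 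Thus the coidentifier is nothing but the universal functor out of $\cX$ that trivialises all $P$-vertical arrows, and I must show that $Q$ is such a universal functor.

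That $Q$ trivialises $P$-vertical arrows is immediate: for $P$-vertical $\omega\colon x\to y$ one has $\bar x=\bar y$ and $Q\omega=P\omega=\mathrm{id}$. For the universal property, fix $H\colon\cX\to\cY$ with $H\kappa=\mathrm{id}$ and construct $\bar H\colon\bar\cX\to\cY$ with $\bar H\cdot Q=H$. On objects I set $\bar H(\bar x)=Hx$; this is well defined because $x\sim x'$ means $x$ and $x'$ are joined by a zig-zag of $P$-vertical arrows, each collapsed to an identity by $H$, whence $Hx=Hx'$. On a morphism $\mu\colon\bar x_1\to\bar x_2$ I use its canonical factorisation: writing $\alpha=G\mu$ and $\beta=\beta_\mu$ as in the definition of $\bar\cX$, and taking the opcartesian lifting $\hat\beta^{x_1}$ of $\beta$ at $x_1$ and the cartesian lifting $\hat\alpha^{x_2}$ of $\alpha$ at $x_2$ (normalised as in Remark \ref{rk:canonical_liftings}), the condition $\overline{\beta_*x_1}=\overline{\alpha^*x_2}$ gives $\mu=Q(\hat\alpha^{x_2})\cdot Q(\hat\beta^{x_1})$, and I set $\bar H(\mu)=H(\hat\alpha^{x_2})\cdot H(\hat\beta^{x_1})$.

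Then I would run the routine checks. The target composite is legitimate because $\overline{\beta_*x_1}=\overline{\alpha^*x_2}$ forces $H(\beta_*x_1)=H(\alpha^*x_2)$, so the two $H$-images are composable; it is independent of the chosen liftings because any two (op)cartesian liftings of the same arrow differ by a $P$-vertical isomorphism, which $H$ sends to an identity. Preservation of identities is clear since $\mathrm{id}_{\bar x}=Q(\mathrm{id}_x)$. The equation $\bar H\cdot Q=H$ follows from the standard factorisation \ref{diag:standard_factorization} of an arrow $\xi$ of $\cX$ as $\widehat{F\xi}\cdot\omega\cdot\widehat{P\nu}$ with $\omega$ $P$-vertical: choosing these very liftings to compute $\bar H(Q\xi)$ and using $H\omega=\mathrm{id}$ yields $H\xi=H(\widehat{F\xi})\cdot H(\widehat{P\nu})=\bar H(Q\xi)$. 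Finally uniqueness is forced, since any solution is pinned down on objects by $\bar H\cdot Q=H$ and on a morphism $\mu$ by $\bar H(\mu)=\bar H(Q\hat\alpha^{x_2})\cdot\bar H(Q\hat\beta^{x_1})=H(\hat\alpha^{x_2})\cdot H(\hat\beta^{x_1})$, using functoriality and $\bar H\cdot Q=H$.

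The one substantial point — and the step I expect to be the main obstacle — is that $\bar H$ preserves composition. Given composable $\mu\colon\bar x_1\to\bar x_2$ and $\mu'\colon\bar x_2\to\bar x_3$, one must compare the factorisation of the composite $\mu'\cdot\mu$ with the product $H(\hat\alpha'^{x_3})\cdot H(\hat\beta'^{x_2})\cdot H(\hat\alpha^{x_2})\cdot H(\hat\beta^{x_1})$. This is exactly the bookkeeping already carried out to show that composition in $\bar\cX$ is well defined, organised by diagrams \ref{diag:triangle}, \ref{diag:alpha*beta} and \ref{diag:bar_X_morphs_composiiton}: every comparison arrow appearing there — the $\omega_i$'s, the interchange comparison $\omega$ of condition-(C) type in \ref{diag:alpha*beta}, and the splitness comparison $\alpha^*\alpha'^*x_3\to(\alpha'\cdot\alpha)^*x_3$ for $G$ — is $P$-vertical, hence annihilated by $H$. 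I would therefore import that analysis, inserting $H$ throughout and discarding every $P$-vertical factor, leaving precisely the asserted equality $\bar H(\mu'\cdot\mu)=\bar H(\mu')\cdot\bar H(\mu)$; the remainder of the argument is then formal.
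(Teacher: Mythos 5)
Your proposal is correct and follows essentially the same route as the paper's proof: the induced functor is defined on objects by $\bar H(\bar x)=Hx$ and on morphisms by $\bar H(\mu)=H(\hat\alpha)\cdot H(\hat\beta)$, with functoriality reduced to the same diagrammatic analysis (diagrams (\ref{diag:triangle}), (\ref{diag:alpha*beta}), (\ref{diag:bar_X_morphs_composiiton})) already used to show composition in $\bar\cX$ is well defined, and uniqueness obtained by decomposing an arrow of $\bar\cX$ into a cartesian lifting and a vertical part. The only (welcome) difference is that you verify $\bar H\cdot Q=H$ explicitly via the standard factorization (\ref{diag:standard_factorization}), a step the paper leaves implicit.
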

\begin{proof}
By definition of $Q$, given an object $\omega\colon x\to y$ of $\cI(P)$, one computes
$$
(Q\kappa)_{\omega}=Q(\kappa_\omega)=Q(\omega)=1_{\bar x}=1_{(QD)\omega}\,.
$$
In other words,  $Q$ coidentifies $\kappa$.

Let us check the universal property. To this end, let us consider the following diagram of solid arrows in $\CAT$:
$$
\xymatrix@C=10ex@R=12ex{
\cI(P)\ar@/^3ex/[r]^D_{}="1"\ar@/_3ex/[r]_C^{}="2" \ar@{=>}"1";"2"_\kappa&\cX\ar[r]^{Q}\ar[dr]_{L}&\bar\cX\ar@{-->}[d]^{\bar L}\\&&\cL}
$$
with $L\kappa=id_{LD}$. Our aim is to define a functor $\bar L\colon \bar \cX\to \cL$ such that $\bar LQ=L$, and prove that it is unique with this property.

On objects, the obvious definition is $\bar L(\bar x)=Lx$. In fact, it is well posed, since if $y\in \bar x$, then there is a zig-zag of arrows $\omega_i$ connecting $y$ with $x$, which are turned into identities by $L$, so that $Ly=Lx$.

On morphisms the situation is more sensible. We let
$$
\bar L(\xymatrix{\bar x_1\ar[r]^-\mu&\bar x_2})= L(\hat\alpha)\cdot L(\hat\beta)\,,
$$
where $\alpha=\alpha_{\mu}$ and $\beta=\beta_{\mu}$ (see Section \ref{sec:bar_X} for details). In fact, by Lemma \ref{lm:bar_X_morphs_well_defined}, also this definition is well posed. Moreover, for every object $\bar x$ of $\bar\cX$,
$$
\bar L(1_{\bar x})= 1_{Lx}=1_{\bar L\bar x}\,.
$$
Therefore, what remains to be proved is that $\bar L$ preserves the composition of morphisms of $\bar\cX$.

To this end, let us consider a pair of composable morphisms of $\bar\cX$:
$$
\xymatrix{
\bar x_1\ar[r]^{\mu}&\bar x_2\ar[r]^{\mu'}&\bar x_3
}
$$
together with their decomposition as in diagram (\ref{diag:composition_in_bar_X}). We are to prove that:
$$
L\hat\alpha'\cdot L\hat\beta'\cdot L\hat\alpha\cdot L\hat\beta=L(\widehat{\alpha'\cdot\alpha})\cdot L(\widehat{\alpha^*\beta'\cdot\beta})\,.
$$
where $\alpha^*\beta'$ is defined in diagram (\ref{diag:triangle}). Indeed, applying $L$ to diagram (\ref{diag:alpha*beta}) yields
\begin{equation}\label{eq:intermediate_step}
L\hat\beta'\cdot L\hat\alpha=L\hat\alpha\cdot L(\widehat{\alpha^*\beta'}).
\end{equation}
The rest of the proof follows from a careful analysis of diagram (\ref{diag:bar_X_morphs_composiiton}).

As far as the right-hand side of this diagram is concerned, one observes (for the top commutative rectangle) that, since $L(\gamma_1'')=1$ and $L(\omega_1'')=1$,
$$
L\hat\alpha^{\beta'_*x_2} =L\hat\alpha^{w}\,.
$$
This process can be iterated in order to show that, applied to any of the cartesian lifting of $\alpha$ represented in diagram (\ref{diag:bar_X_morphs_composiiton}), $L$ always assumes the same value $L\hat\alpha$.
For the same reason, as far as the left hand side of the diagram is concerned, applied to any of the cartesian lifting of $\alpha^*\beta'$, $L$ always assumes the same value $L(\widehat{\alpha^*\beta'})$.
Therefore we have the calculation:
$$
L\hat\alpha'\cdot L\hat\beta'\cdot L\hat\alpha\cdot L\hat\beta=
L\hat\alpha'\cdot L\hat\alpha\cdot L(\widehat{\alpha^*\beta'})\cdot L\hat\beta=
$$
$$
=L(\widehat{\alpha'\cdot\alpha})\cdot L(\widehat{\alpha^*\beta'})\cdot L\hat\beta
=L(\widehat{\alpha'\cdot\alpha})\cdot L(\widehat{\alpha^*\beta'\cdot\beta})\,.
$$

The uniqueness is proved in this way. Let us suppose there is another functor $L'\colon \bar\cX\to \cL$ such that $L'\cdot Q=L$. Since $\bar F$ is a fibration, any morphism $\xymatrix{\bar x_1\ar[r]^{\mu}&\bar x_2}$ of $\bar X$ can be decomposed into a pair $(\beta,\hat\alpha)$,  where $\hat\alpha$ is a $\bar F$-cartesian lifting of $\alpha=\bar F\mu$ at $\bar x_2$ and $\beta$ is the $\bar F$-vertical comparison, then it suffices to check $L'=\bar L$ on such classes of arrows. Actually, as seen in Proposition \ref{prop:bar_F_fibration}, $\hat\alpha=Q\hat\alpha^{x_2}$, so that
$$
\bar L\hat \alpha=
\bar LQ\hat\alpha^{x_2}=
L'Q\hat\alpha^{x_2}=
L'\hat \alpha\,.$$
On the other hand, given an $\bar F$-vertical arrow $\beta$, $1=\bar F\beta=G(\bar P\beta)$, i.e. $G\beta=1$. Therefore we can consider an opcartesian lifting $\hat\beta^{x_1}$ of such an arrow at $x_1$, and conclude the proof:
$$
\bar L\beta=\bar LQ\hat\beta^{x_1}
= L'Q\hat\beta^{x_1}=L'\beta \,.
$$
\end{proof}
\begin{Lemma}\label{lm:coid_in_CatA}
The functor $Q$ is the coidentifier of $\kappa$ in $\CAT/\cA$.
\end{Lemma}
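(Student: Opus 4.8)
The plan is to piggyback on Lemma \ref{lm:coid_in_Cat}, which already exhibits $Q$ as the coidentifier of $\kappa$ in $\CAT$, and to promote this to $\CAT/\cA$ by verifying that each ingredient of the universal property stays over $\cA$. Since the hom-categories of $\CAT/\cA$ sit inside those of $\CAT$, with objects the functors over $\cA$ and morphisms the $\cA$-vertical transformations, the coidentifier universal property in $\CAT/\cA$ is exactly the restriction of the one in $\CAT$ to test data over $\cA$. First I would record that $Q$ coidentifies $\kappa$ already in $\CAT/\cA$: by Lemma \ref{lm:lemma_1} the $2$-cell $\kappa$ lives in $\CAT/\cA$, the relation $\bar F\cdot Q=F$ makes $Q$ a $1$-cell of $\CAT/\cA$, and the equation $Q\kappa=\mathrm{id}$ is precisely what was established in Lemma \ref{lm:coid_in_Cat}.

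For the one-dimensional universal property I would start from a $1$-cell $L\colon(\cX,F)\to(\cL,\ell)$ of $\CAT/\cA$, so that $\ell\cdot L=F$, satisfying $L\kappa=\mathrm{id}$. Forgetting the base and applying Lemma \ref{lm:coid_in_Cat} produces a unique functor $\bar L\colon\bar\cX\to\cL$ with $\bar L\cdot Q=L$; its uniqueness as a functor over $\cA$ is then automatic, being inherited from uniqueness in $\CAT$. The only genuinely new point is that $\bar L$ is a $1$-cell of $\CAT/\cA$, i.e.\ $\ell\cdot\bar L=\bar F$. On objects this is immediate, since $\ell\bar L(\bar x)=\ell L(x)=F(x)=\bar F(\bar x)$. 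On a morphism $\mu\colon\bar x_1\to\bar x_2$ I would invoke the explicit description $\bar L(\mu)=L\hat\alpha\cdot L\hat\beta$ from Lemma \ref{lm:coid_in_Cat}, with $\alpha=G\mu$ and $\beta$ the $G$-vertical comparison. Applying $\ell$ and using $\ell L=F$ gives $\ell\bar L(\mu)=F\hat\alpha\cdot F\hat\beta$; here $\hat\alpha$ is an $F$-cartesian lifting of $\alpha$, so $F\hat\alpha=\alpha$, while $\hat\beta$ is an opcartesian lifting of the $G$-vertical arrow $\beta$, whence $F\hat\beta=GP\hat\beta=G\beta=1$. Therefore $\ell\bar L(\mu)=\alpha=\bar F(\mu)$, as required.

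For the two-dimensional part, a coidentifier requires whiskering by $Q$ to be fully faithful onto the $1$-cells identifying $\kappa$, so I would check that it induces a bijection on $\ell$-vertical natural transformations. The full two-dimensional universal property in $\CAT$ is already guaranteed by Lemma \ref{lm:coid_in_Cat}, so it remains only to transfer verticality. This works because $Q$ is surjective on objects: every object of $\bar\cX$ has the form $\bar x=Qx$, so for any $2$-cell $\bar\theta$ one has $\bar\theta_{\bar x}=(\bar\theta Q)_x$, and hence $\bar\theta$ is $\ell$-vertical if and only if $\bar\theta Q$ is. I expect the main, albeit light, obstacle to be precisely these verticality computations — confirming $F\hat\beta=1$ in the one-dimensional step and matching components in the two-dimensional step — since everything structural is transported verbatim from Lemma \ref{lm:coid_in_Cat}.
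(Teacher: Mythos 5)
Your proposal is correct and follows essentially the same route as the paper: reduce to Lemma \ref{lm:coid_in_Cat} to obtain the unique $\bar L$ in $\CAT$, then check it is a morphism over $\cA$. The only (minor) difference is in that last step: you verify $\ell\cdot\bar L=\bar F$ by direct computation from the explicit formula $\bar L(\mu)=L\hat\alpha\cdot L\hat\beta$, whereas the paper deduces it more quickly from $H\cdot\bar L\cdot Q=F=\bar F\cdot Q$ together with the couniversality (uniqueness of factorizations) of $Q$; both arguments are valid.
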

\begin{proof}
Let us consider the following diagram of solid arrows in $\CAT/\cA$:
$$
\xymatrix@C=10ex@R=6ex{
\cI(P)\ar@/^3ex/[r]^(.4)D_(.30){}="1"\ar@/_3ex/[r]_(.65)C^(.70){}="2" \ar[ddr]_{F\cdot D=F\cdot C}\ar@{=>}"1";"2"^\kappa
&\cX\ar[r]^{Q}\ar[dd]_{F}\ar[drr]_(.75){L}
&\bar\cX\ar[ddl]_{\bar F}\ar@{-->}[dr]^{\bar L}
\\
&&&\cL\ar[dll]^{H}
\\
&\cA}
$$
with
$$
L\cdot \kappa=id_{L\cdot D}\colon L\cdot D\Rightarrow L\cdot C\colon (\cI(P),F\cdot D)\to (\cL,H)\,.
$$
This implies that, for every object $\omega\colon x\to y$ of $\cI(P)$, $L(\kappa_\omega)=L\omega=1_{Lx}$, i.e.\ $L\cdot\kappa=id$ in $\CAT$. By Lemma \ref{lm:coid_in_Cat} above, there exists a unique $\bar L$ such that $\bar L\cdot Q=L$. But clearly $\bar L$ is a morphism in $\CAT/\cA$, since $H\cdot \bar L\cdot Q=F=\bar F\cdot Q$ and $Q$ is couniversal.
\end{proof}
\begin{Lemma}\label{lm:coid_in_FibA}
The functor $Q$ is the coidentifier of $\kappa$ in $\Fib(\cA)$.
\end{Lemma}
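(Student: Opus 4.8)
The plan is to bootstrap the result from Lemma \ref{lm:coid_in_CatA}, exactly as Lemma \ref{lm:coid_in_CatA} was bootstrapped from Lemma \ref{lm:coid_in_Cat}. Three of the four ingredients are already in place: by Lemma \ref{lm:lemma_1} the natural transformation $\kappa$ is the identee of $P$ in $\Fib(\cA)$; the functor $Q\colon(\cX,F)\to(\bar\cX,\bar F)$ has already been shown to be a morphism of $\Fib(\cA)$ (i.e.\ a cartesian functor); and the coidentifying equation $Q\kappa=\id$ holds already in $\CAT$, hence a fortiori in $\Fib(\cA)$. So the only thing left is to verify the universal property of the coidentifier \emph{internally} to $\Fib(\cA)$.

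To that end I would start from a cartesian functor $L\colon(\cX,F)\to(\cL,H)$ over $\cA$ satisfying $L\kappa=\id$. Since $L$ is in particular a $1$-cell of $\CAT/\cA$ with $L\kappa=\id$, Lemma \ref{lm:coid_in_CatA} furnishes a unique morphism $\bar L\colon(\bar\cX,\bar F)\to(\cL,H)$ of $\CAT/\cA$ with $\bar L\cdot Q=L$. Uniqueness of $\bar L$ as a $1$-cell of $\Fib(\cA)$ is then automatic: any cartesian $\bar L'$ with $\bar L'\cdot Q=L$ is in particular a $\CAT/\cA$-morphism with this property, so $\bar L'=\bar L$ by Lemma \ref{lm:coid_in_CatA}. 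Consequently the entire content of the lemma reduces to a single check, namely that the comparison $\bar L$ produced over $\cA$ is in fact \emph{cartesian}.

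This cartesianness is the one genuinely new point, and I expect it to be the crux of the argument. To establish it I would use the explicit cleavage of $\bar F$ exhibited in the proof of Proposition \ref{prop:bar_F_fibration}: for $\alpha\colon a_1\to \bar F\bar x_2=Fx_2$, an $\bar F$-cartesian lifting of $\alpha$ at $\bar x_2$ is given by $Q(\hat\alpha^{x_2})$, where $\hat\alpha^{x_2}$ is an $F$-cartesian lifting of $\alpha$ at $x_2$ in $\cX$. Applying $\bar L$ and using $\bar L\cdot Q=L$ gives $\bar L\bigl(Q(\hat\alpha^{x_2})\bigr)=L(\hat\alpha^{x_2})$, which is $H$-cartesian because $L$ is cartesian and $\hat\alpha^{x_2}$ is $F$-cartesian. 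Thus $\bar L$ preserves this chosen cleavage of $\bar F$; since every $\bar F$-cartesian arrow is such a chosen lifting precomposed with an $\bar F$-vertical isomorphism (which $\bar L$, being a functor, sends to an $H$-vertical isomorphism, hence to an $H$-cartesian arrow), it follows that $\bar L$ preserves all cartesian arrows. Alternatively one may invoke Proposition \ref{prop:characterization_cartesian_in_bar_X} to recognise the $\bar F$-cartesian arrows directly through their underlying $G$-cartesian morphisms. Either way $\bar L$ is cartesian, the universal property holds in $\Fib(\cA)$, and the proof is complete.
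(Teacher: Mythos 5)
Your proposal is correct and follows essentially the same route as the paper: reduce to Lemma \ref{lm:coid_in_CatA} for existence and uniqueness of $\bar L$ over $\cA$, and then check cartesianness of $\bar L$ by observing that every $\bar F$-cartesian arrow of $\bar\cX$ is (up to vertical isomorphism) the image under $Q$ of an $F$-cartesian arrow of $\cX$, so that $\bar L$ sends it to the $L$-image of an $F$-cartesian arrow, which is $H$-cartesian since $L$ is cartesian. The paper phrases this last step directly via Proposition \ref{prop:characterization_cartesian_in_bar_X}, exactly as in the alternative you mention.
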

\begin{proof}
We can repeat the argument used in the proof of Lemma \ref{lm:coid_in_CatA}, but starting with fibrations $F$, $\bar F$ and $H$, and cartesian functors $Q$ and $L$. What remains to prove is that the functor $\bar L$ as above is cartesian. Now, by Proposition \ref{prop:characterization_cartesian_in_bar_X}, $\bar F$-cartesian morphisms in $\bar\cX$ can be lifted to $F$-cartesian morphisms in $\cX$. Therefore their image under $\bar L$ coincides with the image under $L$ of an $F$-cartesian morphism of $\cX$. But $L$ is cartesian, and the proof is done.
\end{proof}

\section{Some selected examples} \label{sec:Examples}

Relevant examples of the structures investigated in the present paper arise naturally in the study of (not necessarily abelian) cohomological algebra.

As a matter of fact, a classical interpretation of cohomology coefficients in group cohomology are the so-called \emph{crossed $n$-fold extensions of groups}. They are a natural generalization of Yoneda's $\Ext^n$, therefore their assessment in our setting is in order.

\subsection{Crossed extensions of groups} \label{sec:groups}

Crossed modules of groups have been introduced by J.H.C. Whitehead in \cite{Whi}. Later, they have been used by Mac Lane and Whitehead to represent the third cohomology group  of a multiplicative group $C$ with coefficients in a $C$-module $B$  (see \cite{MLW}). To this end, it is more appropriate to consider crossed modules with chosen kernel and cokernel, i.e.\ what is commonly known as \emph{crossed extension}.

\begin{Remark}{\em
The informed reader could argue that we should have started with the simpler example of group extensions with abelian kernel. However, as it will be clearer later in this section, there are reasons for which that example is \emph{too simple} to start with, and its simplicity hides features that can be appreciated only for crossed extensions of dimension $n\geq 2$. This is true also in  the abelian case, where the similarity relation for $n=1$ is already an equivalence relation:  by the \emph{five lemma},  all morphisms of extensions with fixed kernel and cokernel are isomorphisms.
 }\end{Remark}

A crossed extension of groups is an exact sequence in $\Gp$
\begin{equation}
X\colon\qquad
\xymatrix{
0\ar[r]
& B\ar[r]^j
& G_2\ar[r]^{\partial}
& G_1\ar[r]^p
& C\ar[r]
& 1}
\end{equation}
where the central homomorphism is a crossed module. Recall that this means that $G_1$ acts on $G_2$ in such a way that, for every $g_1\in G_1$ and $g_2,g_2' \in G_2$, one has:
$$
(i)\ \ \partial(g_1*g_2)= g_1 \partial(g_2) g_1^{-1}\,,\qquad (ii) \ \ \partial(g_2)*g_2'=g_2+g_2'-g_2\,,
$$
where the group operation of $G_2$ is written additively, although $G_2$ is not necessarily abelian.
Recall also that crossed modules are always proper homomorphisms, i.e.\ $\partial$ factors as a regular epimorphism (a surjection, in this case) followed by a normal monomorphism, so that every crossed module together with its kernel and its cokernel always yields an exact sequence. It is easy to see that $B$ is contained in the centre of $G_2$, and therefore the action of $G_1$ on $G_2$ induces a natural $C$-module structure on the abelian group $B$.

A morphism of crossed extensions $(\gamma,f_1,f_2,\beta)\colon X\to X' $ is a morphism of exact sequences
$$
\xymatrix{
X\colon\ar@<-1ex>[d]&
0\ar[r]
& B\ar[r]^j\ar[d]_\beta
& G_2\ar[r]^{\partial}\ar[d]_{f_2}
& G_1\ar[r]^p\ar[d]^{f_1}
& C\ar[r]\ar[d]^{\gamma}
& 1
\\
X'\colon&
0\ar[r]
& B'\ar[r]_{j'}
& G'_2\ar[r]_{\partial'}
& G'_1\ar[r]_{p'}
& C'\ar[r]
& 1
}
$$
such that $(f_1,f_2)$ is a morphism of crossed modules, i.e., equivariant with respect to the actions. As a consequence, the pair $(\gamma,\beta)$ is a morphism of group-modules.

Let us denote by $\XExt$ the category of crossed extensions of groups and their morphisms, and by $\Mod$ the category of group-modules and their morphisms. In fact, we have just defined the forgetful  functor $\Pi\colon \XExt\to\Mod$, where $\Pi(\gamma,f_1,f_2,\beta)=(\gamma,\beta)$.

\begin{Theorem}\label{thm:fib_opfib_XModGp}
The commutative diagram of categories and functors
\begin{equation} \label{diag:xext_triang}
\begin{aligned}
\xymatrix{\XExt\ar[rr]^\Pi\ar[dr]_{\Pi_0}&&\Mod\ar[dl]^{(\underline{\ })_0}\\&\Gp}
\end{aligned}
\end{equation}
is a fiberwise opfibration in $\Fib(\Gp)$,
where $\Pi$ has been defined above, and $(\underline{\ })_0$ is the functor that assigns the group $C$ to the $C$-module $B$.
\end{Theorem}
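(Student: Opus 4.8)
The plan is to unwind what it means for $\Pi$ to be a fiberwise opfibration in $\Fib(\Gp)$ according to Definition \ref{def:fiberwise}: I must show that both legs $\Pi_0$ and $(\underline{\ })_0$ are fibrations over $\Gp$, that $\Pi$ is a cartesian functor (so that $\Pi$ is genuinely a morphism of $\Fib(\Gp)$), and finally that for each group $C$ the restriction to fibers $\Pi_C\colon\XExt_C\to\Mod_C$ is an opfibration.

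For the codomain leg I would recall that $(\underline{\ })_0\colon\Mod\to\Gp$ is the classical fibration of modules: given a $C$-module $B$ and $\gamma\colon C_0\to C$, the cartesian lifting is restriction of scalars $\gamma^*B$, i.e.\ $B$ regarded as a $C_0$-module through $\gamma$, together with $(\gamma,1_B)\colon(C_0,\gamma^*B)\to(C,B)$; since $(\gamma'\gamma)^*=\gamma^*\gamma'^*$ holds strictly, this fibration is even split. For the domain leg $\Pi_0\colon\XExt\to\Gp$ I would produce cartesian liftings by \emph{cotranslation}, pulling back the cokernel: given $X\colon 0\to B\to G_2\xrightarrow{\partial}G_1\xrightarrow{p}C\to 1$ and $\alpha\colon C_0\to C$, form $G_1\times_C C_0$ and set $\alpha^*X\colon 0\to B\to G_2\to G_1\times_C C_0\to C_0\to 1$, with $G_1\times_C C_0$ acting on $G_2$ through its projection to $G_1$. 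One checks this is again a crossed extension (only exactness at $G_1\times_C C_0$ and the crossed-module axioms need attention, and both follow formally from those of $X$) and that $\hat\alpha=(\alpha,\mathrm{pr},1_{G_2},1_B)\colon\alpha^*X\to X$ is cartesian, the factorization being that of the pullback on the $G_1$-component while the remaining components are forced. Finally, $\Pi$ is cartesian because the kernel of $\alpha^*X$ is again $B$, now a $C_0$-module via $\alpha$; thus $\Pi(\alpha^*X)=(C_0,\alpha^*B)$ and $\Pi(\hat\alpha)=(\alpha,1_B)$ is exactly the restriction-of-scalars lifting in $\Mod$, so $\Pi$ preserves cartesian arrows.

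The heart of the matter, and the step I expect to be the main obstacle, is that each fiber functor $\Pi_C$ is an opfibration. Its objects are crossed extensions with fixed cokernel $C$ and $\Pi_C$ sends $X$ to its kernel $B$, an abelian group central in $G_2$ carrying the induced $C$-module structure; given a $C$-module map $\beta\colon B\to B'$, the opcartesian lifting cannot be a pushout as in Yoneda's additive setting but must be realized as a \emph{push forward} (translation) of the central kernel. Concretely I would set $G_2'=(G_2\times B')/N$ with $N=\{(j(b),-\beta(b)):b\in B\}$, which is a central, hence normal, subgroup precisely because $j(B)$ is central in $G_2$ and $B'$ is abelian. I would then verify that $\beta_*X\colon 0\to B'\to G_2'\to G_1\to C\to 1$ is again a crossed extension with the same cokernel $C$, that $G_1$ acts on $G_2'$ — here the descent of the action to the quotient uses the $C$-equivariance of $\beta$ together with $\partial j=0$ — and that $\hat\beta=(1_C,1_{G_1},[(-,0)],\beta)\colon X\to\beta_*X$ is a morphism of the fiber whose kernel-component is $\beta$. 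Opcartesianness of $\hat\beta$ then reduces to the universal property of the defining quotient $G_2'$: any morphism out of $X$ in $\XExt_C$ whose kernel-component factors through $\beta$ extends uniquely across $\hat\beta$.

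The genuinely delicate verifications are the crossed-module (Peiffer) axioms for the two constructed homomorphisms — $\partial$ on $G_1\times_C C_0$ and $\partial'$ on $G_2'$ — where centrality of $B$ in $G_2$ and the $C$-equivariance of $\beta$ are exactly the hypotheses that make the push-forward quotient well-defined and crossed. Once the fiber opfibration is established, Proposition \ref{prop:opfib_eq_globallyopfib} guarantees that these fiberwise opcartesian liftings are automatically globally opcartesian, so no additional argument is needed; combining this with the two fibration structures and the cartesianness of $\Pi$ yields that $\Pi$ is a fiberwise opfibration in $\Fib(\Gp)$.
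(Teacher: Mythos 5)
Your proposal is correct and follows essentially the same route as the paper: pullback of the cokernel for the cartesian liftings of $\Pi_0$, restriction of scalars for the split fibration $(\underline{\ })_0$, preservation of kernels for cartesianness of $\Pi$, and the push forward $(G_2\times B')/\{(j(b),-\beta(b))\}$ (the paper's $B'\times^B G_2$) for the opcartesian liftings in the fibers, with centrality of $j(B)$ and $C$-equivariance of $\beta$ doing exactly the work you identify.
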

\begin{proof}
We only sketch the proof, since all the ingredients are well-known facts in the literature concerning crossed modules and extensions.

\begin{itemize}
\item[(i)] $\Pi_0$ is a fibration. Indeed, given a crossed extension $X'$ as above, and a group homomorphism $\gamma \colon C\to C'$, its cartesian lifting  $\hat\gamma\colon \gamma^*X'\to X'$  is given by the following construction:
$$
\xymatrix{
\gamma^*X'\colon\ar@<-1ex>[d]&
0\ar[r]
& B'\ar[r]^{j'}\ar@{=}[d]
& G'_2\ar[r]^-{\langle\partial',0\rangle}\ar@{=}[d]
& G_1'\times_{C'}C\ar[r]^-{pr_2}\ar[d]_{pr_1}
& C\ar[r]\ar[d]^{\gamma}
& 1
\\
X'\colon&
0\ar[r]
& B'\ar[r]_{j'}
& G'_2\ar[r]_{\partial'}
& G_1'\ar[r]_{p'}
& C'\ar[r]
& 1
}
$$
where the rightmost square is a pullback in $\Gp$ (sometimes called fibered product of groups). Calculations show that the comparison $\langle\partial',0\rangle$ inherits a crossed module structure such that the four-tuple  $(\gamma,pr_1,1,1)$ is a morphism of crossed extensions, cartesian with respect to
$\Pi_0$.
Let us notice that the fibration $\Pi_0$ is not split.
\item[(ii)] $(\underline{\ })_0$ is a split fibration. This is straightforward, since  for a $C'$-module $B'$, and a group homomorphism $\gamma\colon C\to C'$, one obtains a cartesian lifting by precomposition:
$$
\xymatrix{\gamma^*M'\colon\quad \ar@<-1.5ex>[d]_{(\gamma,1)}& \ar[d]_{\gamma\times 1}C\times B'\ar[r]& B'\ar@{=}[d] \\
M'\colon\quad &C'\times B'\ar[r]& B'}
$$
\item[(iii)] One easily checks that $\Pi$ is a cartesian functor.
\item[(iv)] Let us fix a group $C$; the restriction to the fiber $\Pi_C\colon \XExt_C\to\Mod_C$ is an opfibration.
In order to prove this, let us consider a crossed extension $X$ as above and a $C$-module homomorphism $(1_C,\beta)\colon B\to B'$. In this case, the strategy to produce an opcartesian lifting of $\beta$ at $X$ is similar to the one we realized  in (i), but not strictly dual as one might think. As a matter of fact, if we would compute a pushout of $j$ along $\beta$ (sometimes called  amalgamate sum of groups), we would possibly lose left exactness of the resulting sequence, because the pushout of a monomorphism is not a monomorphism in general. This fact does not happen in the abelian setting, and this is why Yoneda could set up the opfibration dualizing the fibration. The categorical construction we need here is called \emph{push forward} in \cite{pf}; it generalizes the one proposed by Mac Lane in \cite[Exercise IV.4.3]{Homology}, and in this case it specializes the one described in \cite{Behrang} for $j$ any crossed module. Explicitly, let us consider the pair $(\rho,B'\times^BG_2)$, defined by the following short exact sequence of groups:
$$
\xymatrix@C=6ex{
0\ar[r]
&B\ar[r]^-{\langle \beta,-j\rangle}
&B'\times G_2\ar[r]^-\rho
&B'\times^BG_2\ar[r]
&0}
$$
where the normal monomorphism $\langle \beta,-j\rangle$ is given by the assignment
$$
b\mapsto (\beta(b),-j(b))\,.
$$
The following diagram displays the relevant constructions.
$$
\xymatrix@C=3ex@R=3ex{
X\ar@<-1ex>[dd]\colon&
0\ar[r]
&B\ar[rr]^j\ar[dd]_{\beta}
&&G_2\ar[r]^{\partial}\ar[dl]_{\iota_2}\ar[dd]^{\rho\cdot\iota_2}
&G_1\ar[r]^{p}\ar@{=}[dd]
&C\ar[r]\ar@{=}[dd]
&1
\\
&&&B'\!\times\!G_2\ar[dr]^{\rho}
\\
\beta_*X\colon
&0\ar[r]
&B'\ar[rr]_{\rho\cdot\iota_1} \ar[ur]^{\iota_1}
&&B'\times^BG_2\ar[r]_-{\delta}
&G_1\ar[r]_p
&C\ar[r]
&1
}
$$
On the left-hand side of the diagram, the arrows $\iota_1$ and $\iota_2$ are given by the assignments $\iota_1(b')=(b',0)$ and  $\iota_2(g_2)=(0,g_2)$. Of course there is no reason why $\iota_1\cdot \beta=\iota_2\cdot j$, in general. However, the square of vertexes $B$, $G_2$, $B'$ and $B'\times^BG_2$ does commute, since in fact $\rho$ is the coequalizer of the pair $(\iota_1\cdot\beta,\iota_2\cdot j)$. The homomorphism $\rho\cdot\iota_1$  is the \emph{push forward} of $j$ along $\beta$ \cite{pf}.  It is possible to prove that the $\rho\cdot\iota_1$  is in fact a normal monomorphism and that it has the same (isomorphic) cokernel as $j$ so that we can obtain a homomorphism $\delta$ such that $\delta\cdot\rho\cdot\iota_2=\partial$. Moreover, $\delta$ is a crossed module, with action induced from both the $C$-module structure of $B'$ and the crossed module structure of $\partial$. Namely, for $g_1\in G_1$ and $[b',g_2]\in B'\times^B G_2$, we have (classes denoted by square brackets):
$$
g_1*[b',g_2]=[p(g_1)*b', g_1*g_2] \,.
$$
Routine calculations show that the crossed extension $\beta_*X=(\rho\cdot\iota_1,\delta,p)$ is $\Pi_C$-opcartesian.
\end{itemize}
\end{proof}
Even if the above theorem is what we need in order to have crossed extensions of groups fitting into our general scheme, it may be interesting to observe that actually $\Pi$ fulfills condition (C) of Section \ref{sec:opfib_FibA}, i.e.\ the following proposition holds.

\begin{Proposition} \label{prop:CforXextGp}
The morphism $\Pi$ of diagram (\ref{diag:xext_triang}) is an opfibration in $\Fib(\Gp)$.
\end{Proposition}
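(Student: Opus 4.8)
The plan is to derive the statement directly from Theorem \ref{thm:opfib_in_Fib_A}. We already know from Theorem \ref{thm:fib_opfib_XModGp} that $\Pi$ is a fiberwise opfibration in $\Fib(\Gp)$, so by the equivalence (I)$\Leftrightarrow$(II) of Theorem \ref{thm:opfib_in_Fib_A} it will be an opfibration in the $2$-category $\Fib(\Gp)$ as soon as we check that it satisfies condition (C). Thus the entire proof reduces to verifying (C) for an arbitrary crossed extension $X$, a group homomorphism $\gamma\colon C_0\to C$ with codomain the cokernel $C=\Pi_0 X$, and a $C$-module morphism $\beta\colon B\to B'$ over $1_C$, where $(C,B)=\Pi X$; that is, to showing that the canonical comparison $\omega\colon(\gamma^*\beta)_*\gamma^*X\to\gamma^*\beta_*X$ of diagram (\ref{diag:C}) is an isomorphism.

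My strategy for (C) is an explicit computation of both composite constructions, followed by the observation that they coincide. First I would write $\gamma^*X$ using part (i) of the proof of Theorem \ref{thm:fib_opfib_XModGp}: its kernel inclusion is unchanged, equal to $j\colon B\to G_2$, while the term $G_1$ is replaced by the fibred product $G_1\times_C C_0$ and the connecting map becomes $\langle\partial,0\rangle$. Next I would form the push forward of $\gamma^*X$ along $\gamma^*\beta$ using the short exact sequence defining $B'\times^B G_2$ from part (iv). The key point is that the two operations act on complementary parts of the exact sequence: the push forward modifies only the left-hand data $(B,G_2,j)$ --- which the pullback along $\gamma$ leaves untouched, and for which $\gamma^*\beta$ is just $\beta$ equipped with the $C_0$-action restricted along $\gamma$, since $(\underline{\ })_0$ is split by precomposition --- while the pullback modifies only the right-hand data $(G_1,p)$, which the push forward along $\beta$ leaves untouched.

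Carrying out both computations in the two orders, I expect to obtain \emph{literally the same} crossed extension over $C_0$: kernel $B'$, middle-left term $B'\times^B G_2$, middle-right term $G_1\times_C C_0$, cokernel $C_0$, and in both cases the connecting homomorphism $[b',g_2]\mapsto(\partial g_2,e_{C_0})$; the two induced crossed-module actions agree as well, immediately from the formula $g_1*[b',g_2]=[p(g_1)*b',\,g_1*g_2]$ of part (iv) read off along $\gamma$. Consequently the universal $\Pi$-vertical comparison $\omega$ is the identity (or the evident canonical isomorphism induced by the splitting of $(\underline{\ })_0$), hence certainly an isomorphism, and (C) is verified.

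The main obstacle, mild as it is, is bookkeeping: one has to make sure that the group $B'\times^B G_2$ produced by the opcartesian lifting of part (iv) for the pulled-back extension $\gamma^*X$ is genuinely the same as the one produced for $X$. This is true because that object is the coequalizer of the pair $(\iota_1\cdot\beta,\iota_2\cdot j)$, which depends only on the left-hand portion of the sequence and is therefore insensitive to the replacement of $G_1$ by $G_1\times_C C_0$. Once this identification is in place, tracing $\omega$ through diagram (\ref{diag:C}) to confirm it is an isomorphism is routine.
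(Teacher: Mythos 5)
Your proposal is correct and follows essentially the same route as the paper: reduce via Theorem \ref{thm:opfib_in_Fib_A} and Theorem \ref{thm:fib_opfib_XModGp} to verifying condition (C), then check it by computing both composites explicitly and observing that pullback along $\gamma$ only alters the right-hand data $(G_1,p)$ while the push forward along $\beta$ only alters the left-hand data $(B,G_2,j)$, so both orders yield the same crossed extension with kernel $B'$, terms $B'\times^B G_2$ and $G_1\times_C C_0$, and connecting map the composite of the cokernel of $j'$ with the kernel of $p'$. The paper phrases this identification slightly more abstractly, but the argument is the same.
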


\begin{proof}
Thanks to Theorem \ref{thm:opfib_in_Fib_A} and Theorem \ref{thm:fib_opfib_XModGp} above, we only have to show that condition (C) holds for $\Pi$. Let us consider the following diagram, displaying the cartesian and opcartesian lifting of a crossed extension $X$ along a group homomorphism $\gamma$ and a $C$-module homomorphism $\beta$ respectively:
\[
\xymatrix@!=1.5ex{
& \gamma^*X \ar[dl]_{\hat\gamma} & 0 \ar[r] & B \ar@{=}[dl] \ar[dd]^(.3){\beta} \ar[rr]^-{j} & & G_2 \ar@{=}[dl] \ar[dd] \ar[rrr] & & & G_1 \times_C C' \ar[dl] \ar@{=}[dd] \ar[rr]^-{p'} & & C' \ar[dl]_(.6){\gamma} \ar@{=}[dd] \ar[r] & 1 \\
X \ar[dd]_{\hat\beta} & 0 \ar[r] & B \ar[dd]_(.3){\beta} \ar[rr]_(.3){j} & & G_2 \ar[dd] \ar[rrr]_{\partial} & & & G_1 \ar@{=}[dd] \ar[rr]_(.7){p} & & C \ar@{=}[dd] \ar[rr] & & 1 \\
& 0 \ar[rr] & & B' \ar@{=}[dl] \ar[rr]_(.3){j'} & & B' \times^{B} G_2 \ar@{=}[dl] \ar@{-->}[rrr]_(.4){\partial'} & & & G_1 \times_C C' \ar[dl] \ar[rr]^(.7){p'} & & C' \ar[dl]^(.4){\gamma} \ar[r] & 1 \\
\beta_*X & 0 \ar[r] & B' \ar[rr]_-{j'} & & B' \times^{B} G_2 \ar[rrr] & & & G_1 \ar[rr]_-p & & C \ar[r] & 1
}
\]
We have to prove that $\gamma^*\beta_*X\cong(\gamma^*\beta)_*\gamma^*X$. It suffices to observe that if we define the dashed arrow $\partial'$ as the composite of the cokernel of $j'$ with the kernel of $p'$, the sequence $(p',\partial',j')$ gives a crossed extension which is at the same time the pullback of $\beta_*X$ along $\gamma$ and the push forward of $\gamma^*X$ along $\gamma^*\beta$. This can be checked by means of the description of cartesian and opcartesian liftings provided in the proof of Theorem \ref{thm:fib_opfib_XModGp}.
\end{proof}

Once we proved that crossed extensions of groups fit into our general scheme, several issues can be considered.

To start with, let us examine for $\cX=\XExt$,  the \emph{standard factorization} of diagram (\ref{diag:standard_factorization}) applied to a morphism $(\gamma,f_1,f_2,\beta)$ of crossed extensions as above. The result is the three-fold factorization represented below:
$$
\xymatrix{
X\ar[d]_{(1,1,f_2',\beta)}
&0\ar[r]
& B\ar[r]^j\ar[d]_\beta
& G_2\ar[r]^{\partial}\ar[d]^{f'_2}
& G_1\ar[r]^p\ar@{=}[d]
& C\ar[r]\ar@{=}[d]
& 1
\\
\beta_*X\ar[d]_{(1,\omega_1,\omega_2,1)}
&0\ar[r]
& B'\ar[r]^k\ar@{=}[d]
& P_2\ar[r]^{\delta}\ar[d]_{\omega_2}
& G_1\ar[r]^p\ar[d]^{\omega_1}
& C\ar[r]\ar@{=}[d]
& 1
\\
\gamma^*X'\ar[d]_{(\gamma,f_1',1,1)}
&0\ar[r]
& B'\ar[r]_{j'}\ar@{=}[d]
& G'_2\ar[r]_{\delta'}\ar@{=}[d]
& P_1\ar[r]_{q'}\ar[d]_{f'_1}
& C\ar[r]\ar[d]^{\gamma}
& 1
\\
X'\colon&
0\ar[r]
& B'\ar[r]_{j'}
& G'_2\ar[r]_{\partial'}
& G'_1\ar[r]_{p'}
& C'\ar[r]
& 1
}
$$
where:
$$
P_1=G_1'\times_{C'}C\,,\qquad P_2=B'\times^BG_2\,.
$$
The morphism has been factored into an opcartesian one, followed by a weak equivalence, followed by a cartesian one:
$$
(\gamma,f_1,f_2,\beta)= (\gamma,f_1',1,1)\cdot (1,\omega_1,\omega_2,1)\cdot(1,1,f_2',\beta)
$$
(recall that a morphism of crossed extensions $(\gamma,f_1,f_2,\beta)$ is a weak equivalence if $\beta=1$ and $\gamma=1$).

A classification theorem for (possibly weak) morphisms of crossed extensions, based on this three-fold factorization, is the main theme of the forthcoming article \cite{Schreier}.

Notice that condition (C) means precisely that in the above factorization applied to the morphism of crossed extensions
\[
\xymatrix{
\gamma^*X \ar[r]^{\hat\gamma} & X \ar[r]^{\hat\beta} & \beta_*X
}
\]
of Proposition \ref{prop:CforXextGp}, the corresponding middle arrow $(1,\omega_1,\omega_2,1)$ is in fact an isomorphism.

Another interesting investigation that can be based on our theory is a structural analysis of the category $\bar\cX=\overline \XExt$. Following the general case already developed in Section \ref{sec:bar_X},  objects are  connected components of  $\Pi$-fibers, i.e.\ equivalence classes of crossed extensions under a   \emph{similarity relation} defined in the same way as the one introduced by Yoneda.
Given two crossed extensions $X$ and $X'$ as above, an arrow $\bar X\to\bar X'$ can be described as a morphism of group-modules $\Pi X\to \Pi X'$, satisfying the condition that we are going to spell out. Let us be given a morphism of group-modules $(\gamma,\beta)$. Using the standard factorization determined by the fibration $(\underline{\ } )_0$, we immediately obtain  $(\gamma,\beta)= (\gamma,1)\cdot(1,\beta)$:
$$
\xymatrix{
M\ar[d]_{(1,\beta)}&\ar[d]_{1\times\beta}C\times B\ar[r]& B\ar[d]^{\beta}
\\
\gamma^*M'\ar[d]_{(\gamma,1)}&\ar[d]_{\gamma\times 1}C\times B'\ar[r]& B'\ar@{=}[d]
\\
M'&C'\times B'\ar[r]& B'
}
$$
Now, recall that any equivalence class of crossed extensions $\bar X$ can be identified with an element $\chi$ of the third cohomology group $H^3(C,B)$, where $B$ is the $C$-module determined by $X$. Then the group module morphism $(\gamma,\beta)$ gives a morphism $\bar X\to\bar X'$ in the sense of Section \ref{sec:bar_X} if and only if the following cohomological condition holds:
$$
\beta_*(\chi_X)=\gamma^*(\chi_{X'})\,.
$$
\begin{Remark}{\em
The set $\overline{\XExt}_{(C,B)}$, with $B$ a $C$-module, underlies a well known structure, called Baer sum, of abelian group (which turns out to be isomorphic to the third cohomology group of $C$ with coefficients in $B$, see \cite{Ger}). Changes of base determine group homomorphisms between such abelian groups.
}\end{Remark}

\subsection{Crossed $n$-fold extensions of groups} \label{sec:groups_n}

The interpretation of cohomology groups in terms of crossed extensions was extended from $n=2$ to all positive integers $n$ by Holt in \cite{Hol}, and independently by Huebschmann in \cite{Hue}. The crossed extensions they adopted are $n$-fold. We recall here their definition for the reader's convenience.

A crossed $n$-fold extension of groups is an exact sequence in $\Gp$
\begin{equation}
X\colon\qquad
\xymatrix@C=4ex{
0\ar[r]
& B\ar[r]^j
&G_n\ar[r]^{d_{n-1}}
&\cdots\ar[r]^{d_2}
& G_2\ar[r]^{\partial}
& G_1\ar[r]^p
& C\ar[r]
& 1}
\end{equation}
where
\begin{itemize}
\item[-]  $\partial\colon G_2\to G_1$ is a crossed module,
\item[-]  $B, G_n,\dots, G_3$ are $C$-modules,
\item[-]  $j,d_{n-1},\dots, d_2$ are $C$-equivariant.
\end{itemize}
Crossed extension morphisms are obviously described, and the category $\XExt^n$ is defined.
\begin{Theorem}
The commutative diagram of categories and functors below is a fiberwise opfibration in $\Fib(\Gp)$:
\begin{equation}
\begin{aligned}
\xymatrix{\XExt^n\ar[rr]^\Pi\ar[dr]_{\Pi_0}&&\Mod\ar[dl]^{(\underline{\ })_0}\\&\Gp}
\end{aligned}
\end{equation}
where $\Pi$ is the forgetful functor that sends a crossed $n$-fold extension $X$ as above to the $C$-module $B$, and $(\underline{\ })_0$ is the functor that assigns the group $C$ to the $C$-module $B$.
\end{Theorem}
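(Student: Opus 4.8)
The plan is to establish the three requirements for a fiberwise opfibration in $\Fib(\Gp)$ in exactly the same way as in the proof of Theorem~\ref{thm:fib_opfib_XModGp}: that $\Pi_0$ and $(\underline{\ })_0$ are fibrations, that $\Pi$ is a cartesian functor, and that for each group $C$ the restriction $\Pi_C\colon\XExt^n_C\to\Mod_C$ is an opfibration. The guiding observation is that both the cartesian and the opcartesian liftings modify only the two extreme terms of an $n$-fold sequence — the cokernel end $G_1\to C$ and the kernel end $B\to G_n$ respectively — while the whole internal chain, and in particular the crossed module $\partial\colon G_2\to G_1$, is left untouched whenever $n>2$. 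Thus the length of the sequence contributes only ``pass-through'' terms and the substance of the argument is that of the two-fold case.

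For the cartesian side, given a crossed $n$-fold extension $X'$ over $C'$ and a homomorphism $\gamma\colon C\to C'$, I would form the pullback $P_1=G_1'\times_{C'}C$ and define $\gamma^*X'$ by replacing the tail $G_1'\to C'$ with $P_1\to C$, keeping $B'\to G_n'\to\cdots\to G_2'$ unchanged and letting $G_2'\to P_1$ be the comparison $\langle\partial',0\rangle$. Exactness at $P_1$ and the induced crossed-module structure on $G_2'\to P_1$ are verified exactly as in Theorem~\ref{thm:fib_opfib_XModGp}, since this construction takes place entirely at the cokernel end; this yields a $\Pi_0$-cartesian lifting. The functor $(\underline{\ })_0\colon\Mod\to\Gp$ does not depend on $n$, so it is split as before, and the verification that $\Pi$ preserves cartesian arrows is likewise unchanged, the module datum $(C,B)$ transforming precisely as in the two-fold case.

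For the opcartesian side, fix $C$ and a $C$-module morphism $(1_C,\beta)\colon B\to B'$, and perform the push forward of $j\colon B\to G_n$ along $\beta$ as in \cite{pf}, producing $B'\times^BG_n$ together with a normal monomorphism $\rho\cdot\iota_1\colon B'\to B'\times^BG_n$ whose cokernel is isomorphic to $\coker(j)$. Here a simplification intervenes: for $n\geq 3$ both $B$ and $G_n$ are $C$-modules, so $j$ is a monomorphism in the abelian category $\Mod_C$ and its push forward coincides with the ordinary pushout in $\Mod_C$, the subtleties that forced the use of push forward in Theorem~\ref{thm:fib_opfib_XModGp} arising only at $n=2$. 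In either case $d_{n-1}\cdot j=0$, so $d_{n-1}\colon G_n\to G_{n-1}$ descends to a $C$-equivariant map $B'\times^BG_n\to G_{n-1}$, and exactness at the new term follows from the ``same cokernel'' property of the push forward (automatically, in the abelian case $n\geq 3$). The remaining maps $d_{n-2},\dots,\partial,p$, and with them all the crossed-module axioms, are inherited verbatim, so $\beta_*X$ is again a crossed $n$-fold extension and the morphism $X\to\beta_*X$ is assembled as in the two-fold proof.

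The one genuinely new point, and hence the main obstacle, is the exactness and descent at the pushed-forward term $B'\times^BG_n$ together with the compatibility of its $C$-action with $d_{n-1}$. For $n\geq 3$ this is immediate, since everything lives in the abelian category of $C$-modules, where pushouts preserve monomorphisms and exactness; for $n=2$ it is precisely the content already secured in Theorem~\ref{thm:fib_opfib_XModGp}. Once these are in place, the morphism $X\to\beta_*X$ is $\Pi_C$-opcartesian by the same universal-property argument as before, and the three requirements assemble to show that $\Pi$ is a fiberwise opfibration in $\Fib(\Gp)$.
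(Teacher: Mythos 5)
Your proposal is correct and follows the same route as the paper, whose own proof is only the one-line remark that the argument is an obvious variation of Theorem \ref{thm:fib_opfib_XModGp} (or can be deduced from the general semi-abelian case of Subsection \ref{sec:Moore}); you have simply made that variation explicit, with the liftings touching only the cokernel and kernel ends of the sequence. Your observation that for $n\geq 3$ the push forward of $j\colon B\to G_n$ degenerates to an ordinary pushout in the abelian category of $C$-modules is accurate and a useful clarification of why the extra length of the sequence adds nothing essential beyond the $n=2$ case.
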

\begin{proof}
The (sketch of the) proof is an obvious variation of the proof for the case of crossed extensions. Alternatively, it can be obtained from the more general case treated in Subsection \ref{sec:Moore}.
\end{proof}

Notice that, as for crossed extensions, also in this general case condition (C) for $\Pi$ holds for any $n$, so that $\Pi$ is actually an opfibration in $\Fib(\Gp)$.

Similarity relations can be set up just in the same way as we did for crossed extensions. Namely, we consider the equivalence relation generated by the morphisms
$$
(1,f_1,f_2,\dots, f_n,1)\,.
$$
Then, the category of the fibers of $\Pi$ is denoted by $\overline\XExt^n$
(notice that, $\XExt^2=\XExt$ and $\overline\XExt^2=\overline\XExt$).
Moreover, the constructions performed for crossed extensions can be easily adapted to crossed $n$-fold extensions, as for instance the three-fold factorization of a morphism and the description of morphisms in $\overline\XExt^n$ we just saw. 

Finally, we shall not analyze the degenerate case $\XExt^1=\mathbf{Opext}$. This reduces to the well known theory of group extensions with abelian kernel. Their (op)fibrational features are analyzed in classic manuals of homological algebra (see, for instance \cite[Chapter IV]{Homology}, Exercises 1--6).

\begin{Remark}{\em
Recall that, fixed a $C$-module $B$, the set of similarity classes of crossed $n$-fold extensions over the $C$-module $B$ gives an interpretation of the $(n+1)$-th  cohomology group $H^{n+1}(C,B)$ (see \cite{Homology}).
In our context, such groups (or better their underlying sets) are recovered as the fibers of $\bar\Pi$ arising from the comprehensive factorization:
$$
\begin{aligned}
\xymatrix{
\XExt^n(\Gp)\ar[r]^Q\ar[dr]_{\Pi_0}
&\overline{\XExt}^n(\Gp)\ar[r]\ar[d]_{\bar\Pi_0}\ar[r]^{\bar\Pi}
&\Mod(\Gp)\ar[dl]^{(\underline{\ })_0}\\&\Gp}
\end{aligned}
$$
}\end{Remark}

\subsection{Crossed $n$-fold extensions in strongly semi-abelian categories} \label{sec:Moore}

In this section, we show that the example of crossed $n$-fold extensions of groups can be extended all at once to many other concrete categories of non-abelian algebraic structures, working in a semi-abelian context \cite{JMT}.

In fact, we require \cC\ to be \emph{strongly semi-abelian}. Let us recall that a category is strongly semi-abelian (see \cite{BB}) if it is pointed, finitely complete, Barr-exact \cite{Barr}, it has coproducts and it is strongly protomodular. In our context, strong protomodularity can be characterized as follows:
for any morphism of split short exact sequences $(1_B,\varphi,\psi)$
$$
\xymatrix{
X\ar[r]^{\kappa}\ar[d]_{\psi}
&A\ar@<+.5ex>[r]^{\alpha}\ar[d]_{\varphi}
&B\ar@<+.5ex>[l]\ar@{=}[d]
\\
X'\ar[r]_{\kappa'}
&A'\ar@<+.5ex>[r]^{\alpha'}
&B\ar@<+.5ex>[l]
}
$$
\begin{itemize}
\item[(p)] If $\psi$ is an isomorphism, then $\varphi$ is an isomorphism as well.
\item[(sp)] If $\psi$ is a normal monomorphism, then  $\varphi\cdot\kappa=\kappa'\cdot\psi$ is a normal monomorphism as well.
\end{itemize}
The notion of strongly semi-abelian category encompasses several varietal examples: for instance, all the \emph{categories of groups with operations} in the sense of \cite{Orzech,Porter}, such as the categories of groups, of rings, of associative algebras, Poisson algebras, Lie and Leibniz algebras over a field, etc.\ \cite{MM2010}.

Strongly semi-abelian categories are a convenient setting for working with \emph{internal actions}. We recall their definition from \cite{BJK}. For a fixed object $B$ in \cC, let us consider the category $\Pt_B(\cC)=1_B\downarrow(\cC\downarrow B)$ of points over $B$, whose objects are pairs
\[
\xymatrix{
A \ar@<.5ex>[r]^p & B \ar@<.5ex>[l]^s
}
\]
with $p\cdot s = 1_B$. The kernel functor $\Ker_B\colon \Pt_B(\cC)\to \cC$, sending any such object to the kernel of $p$, is monadic and  the corresponding  monad, denoted by $B\flat(-)$, is defined, for any object $X$ of \cC, by the kernel diagram:
\[
\xymatrix{
B \flat X \ar[r]^-{\kappa_{B,X}} & B+X \ar[r]^-{[1,0]} & B \,.
}
\]
The $B\flat(-)$-algebras are called \emph{internal $B$-actions} \cite{BJK} and the category of such algebras is denoted by $\mathbf{Act}(B,-)$. For an action $\xi \colon B \flat X \to X$, the semi-direct product $X \rtimes_{\xi} B$ of $X$ with $B$ via $\xi$ was introduced in \cite{BJ} as the split epimorphism corresponding to $\xi$ via the canonical comparison equivalence $\Xi\colon\Pt_B(\cC) \to \mathbf{Act}(B,-)$. It can be computed explicitly (see \cite{MM2010}) via the coequalizer:
\[
\xymatrix{
B \flat X \ar@<.5ex>[r]^-{\kappa_{B,X}} \ar@<-.5ex>[r]_-{\iota_X\cdot\xi}
& B+X \ar[r]^-{q_\xi} & X \rtimes_{\xi} B \,,
}
\]
where $\iota_X$ is the canonical injection of $X$ into $B+X$.
Examples of internal actions follow:
\begin{itemize}
    \item the \emph{trivial action} of $B$ on $X$ is given by  the composite
        \[ \tau \colon \xymatrix{B \flat X \ar[r]^-{\kappa_{B,X}} & B+X \ar[r]^-{[0,1]} & X} \,; \]
    \item the \emph{conjugation action} of $X$ is given by the composite
        \[ \chi \colon \xymatrix{X \flat X \ar[r]^-{\kappa_{X,X}} & X+X \ar[r]^-{[1,1]} & X} \,; \]
    \item for any action $\xi\colon B\flat X \to X$ and any morphism $f\colon A \to B$, the composite
        \[ f^*(\xi) \colon \xymatrix{A \flat X \ar[r]^-{f \flat 1_X} & B \flat X \ar[r]^-{\xi} & X} \]
    defines an action, called the \emph{pullback action} of $\xi$ along $f$ (indeed, the above composition amounts to a pullback via the canonical comparison $\Xi$).
\end{itemize}

Internal actions allow, in a semi-abelian category \cC, to define internal crossed modules, introduced in \cite{J03} as a category equivalent to $\Cat(\cC)=\Gpd(\cC)$. When our base category is strongly semi-abelian, they can be described as follows (see \cite{mf_vdl,MM2010}).
An internal \emph{crossed module}  $(\partial,\xi)$ in $\cC$ is a map $\partial$ together with an action $\xi$
$$ \xymatrix{
	G_1 \flat G_2 \ar[r]^-{\xi} & G_2 \ar[r]^-{\partial} & G_1
} $$
such that the following two diagrams commute:
$$ \xymatrix{
	G_1 \flat G_2 \ar[r]^-{\xi} \ar[d]_-{1\flat \partial} & G_2 \ar[d]^-{\partial} \\
	G_1 \flat G_1 \ar[r]_-{\chi} & G_1
}\qquad
 \xymatrix{
	G_2 \flat G_2 \ar[r]^-{\chi} \ar[d]_{\partial\flat 1} & G_2 \ar@{=}[d] \\
	G_1 \flat G_2 \ar[r]_-{\xi} & G_2
} $$

The semi-direct product construction explained above serves as a main tool to produce the equivalence between internal crossed modules and internal categories. Every crossed module $(\partial \colon G_2 \to G_1, \xi)$ yields an internal category, whose underlying graph is
\[
\xymatrix{
G_2 \rtimes G_1 \ar@<.8ex>[r]^-{\pi_2} \ar@<-.8ex>[r]_-{[\partial,1\rangle} & G_1 \ar[l]|-{i_2}
}
\]
where $\pi_2$ and $i_2$ are the canonical projection and injection respectively and $[\partial,1\rangle$ is the unique arrow making the following diagram commute (where $i_1=\ker\pi_2$):
\[
\xymatrix{
G_2 \ar[r]^-{i_1} \ar[dr]_{\partial} & G_2 \rtimes G_1 \ar[d]^(.4){[\partial,1\rangle} & G_1 \ar[l]_-{i_2} \ar@{=}[dl] \\
& G_1
}
\]
The left hand side commutative triangle provides the construction of an internal crossed module starting from an internal category.

A large part of the theory of crossed modules of groups can be carried on internally in the strongly semi-abelian context (see \cite{Rodelo, Met2017}). For instance, one can prove that the kernel $B$ of $\partial$ is abelian (in fact, central in $G_2$) and that the cokernel $C$ of $\partial$ acts internally on $B$, so that $\partial$ induces on $B$ a structure of $C$-module. Following these lines,  we can adapt the following definition from \cite[Definition 8.1]{Rodelo}.

\begin{Definition} \label{def:X}
A crossed $n$-fold extension is an exact sequence in $\cC$
\begin{equation} \label{diag:x_n_ext}
X\colon\qquad
\xymatrix@C=4ex{
0\ar[r]
& B\ar[r]^j
&G_n\ar[r]^{d_{n-1}}
&\cdots\ar[r]^{d_2}
& G_2\ar[r]^{\partial}
& G_1\ar[r]^p
& C\ar[r]
& 1}
\end{equation}
where
\begin{itemize}
\item[-]  $\partial\colon G_2\to G_1$ is an internal crossed module,
\item[-]  $B, G_n,\dots, G_3$ are  internal $C$-modules,
\item[-]  $j,d_{n-1},\dots, d_2$ are $C$-equivariant, i.e.\ morphisms of internal $C$-actions.
\end{itemize}
\end{Definition}
\begin{Remark}\label{rk:acute}{\em
By saying that $d_2$ is $C$-equivariant we make a little abuse of language, since $C$ does not act on $G_2$, but on the image of $d_2$.
Let us notice  that a related issue will appear also in the case of associative algebras, see Definition~\ref{def:x_ext_alg}.
}\end{Remark}
Morphisms of crossed $n$-fold extensions are defined mimicking the group case, and they organize in the category $\XExt^n(\cC)$. Moreover, we denote by $\XExt(\cC)$, $\Mod(\cC)$ the categories of crossed $2$-fold extensions in $\cC$ and that of $\cC$-modules, respectively.

In \cite[Proposition 8.6]{Rodelo}, the authoress defines, for an object $C$ of $\cC$,  an $n$-order direction functor
$$
d_{n,C}\colon \xymatrix{\XExt^n_{C}(\cC)\ar[r]&\Mod_C(\cC)}
$$
that assigns to a crossed $n$-fold extension $X$ as in (\ref{diag:x_n_ext}), the $C$-module structure on $B$ induced by $X$.  In fact, $d_{n,C}$ is nothing but the restriction to fibers of a functor $\Pi\colon \XExt^n(\cC)\to \Mod(\cC)$, which  is actually a 1-morphism in $\Fib(\cC)$.
\begin{Theorem}\label{thm:fib_opfib_Moore}
Let $\cC$ be strongly semi-abelian.
The commutative diagram of categories and functors below is a fiberwise opfibration in $\Fib(\cC)$:
\begin{equation}
\begin{aligned}\label{diag:str_triangle}
\xymatrix{\XExt^n(\cC)\ar[rr]^\Pi\ar[dr]_{\Pi_0}&&\Mod(\cC)\ar[dl]^{(\underline{\ })_0}\\&\cC}
\end{aligned}
\end{equation}
where $\Pi$ is the forgetful functor that sends a crossed $n$-fold extension $X$ as above to the $C$-module $B$, and $(\underline{\ })_0$ is the functor that assigns to the $C$-module $B$, the object $C$.
\end{Theorem}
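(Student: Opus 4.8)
The plan is to mirror the four-step argument of Theorem~\ref{thm:fib_opfib_XModGp}, replacing each group-theoretic construction by its internal semi-abelian counterpart. Thus I would check in turn that (a)~$(\underline{\ })_0$ is a split fibration, (b)~$\Pi_0$ is a fibration, (c)~$\Pi$ is a cartesian functor, and (d)~for every object $C$ of $\cC$ the restriction $\Pi_C\colon\XExt^n_C(\cC)\to\Mod_C(\cC)$ is an opfibration; together these are exactly the data of a fiberwise opfibration in $\Fib(\cC)$. Steps (a)--(c) form the routine half. For (a), the cartesian lifting of a $C'$-module $(B',\xi')$ along $\gamma\colon C\to C'$ is the pullback action $\gamma^*(\xi')=\xi'\cdot(\gamma\flat 1_{B'})$ recalled above, and functoriality of $\flat$ yields $(\gamma'\cdot\gamma)^*=\gamma^*\cdot\gamma'^*$ and $1^*=\id$, so that $(\underline{\ })_0$ is indeed \emph{split}. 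For (b), the cartesian lifting of an extension $X'$ along $\gamma$ is obtained by pulling back the presentation $p'\colon G_1'\to C'$ along $\gamma$, forming $G_1'\times_{C'}C$ and leaving the kernel part untouched but now carrying the pulled-back $C$-module structures; one verifies that the comparison $G_2'\to G_1'\times_{C'}C$ inherits an internal crossed module structure and that the sequence stays exact. That $\Pi$ sends this lifting to the pullback action gives (c).

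The genuine content lies in (d), which I expect to be the main obstacle. Fix $C$, a crossed $n$-fold extension $X$ with direction the $C$-module $B$, and a $C$-module morphism $\beta\colon B\to B'$; the task is to construct a $\Pi_C$-opcartesian arrow $X\to\beta_*X$. As already observed for groups, one cannot take the pushout of $j\colon B\to G_n$ along $\beta$, since pushouts fail to preserve monomorphisms outside the abelian setting; instead I would invoke the internal \emph{push forward} of~\cite{pf}. Replacing $B$ by $B'$ and $G_n$ by the push-forward object $\beta_*G_n$ gives the candidate sequence, and the heart of the matter is to prove that the push-forward map $\beta_*j\colon B'\to\beta_*G_n$ is again a normal monomorphism with cokernel canonically isomorphic to that of $j$, so that the tail $d_{n-1},\dots,p$ reassembles into a bona fide crossed $n$-fold extension $\beta_*X$. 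This is precisely where strong semi-abelianness is used: condition~(sp) guarantees that the push forward of a normal monomorphism is normal, while the centrality of $B$ in $G_n$ ensures that the surrounding action descends, equipping $\beta_*G_n\to G_{n-1}$---and, when $n=2$, the crossed module $\beta_*G_2\to G_1$---with the required structure.

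I would isolate these structural facts into Lemma~\ref{lemma:pf_xext}, whose recipe produces $\beta_*X$ explicitly, and then verify opcartesianness directly from the universal property of the push forward: any morphism from $X$ to an extension over $C$ whose $B$-component factors through $\beta$ induces a unique comparison out of $\beta_*X$, which is automatically $\Pi_C$-vertical. Note that for $n\geq 3$ the map $j$ lives between $C$-modules, so in the abelian category $\Mod_C(\cC)$ the push forward collapses to an ordinary pushout and normality of $\beta_*j$ is automatic; the truly non-abelian difficulty therefore concentrates in the case $n=2$, where the crossed module structure on $\partial\colon G_2\to G_1$ must be transported. Coherence of the liftings across fibers raises no further issue, since compatibility of the chosen cartesian $\Pi_0$- and $(\underline{\ })_0$-liftings is already supplied by Lemma~\ref{lm:compatibility_of_P}; assembling (a)--(d) then yields the theorem.
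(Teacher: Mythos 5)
Your proposal matches the paper's proof essentially verbatim: the paper likewise sketches steps (a)--(c) by transporting the group-theoretic constructions (pullback actions for $(\underline{\ })_0$, pullback of $p'$ for $\Pi_0$-cartesian liftings), and concentrates the real content in the fiberwise opcartesian liftings, which it obtains via the internal push forward of \cite{pf} and spells out only for $n=2$ in Lemma~\ref{lemma:pf_xext}, exactly as you propose. Your observation that for $n\geq 3$ the push forward degenerates to a pushout in the abelian category of $C$-modules correctly identifies why the paper isolates the $n=2$ case.
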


\begin{proof}
We only sketch the proof, which follows the lines of the group theoretical case. The fact that $\Pi$ is a 1-morphism in $\Fib(\cC)$ is proved in the same way as in the proof of Theorem \ref{thm:fib_opfib_XModGp}, since also in $\XExt^n(\cC)$ cartesian morphisms are obtained by pullback.

The fact that, for any object $C$ in $\cC$, the restriction $\Pi_C=d_{n,C}$ to fibers is an opfibration can be deduced by the properties of the direction functor in Section 4.3 of \cite{Rodelo}. However, the constructions provided in [\emph{loc.cit.}] are not always explicit. Actually, a notion of push forward is available in semi-abelian categories (see \cite{pf} for a full treatment). This allows us to describe the opcartesian liftings by the same strategy as in the group case, by replacing group theoretical constructions with internal ones. For the reader's convenience, we provide an explicit and  self-contained proof for the case $n=2$ in the following lemma.
\end{proof}

\begin{Lemma}[Push forward of a crossed extension] \label{lemma:pf_xext}
Given a crossed extension
\[
X \colon \qquad
\xymatrix{
0 \ar[r] & B\ar[r]^j & G_2 \ar[r]^{\partial} & G_1 \ar[r]^p & C\ar[r] & 1
}
\]
and a $C$-module morphism $\beta\colon B \to B'$, there exists an opcartesian lifting of $\beta$ at $X$. In other words, the functor $d_{2,C}\colon\XExt^2_C(\cC)\to\Mod_C(\cC)$ is an opfibration.

We denote by $\beta_*X$ the codomain of this opcartesian morphism and we call it the \emph{push forward} of $X$ along $\beta$.
\end{Lemma}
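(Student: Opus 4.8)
The plan is to carry out the group-theoretic construction of Theorem~\ref{thm:fib_opfib_XModGp}(iv) internally, replacing the elementary quotient of $B'\times G_2$ by the internal push forward of \cite{pf}. Since $\partial$ is an internal crossed module, its kernel $j\colon B\to G_2$ is central in $G_2$, and $B'$, being an internal $C$-module, is abelian. I would aim to produce a crossed $2$-fold extension
\[
\beta_*X\colon\qquad 0\to B'\xrightarrow{\ j'\ }P_2\xrightarrow{\ \delta\ }G_1\xrightarrow{\ p\ }C\to 1
\]
together with a vertical comparison $\hat\beta=(1_C,1_{G_1},\rho\iota_2,\beta)\colon X\to\beta_*X$, and then to verify that $\hat\beta$ is $d_{2,C}$-opcartesian.

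To construct $P_2$, I would form the product $B'\times G_2$ with its canonical injections $\iota_1\colon B'\to B'\times G_2$ and $\iota_2\colon G_2\to B'\times G_2$. As $B'$ is abelian and $j$ is central, the two composites $\iota_1\beta$ and $\iota_2 j$ land in the centre of $B'\times G_2$; hence they commute and the subobject $\langle\beta,-j\rangle(B)$ is central, so normal. I set $\rho\colon B'\times G_2\to P_2$ to be the cokernel of $\langle\beta,-j\rangle$ --- equivalently, the coequalizer of $\iota_1\beta$ and $\iota_2 j$ --- and put $j'=\rho\iota_1$, exactly mirroring the group case.

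The crux of the proof, and the step I expect to be the main obstacle, is to show that $j'=\rho\iota_1$ is again a \emph{normal monomorphism} and that $\rho\iota_2$ induces an isomorphism between $\coker j'$ and $\coker j$. In the abelian (hence in Yoneda's) setting this is immediate, but here the plain pushout of the monomorphism $j$ along $\beta$ need not be a monomorphism --- which is precisely the reason the push forward is needed. This is where strong semi-abelianness enters: the general theory of internal push forwards in \cite{pf}, resting on the strong protomodularity condition~(sp), guarantees both that $j'$ is a normal monomorphism and that $\coker j'\cong\coker j\cong\mathrm{im}\,\partial$. Granting this, $\partial$ descends to $\delta\colon P_2\to G_1$ with $\delta\cdot\rho\iota_2=\partial$, $\ker\delta=\mathrm{im}\,j'$ and $\mathrm{im}\,\delta=\ker p$; equipping $P_2$ with the $G_1$-action assembled from the pullback action $p^*$ of the $C$-module $B'$ and the crossed-module action on $G_2$ (and checking it descends along $\rho$) makes $\delta$ an internal crossed module. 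Exactness of $\beta_*X$ then follows formally, so $\beta_*X$ is a genuine crossed $2$-fold extension with $d_{2,C}(\beta_*X)$ the $C$-module $B'$.

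Finally I would check opcartesianness, which is the formal part. Let $f\colon X\to X''$ be any morphism of $\XExt^2_C(\cC)$ with $d_{2,C}(f)=\mu\cdot\beta$ for a $C$-module map $\mu\colon B'\to B''$. The morphisms $j''\mu\colon B'\to G''_2$ and $f_2\colon G_2\to G''_2$ satisfy $f_2 j=j''\mu\beta$; as $B''$ is central in $G''_2$ their images commute, so the pair factors through the canonical regular epimorphism $B'+G_2\twoheadrightarrow B'\times G_2$ and then, coequalizing the antidiagonal, through $\rho$, yielding a unique $g_2\colon P_2\to G''_2$ with $g_2\rho\iota_1=j''\mu$ and $g_2\rho\iota_2=f_2$. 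One checks that $g=(1_C,f_1,g_2,\mu)$ is a morphism of crossed extensions satisfying $g\cdot\hat\beta=f$ and $d_{2,C}(g)=\mu$, and uniqueness is immediate since $\rho$ is epic. This is exactly the universal property required, so $\hat\beta$ is the opcartesian lifting of $\beta$ at $X$ and $d_{2,C}$ is an opfibration.
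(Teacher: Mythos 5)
Your construction of $\beta_*X$ coincides with the paper's: form $\langle\beta,-j\rangle\colon B\to B'\times G_2$ (using centrality of $j(B)$ in $G_2$ and abelianness of $B'$ to see that the relevant semidirect product is a direct product and that the antidiagonal is a normal monomorphism), take its cokernel $\rho$, set $j'=\rho\iota_1$, and invoke the push forward theory of \cite{pf} for the fact that $j'$ is again a normal monomorphism with the same cokernel as $j$. You also spell out the opcartesianness verification, which the paper leaves implicit; that part of your argument is correct. So the overall route is the same.

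The gap is concentrated in the parenthetical ``(and checking it descends along $\rho$) makes $\delta$ an internal crossed module'', which is precisely where almost all of the paper's work lies, and which your proposal treats as routine. To descend the $G_1$-action $\psi$ on $B'\times G_2$ to $B'\times^B G_2$ one must first prove that $\langle\beta,-j\rangle$ is equivariant for the $G_1$-actions. The first component reduces to equivariance of $\beta$, but the second component is the assertion that $j\colon B\to G_2$ is a morphism of $G_1$-actions from $p^*(\phi)$ to $\xi$, i.e.\ that the restriction of the crossed-module action $\xi$ to the central subobject $B$ agrees with the pullback along $p$ of the induced $C$-module structure $\phi$. In $\Gp$ this is a tautology, but internally it is not: the paper proves it (diagram (\ref{diag:-j})) by passing to the internal groupoid associated with $(\partial,\xi)$, identifying $\phi$ with the action coming from its direction, and exhibiting an explicit morphism of points over $G_1$. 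Moreover the descent itself is not a consequence of $\rho$ being a cokernel: one needs that quotients along equivariant normal subobjects inherit actions, which is Theorem 5.5 of \cite{Met2017} and uses strong protomodularity a second time. Finally, once the action $\xi'$ exists, the precrossed-module and Peiffer identities for $\delta$ still have to be verified; the paper does this by precomposing with the regular epimorphisms $1\flat q$ and $q\flat q$ and with product projections. None of these steps fails, but none is formal, and your proposal as written does not contain the ideas needed to carry them out.
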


\begin{proof}
Let us fix some notations.
\begin{itemize}
 \item $\xi\colon G_2\flat G_1 \to G_1$ is the action endowing $\partial$ of a crossed module structure;
 \item $\phi\colon C\flat B \to B$ is the action of $C$ on $B$ induced by the crossed module $(\partial,\xi)$ and making $B$ a $C$-module;
 \item $\phi'\colon C\flat B' \to B'$ is the action endowing $B'$ with a $C$-module structure.
\end{itemize}
The push forward of $X$ along $\beta$ can be obtained by means of the push forward construction provided in \cite{pf}, applied to the normal monomorphism $j$ and the $C$-module morphism $\beta$. Let us first observe that $j$ is a central monomorphism, being the kernel of a crossed module, hence the conjugation action of $G_2$ on $B$ is trivial. If we also regard $B'$ as a trivial $G_2$-module, then $\beta$ is clearly equivariant with respect to the $G_2$-actions and the semidirect product $B'\rtimes G_2$ is in fact a direct product. It is immediate then to see that the conditions for the push forward are fulfilled and we can describe explicitly the construction following the lines of Section 2.3 in \cite{pf}.

The precrossed module of Lemma 2.11 in \cite{pf} reduces, in our case, to the morphism $\langle \beta,-j\rangle \colon B \to B'\times G_2$, which is monomorphic (since its second projection $-j$ is), hence it is a kernel (notice that the existence of $-j$ is ensured by the centrality of $B$ in $G_2$). So, denoting by $q$ its cokernel, we have a short exact sequence
\[
\xymatrix{
B \ar[r]^-{\langle\beta,-j\rangle} & B'\times G_2 \ar[r]^-q & B'\times^B G_2\,.
}
\]
The composite $j'=q\cdot\langle 1,0\rangle\colon B'\to B'\times^B G_2$ provides the push forward of $j$ along $\beta$:
\[
\xymatrix{
B \ar[r]^-j \ar[d]_{\beta} \ar[dr]^{\langle\beta,-j\rangle} & G_2 \ar[d]^{\langle 0,1\rangle} \\
B' \ar[r]_-{\langle 1,0\rangle} \ar@{=}[d] & B'\times G_2 \ar[d]^q \\
B' \ar[r]_-{j'} & B'\times^B G_2
}
\]
As a consequence of Theorem 2.13 in \cite{pf}, $j'$ is a normal monomorphism and the morphism $(\beta,q\cdot\langle 0,1\rangle)$ of crossed modules induces an isomorphism on cokernels.

We can now compose the cokernel of $j'$ with the monomorphic part in the (regular epi, mono) factorization of $\partial$, to obtain a morphism $\partial'\colon B'\times^B G_2\to G_1$. It is immediate to observe that the following is then an exact sequence:
\[
\xymatrix{
0 \ar[r] & B \ar[r]^-{j'} & B'\times^B G_2 \ar[r]^-{\partial'} & G_1 \ar[r]^-p & C \ar[r] & 1\,.
}
\]
It remains to prove that $\partial'$ is a crossed module.

First of all, let us notice that $G_1$ acts on $B$ with the action
\[
\xymatrix{
p^*(\phi)\colon G_1\flat B \ar[r]^-{p\flat 1} & C\flat B \ar[r]^-{\phi} & B\,.
}
\]
Moreover, it acts on $B'$ with $p^*(\phi')$ and on $G_2$ with $\xi$, hence a compatible action of $G_1$ on the product $B'\times G_2$ is also defined (corresponding to the product in $\Pt_{G_1}(\cC)$). Let us denote the latter by $\psi$. It turns out that $\langle \beta,-j\rangle$ is equivariant with respect to the $G_1$-actions, i.e.\ the following diagram commutes:
\[
\xymatrix{
G_1\flat B \ar[r]^-{1\flat\langle \beta,-j\rangle} \ar[d]_{p^*(\phi)} & G_1\flat(B'\times G_2) \ar[d]^{\psi} \\
B \ar[r]^-{\langle \beta,-j\rangle} & B'\times G_2
}
\]
To prove this, it suffices to compose with product projections and verify that $\beta$ and $-j$ are both equivariant. Projecting on the first component, we get the diagram
\[
\xymatrix{
G_1\flat B \ar[r]^-{1\flat\beta} \ar[d]_{p\flat 1} & G_1\flat B' \ar[d]^{p\flat 1} \\
C\flat B \ar[r]^-{1\flat\beta} \ar[d]_{\phi} & C\flat B' \ar[d]^{\phi'} \\
B \ar[r]^-{\beta} & B'
}
\]
which commutes by equivariance of $\beta$. Projecting on the second component we get the following:
\begin{equation} \label{diag:-j}
\begin{aligned}
\xymatrix{
G_1\flat B \ar[r]^-{1\flat(-j)} \ar[d]_{p^*(\phi)} & G_1\flat G_2 \ar[d]^{\xi} \\
B \ar[r]^-{-j} & G_2
}
\end{aligned}
\end{equation}
We prove it commutes by means of the equivalences between actions and points, groupoids and crossed modules. Recall that the action $\phi$ is induced by the crossed module structure on $\partial$, i.e.\ it is nothing but the action induced by the \emph{direction} of the groupoid corresponding to $\partial$, whose construction is displayed below:
\[
\xymatrix@C=7ex{
B \rtimes(G_2\rtimes G_1) \ar@<1ex>[r]^-{1\rtimes \pi_2} \ar@<-1ex>[r]_-{1\rtimes[\partial,1\rangle} \ar@<1ex>[d]^{\pi_2} \ar@<-1ex>[d]_{[i_1j,1\rangle} & B\rtimes G_1 \ar@<1ex>[d]^{\pi_2} \ar@<-1ex>[d]_{\pi_2} \ar[r]^-{1\rtimes p} \ar[l]|-{1 \rtimes i_2} & B\rtimes C \ar@<1ex>[d]^{\pi_2} \ar@<-1ex>[d]_{\pi_2} \\
G_2 \rtimes G_1 \ar@<1ex>[r]^-{\pi_2} \ar@<-1ex>[r]_-{[\partial,1\rangle} \ar[u]|{i_2} & G_1 \ar[r]_-p \ar[l]|-{i_2} \ar[u]|{i_2} & C \ar[u]|{i_2}
}
\]
The leftmost vertical reflexive pair of arrows is in fact the kernel relation of the map $\langle [\partial,1\rangle,\pi_2\rangle\colon G_2\rtimes G_1\to G_1\times G_1$ induced by the projections of the groupoid $G_2\rtimes G_1$ over $G_1$. In other words, it is the equivalence relation associated with the normal subobject $\ker(d)\cap\ker(c)$ of $G_2\rtimes G_1$, which is nothing but $B$. The rightmost vertical reflexive pair is the direction of this groupoid, corresponding to the $C$-module $B$. It is easy to see now that the following is a morphism of points over $G_1$:
\[
\xymatrix@C=7ex{
B\rtimes G_1 \ar@<.5ex>[d]^{\pi_2} \ar[r]^-{[i_1j,1\rangle(1\rtimes i_2)} & G_2 \rtimes G_1 \ar@<.5ex>[d]^{\pi_2} \\
G_1 \ar@<.5ex>[u]^{i_2} \ar@{=}[r] & G_1 \ar@<.5ex>[u]^{i_2}
}
\]
Indeed, $[i_1j,1\rangle(1\rtimes i_2)i_2=i_2\pi_2 i_2=i_2\colon G_1\to G_2\rtimes G_1$ and $\pi_2[i_1j,1\rangle(1\rtimes i_2)=\pi_2(1\rtimes\pi_2)(1\rtimes i_2)=\pi_2\colon B\rtimes G_1 \to G_1$. Moreover, the induced restriction to the kernels of the split epimorphisms is $j$, since $[i_1j,1\rangle(1\rtimes i_2)i_1=[i_1j,1\rangle i_1=i_1j\colon B\to G_2\rtimes G_1$. Equivalently, we have a morphism of actions
\[
\xymatrix{
G_1\flat B \ar[r]^-{1\flat j} \ar[d]_{p^*(\phi)} & G_1\flat G_2 \ar[d]^{\xi} \\
B \ar[r]^-{j} & G_2
}
\]
yielding the desired (\ref{diag:-j}) by composition with the inversion map of $B$.

So far, we have proved that $\langle \beta,-j\rangle$ is equivariant with respect to the $G_1$-actions. Hence, by Theorem 5.5 in \cite{Met2017}, the quotient is also equivariant and $B'\times^B G_2$ is endowed with a $G_1$-action, that we denote by $\xi'$. We are going to see that this action gives rise to a crossed module structure on $\partial'$. Before, let us observe that $\partial'q=\partial\pi_2\colon B'\times G_2 \to G_1$, this equality being easily proved by composition with the jointly epimorphic pair of inclusions in the product $B'\times G_2$.

In order to prove that $(\partial',\xi')$ is a crossed module, we have to check the validity of precrossed module and Peiffer conditions, i.e.\ the commutativity of the following diagrams, where $\chi$ denotes conjugation action in both cases:
{\small
\[
\xymatrix{
G_1\flat(B'\times^B G_2) \ar[d]_{\xi'} \ar[r]^-{1\flat\partial'} \ar@{}[dr]|{(\mathrm{PX})} & G_1\flat G_1 \ar[d]^{\chi} & (B'\times^B G_2)\flat(B'\times^B G_2) \ar[d]_{\chi} \ar[r]^-{\partial'\flat 1} \ar@{}[dr]|{(\mathrm{PEI})} & G_1\flat(B'\times^B G_2) \ar[d]^{\xi'} \\
B'\times^B G_2 \ar[r]_-{\partial'} & G_1 & B'\times^B G_2 \ar@{=}[r] & B'\times^B G_2
}
\]
}

For the first one, we can compose with $1\flat q\colon G_1\flat(B'\times G_2)\to G_1\flat(B'\times^B G_2)$, which is a regular epimorphism since $q$ is. Then, recalling that $\partial'q=\partial\pi_2$, we get the following diagram
\[
\xymatrix{
G_1\flat(B'\times G_2) \ar[d]_{\psi} \ar[r]^-{1\flat\pi_2} \ar@{}[dr]|{(a)} & G_1\flat G_2 \ar[d]^{\xi} \ar[r]^-{1\flat\partial} \ar@{}[dr]|{(b)} & G_1\flat G_1 \ar[d]^{\chi} \\
B'\times G_2 \ar[r]_-{\pi_2} & G_2 \ar[r]_-\partial & G_1
}
\]
where $(a)$ commutes by definition of the action $\psi$ and the commutativity of $(b)$ is nothing but the precrossed module condition on $\partial$. So $(\mathrm{PX})$ commutes by cancellation.

For the second one, we compose with $q\flat q$ which is again a regular epimorphism. Then, again by means of the equality $\partial'q=\partial\pi_2$, we get the following:
\[
\xymatrix{
(B'\times G_2)\flat(B'\times G_2) \ar[d]_{\chi} \ar[r]^-{\pi_2\flat 1} \ar@{}[dr]|{(c)} &  G_2\flat(B'\times G_2) \ar[r]^-{\partial\flat 1} \ar[d]_{\langle 0,1\rangle^*\chi} \ar@{}[dr]|{(d)} & G_1\flat(B'\times G_2) \ar[d]_{\psi} \ar[r]^-{1\flat q} \ar@{}[dr]|{(e)} & G_1\flat(B'\times^B G_2) \ar[d]^{\xi'} \\
B'\times G_2 \ar@{=}[r] & B'\times G_2 \ar@{=}[r] & B'\times G_2 \ar[r]_-q & B'\times^B G_2
}
\]
where $(e)$ commutes by definition of the action $\xi'$, and $(c)$ commutes essentially by abelianness of $B'$. We finally prove the commutativity of $(d)$ by composing with product projections.

Projecting on the first component we get the following diagram:
\[
\xymatrix{
G_2\flat(B'\times G_2) \ar[r]^-{\partial\flat 1} \ar[d]_{\langle 0,1\rangle^*\chi} \ar@{}[dr]|{(d)} & G_1\flat(B'\times G_2) \ar[d]_{\psi} \ar[r]^-{1\flat \pi_1} \ar@{}[dr]|{(f)} & G_1\flat B' \ar[d]_{p^*\phi'} \ar[r]^-{p\flat 1} \ar@{}[dr]|{(g)} & C\flat B' \ar[d]^{\phi'} \\
B'\times G_2 \ar@{=}[r] & B'\times G_2 \ar[r]_-{\pi_1} & B' \ar@{=}[r] & B'
}
\]
which commutes if and only if the following commutes:
\[
\xymatrix{
G_2\flat(B'\times G_2) \ar[d]_{\langle 0,1\rangle^*\chi} \ar[r]^-{1\flat \pi_1} \ar@{}[dr]|{(h)} & G_2\flat B' \ar[d]_{\tau} \ar[r]^-{0\flat 1} \ar@{}[dr]|{(i)} & C\flat B' \ar[d]^{\phi'} \\
B'\times G_2 \ar[r]_-{\pi_1} & B' \ar@{=}[r] & B'
}
\]
where $\tau$ denotes the trivial action. But in the latter $(h)$ commutes since the conjugation action of one factor on another in a direct product is trivial, and $(i)$ commutes because $0^*(-)$ of any action is also trivial.

Projecting on the second component we get the following diagram
\[
\xymatrix{
G_2\flat(B'\times G_2) \ar[r]^-{\langle 0,1\rangle\flat 1} \ar[d]_{\langle 0,1\rangle^*\chi} & (B'\times G_2)\flat(B'\times G_2) \ar[d]_{\chi} \ar[r]^-{\pi_2\flat \pi_2} & G_2\flat G_2 \ar[d]_{\chi} \ar[r]^-{\partial\flat 1}  & G_1\flat G_2 \ar[d]^{\xi} \\
B'\times G_2 \ar@{=}[r] & B'\times G_2 \ar[r]_-{\pi_2} & G_2 \ar@{=}[r] & G_2
}
\]
where the left square commutes by definition, the middle one commutes because morphisms are equivariant with respect to conjugation actions, and the right square is nothing but the Peiffer condition on $\partial$.
\end{proof}

\begin{Remark}{\em
Let us observe that the statement of Lemma \ref{lemma:pf_xext} can be easily deduced from the more general result proved by Bourn in \cite[Proposition 10]{Bourn02} for internal groupoids. On the other hand, the proof provided in [{\em loc.cit.}], as well as its normalization given in \cite{Rodelo}, is hard to unfold if one wants to describe explicitly the opcartesian liftings. However, the main point of Lemma \ref{lemma:pf_xext} (and of its proof) is to give a recipe to determine the opcartesian liftings by means of a construction that can be effectively performed in several  categories of algebraic varieties, namely the \emph{push forward} referred above. For instance, in \cite{pf} we show how to construct the push forward in the category of rings, in the category of Lie algebras and in the category of  Leibniz algebras. In the next example, the case of associative algebras is analyzed.
}\end{Remark}

We shall not consider the degenerate case $n=1$ of extensions with abelian kernels. The proof of the opcartesian aspects can be deduced from \cite{BournJanelidze}. A proof that uses explicitly the push forward can be found in \cite[Proposition 5.1]{pf}.

As a matter of fact, likewise in the case of group extensions considered in Sections \ref{sec:groups} and \ref{sec:groups_n}, also in the strongly semi-abelian context the fibrewise opfibration $\Pi$ above enjoys condition (C), so that the following statement holds.

\begin{Proposition} \label{prop:Xext_opfib}
The morphism $\Pi$ in diagram (\ref{diag:str_triangle}) is an opfibration in $\Fib(\cC)$.
\end{Proposition}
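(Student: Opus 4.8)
The plan is to combine Theorem~\ref{thm:opfib_in_Fib_A} with Theorem~\ref{thm:fib_opfib_Moore}. Since the latter already exhibits $\Pi$ as a fiberwise opfibration in $\Fib(\cC)$, the equivalence (I)$\Leftrightarrow$(II) of Theorem~\ref{thm:opfib_in_Fib_A} reduces the statement to checking that $\Pi$ satisfies condition~(C). Unwinding the definitions, this means the following: given a crossed $n$-fold extension $X$ with direction $C$-module $B$, a morphism $\gamma\colon C'\to C$ in $\cC$, and a $C$-module morphism $\beta\colon B\to B'$, the canonical comparison
$$
\omega\colon (\gamma^{*}\beta)_{*}\gamma^{*}X\longrightarrow \gamma^{*}\beta_{*}X
$$
is an isomorphism, where $\gamma^{*}(-)$ denotes the $\Pi_0$-cartesian lifting (a pullback, as in the proof of Theorem~\ref{thm:fib_opfib_XModGp}) and $(-)_{*}$ the opcartesian lifting (the push forward of Lemma~\ref{lemma:pf_xext}).

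I would proceed exactly as in Proposition~\ref{prop:CforXextGp}, producing a single crossed $n$-fold extension that serves simultaneously as the pullback of $\beta_{*}X$ along $\gamma$ and as the push forward of $\gamma^{*}X$ along $\gamma^{*}\beta$. The decisive point is that the two liftings act on opposite ends of the exact sequence and hence commute: the cartesian lifting along $\gamma$ only alters the cokernel side, replacing $G_1\xrightarrow{p}C$ by its pullback $G_1\times_C C'\to C'$, whereas the push forward along $\beta$ only alters the kernel side, replacing $j\colon B\to G_n$ by the normal monomorphism $j'\colon B'\to B'\times^{B}G_n$ of Lemma~\ref{lemma:pf_xext}. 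For $n\geq 3$ the affected terms are disjoint; for $n=2$ the only shared datum is the crossed module $\partial$, whose source is changed by the push forward and whose target is changed by the pullback, so the constructions still do not interfere. Performing both in either order therefore yields the same exact sequence, whose boundary is recovered by composing the cokernel of the new $j'$ with the kernel of the new $p$, just as in the group case.

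The step I expect to be the real obstacle is the compatibility of the internal \emph{action} structures, which in the semi-abelian setting must be tracked much more carefully than the set-theoretic actions of the group case. Concretely, one has to verify that the $G_1$-action $\xi'$ on $B'\times^{B}G_n$, produced in Lemma~\ref{lemma:pf_xext} from the equivariance of $\langle\beta,-j\rangle$, pulls back along $\gamma$ to the same action one obtains by first forming $\gamma^{*}X$ and then pushing forward along $\gamma^{*}\beta$; equivalently, that the two resulting objects agree as crossed $n$-fold extensions and not merely as exact sequences in $\cC$. This equivariance bookkeeping reduces to the naturality of the push forward with respect to pullback of actions established in \cite{pf}, together with the observation that both relevant $G_1$-actions descend from the pullback action $p^{*}(\phi)$ attached to the untouched crossed module on $G_1$. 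Once this identification is secured, $\omega$ is a vertical comparison between two presentations of one and the same crossed extension, hence an isomorphism, so condition~(C) holds and $\Pi$ is an opfibration in $\Fib(\cC)$.
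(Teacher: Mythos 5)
Your proposal is correct and follows exactly the route the paper intends: reduce to condition (C) via Theorem~\ref{thm:opfib_in_Fib_A} together with Theorem~\ref{thm:fib_opfib_Moore}, and then verify (C) by exhibiting one crossed extension that is simultaneously the pullback of the push forward and the push forward of the pullback, as in Proposition~\ref{prop:CforXextGp}. The paper in fact gives no explicit proof here beyond the remark that condition (C) holds ``likewise'' as in the group case, so your account (in particular the attention to the compatibility of the internal action structures) is, if anything, more detailed than the paper's own.
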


Finally, Theorem \ref{thm:fib_opfib_Moore} above allows us to gather  in a unified framework various cohomology theories present in the literature, as the following remark explains.
\begin{Remark}{\em
Recall that, fixed a $C$-module $B$, the set of equivalence classes of crossed $n$-fold extensions over the $C$-module $B$ gives an interpretation of the $n$-th Bourn cohomology group $H^n(C,B)$ (\cite{bourn}, see also \cite{Rodelo} for the special case of Moore categories).
In our context, such groups (or better, their underlying sets) are recovered as the fibers of $\bar\Pi$ in the comprehensive factorization:
$$
\begin{aligned}
\xymatrix{
\XExt^n(\cV)\ar[r]^Q\ar[dr]_{\Pi_0}
&\overline{\XExt}^n(\cC)\ar[r]\ar[d]_{\bar\Pi_0}\ar[r]^{\bar\Pi}
&\Mod(\cC)\ar[dl]^{(\underline{\ })_0}\\&\cC}
\end{aligned}
$$
}\end{Remark}

\subsection{Case study: associative algebras and Hochschild cohomology} \label{sec:AssAlg}

Let $k$ be a field. The category $\AssAlg=k\text{-}\AssAlg$ of associative algebras over $k$ is a category of groups with operations (see \cite{Porter}). Hence, it is strongly semi-abelian, so that one can consider the specialization of diagram (\ref{diag:str_triangle}) to the fiberwise opfibration
\begin{equation}\label{diag:fib_opfib_AssAlg}
\begin{aligned}
\xymatrix{\XExt^n(\AssAlg)\ar[rr]^\Pi\ar[dr]_{\Pi_0}&&\Mod(\AssAlg)\ar[dl]^{(\underline{\ })_0}\\&\AssAlg}
\end{aligned}
\end{equation}
Different cohomology theories for associative algebras have been introduced by many authors, see for instance \cite{BauPi06}. In this section we briefly recall a notion of crossed $n$-fold extension that has been used for an interpretation of Hochschild cohomology of \emph{unital} associative algebras in \cite{BauMi02}. Then, we show how such a theory can be described by a fiberwise opfibered setting, and how this is related with the general setting of diagram  (\ref{diag:fib_opfib_AssAlg}).

\smallskip

Unital associative algebras (over $k$) are associative algebras endowed with multiplicative unit, and their morphisms are algebra morphisms that preserve such units. If we denote the category of unital associative algebras and their morphisms by $\AssAlg_1$, there is a non-full inclusion of categories
$$
\AssAlg_1\to\AssAlg\,.
$$
Since $\AssAlg$ is a category of groups with operations, we can make explicit the definition of a $B$-action on $A$. From \cite {Porter},  a $B$-action on $A$ ($B$-\emph{structure} in [{\em loc.cit.}]) is a pair of bilinear operations $B\times A\to A$ and $A\times B\to A$ (both denoted by $*$ as well as the algebra multiplication) satisfying:
\begin{itemize}
\item[(S1)] $(a*a')*b=a*(a'*b)\qquad b*(a*a')=(b*a)*a'$
\item[(S2)] $a*(b*b')=(a*b)*b'\qquad (b*b')*a=b*(b'*a)$
\item[(S3)] $(b*a)*b'=b*(a*b')\qquad a*(b*a')=(a*b)*a'$
\end{itemize}
for all $a,a'\in A$ and $b,b'\in B$.
As explained in \cite{Porter}, a $B$-action on $A$ corresponds to a split epimorphism over $B$ with kernel $A$.

On the other hand, although $\AssAlg_1$ is not a category of groups with operations (since it has two constants  $0\neq 1$) still it  makes sense to speak of $B$-actions for $B$ a unital algebra, but in this case, $B$ would act on a non-unital algebra $A$. Let us consider a split epimorphism with its section
$$
\xymatrix{
E\ar@<+.5ex>[r]^{p}&B\ar@<+.5ex>[l]^{s}
}
$$
in $\AssAlg_1$ and consider it as a split epimorphism in $\AssAlg$. This corresponds to a $B$-action on the ideal $A=\Ker(p)$, which is not, in general, an object of $\AssAlg_1$. Notice that, in  this case, the two bilinear operations corresponding to this kind of actions satisfy the following additional axioms:
\begin{itemize}
\item[(S4)] $1*a=a\,, \qquad a*1=a$\,.
\end{itemize}
We shall call such actions \emph{unital actions}.

Let $C$ be a unital associative  algebra. A \emph{bimodule} $M$ over $C$ is a $k$-vector space endowed with left and right unital $C$-actions such that for $c, c' \in C$ and $m \in  M$ we have $(c*m)*c' = c*(m*c')$.  Morphisms of bimodules are obviously defined, and they give rise to a category denoted by $\Bimod$.  We can state the following lemma (where a $0$-algebra is an algebra where the multiplication is trivial).

\begin{Lemma}\label{lm:bimod_to_mod}
If $M$ is a bimodule over a unital associative algebra $C$, then te corresponding $C$-action gives rise to a unital $C$-action on $M$, considered as a $0$-algebra in $\AssAlg$. As a consequence, we can identify the category $\Bimod$ of bimodules  with a non full subcategory of $\Mod(\AssAlg)$.
\end{Lemma}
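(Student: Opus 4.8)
The plan is to treat the two assertions separately. For the first, I would verify directly that the bimodule data satisfy the axioms (S1)--(S4) characterising a unital $C$-action in $\AssAlg$, once $M$ is equipped with the zero multiplication. For the second, I would assemble these verifications into an embedding $\Bimod\to\Mod(\AssAlg)$ and then exhibit a single morphism of $\Mod(\AssAlg)$ that is not a morphism of $\Bimod$.

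Write $b*a$ and $a*b$ for the left and right $C$-actions of $b\in C$ on $a\in M$, and recall that the internal multiplication $a*a'$ of two elements of $M$ is the $0$-algebra product, hence identically $0$. Under this observation the axioms split into two families. Both clauses of (S1) and the second clause of (S3), namely $a*(b*a')=(a*b)*a'$, become trivial, since in each the two sides are $0$-algebra products of two elements of $M$ and so vanish. The remaining clauses carry the genuine content and reduce to data already available: the two clauses of (S2) are exactly right and left associativity of the $C$-module $M$; the first clause of (S3), $(b*a)*b'=b*(a*b')$, is exactly the bimodule compatibility $(c*m)*c'=c*(m*c')$; and (S4) is the unitality of the two actions. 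Hence $M$, regarded as a $0$-algebra, carries a unital $C$-action and is an object of $\Mod(\AssAlg)$ whose base is $C$ read as a (non-unital) associative algebra.

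This pointwise construction defines a functor $\Phi\colon\Bimod\to\Mod(\AssAlg)$ sending a bimodule morphism $(\gamma,\beta)$ to itself, now reading $\gamma$ merely as a morphism of non-unital algebras; $\Phi$ is clearly faithful and injective on objects, so it identifies $\Bimod$ with a subcategory of $\Mod(\AssAlg)$. It remains to see that the subcategory is not full, for which it suffices to produce one morphism of $\Mod(\AssAlg)$ between objects in the image whose base component does not preserve the unit. Taking $C=k\times k$ and $M=C$ the regular bimodule, the map $\gamma\colon C\to C$, $(a,b)\mapsto(a,0)$, is a morphism of non-unital algebras, and $(\gamma,\gamma)$ is $C$-equivariant, hence a morphism of $\Mod(\AssAlg)$; but $\gamma(1,1)=(1,0)\neq(1,1)$, so $(\gamma,\gamma)$ is not a morphism of $\Bimod$.

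The computations involved are entirely routine; the only points requiring care are conceptual. The first is recognising that passing to the $0$-algebra is precisely what trivialises one family of action axioms, leaving the other family to encode the module and bimodule laws. The second, and the crux of the non-fullness claim, is that a morphism of $\Mod(\AssAlg)$ imposes no unit-preservation condition on its base-algebra component, whereas a morphism of $\Bimod$ does.
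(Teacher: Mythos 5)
Your proof is correct and follows essentially the same route as the paper's: both identify which of the action axioms (S1)--(S3) become identically zero on a $0$-algebra (both clauses of (S1) and the second clause of (S3)) and observe that the surviving clauses reduce exactly to the left-, right- and bi-module laws, with (S4) giving unitality. The explicit witness to non-fullness ($C=k\times k$ with $\gamma(a,b)=(a,0)$) is a welcome addition that the paper leaves implicit, its proof resting only on the remark that abelian objects of $\AssAlg$ are $k$-vector spaces with trivial multiplication.
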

\begin{proof}
The proof follows from the description of the unital actions above and from the fact that a $C$-action on a $0$-algebra has both equations in (S1) and the second one  of (S3) identically zero,  while both equations in (S2) and the first one of (S3) give right-, left- and bi- module conditions respectively.  The last statement follows from the fact that abelian objects in $\AssAlg$ are precisely $k$-vector spaces endowed with a trivial multiplication.
\end{proof}
Let us remark that, in general, a $C$-action on a $0$-algebra $M$ does not give a bimodule, unless the action is unital.

Let $B$ be a unital associative algebra. A \emph{crossed bimodule} is a morphism of $B$-bimodules
$$
\partial\colon A\to B\,,
$$
(where $B$ is considered as a $B$-bimodule via the multiplication in $B$) satisfying  $$\partial(a)*a'=a*\partial(a')\,,$$ for all $a,a'\in A$.
Crossed bimodules are called just crossed modules in \cite[Definition 5.9]{BauMi02}.
It is possible to show (see \cite{BauPi06}) that the product defined by $a_1*a_2=\partial(a_1)*a_2$ gives $A$ a structure of a not (necessarily) unital associative algebra, in such a way that $\partial$ is a morphism in $\AssAlg$.
As a matter of fact, any crossed bimodule $\partial$ as above determines a crossed module in $\AssAlg$. This is easily proved by specializing the description of crossed modules in \cite{Porter} to the case of associative algebras as follows. A morphism of associative algebras $\partial\colon A\to B$, together with a $B$-action on $A$ is a crossed module if it satisfies (compare with conditions (iii) and (iv) in \cite[Proposition 2]{Porter}):
\begin{itemize}
\item[(XM1)] \ $\partial(a_1)*a_2=a_1*a_2=a_1*\partial(a_2)$\,,
\item[(XM2)] \ $\partial(b*a)=b*\partial(a)$ and\/  $\partial(a*b)=\partial(a)*b$\,.
\end{itemize}
Indeed, given a crossed bimodule, conditions (XM2) follow from left/right-module axioms, and conditions (XM1) come from the definition of the multiplication on the $B$-bimodule $A$ given above.

Let us denote by $\XBimod$ the category of crossed bimodules of associative unital algebras, where a morphism between two crossed bimodules
$$\partial\colon A\to B\qquad \text{and}\qquad   \partial'\colon A'\to B'\,,$$ is a morphism  $(\beta,\alpha)$ of bimodules such that $\beta\colon B\to B'$ is also a morphism of unital algebras. By the discussion above, $\XBimod$ can be identified with a non full subcategory of $\XMod(\AssAlg)$.

Our discussion plainly extends to crossed extensions. Indeed, given a crossed bimodule $\partial \colon A\to B$, one can form the following exact sequence of $k$-vector spaces
\begin{equation}\label{diag:XExtAssAlg}
\xymatrix{
0\ar[r]
&M\ar[r]^j
&A\ar[r]^\partial
&B\ar[r]^p
&C\ar[r]
&0
}
\end{equation}
and prove the following facts \cite[Lemma 3.2.1]{BauPi06}:
\begin{itemize}
\item $C$ has a natural structure of associative algebra with unit, that makes $p$ a morphism in $\AssAlg_1$,
\item the image of $\partial$ is an ideal of $B$, i.e.\ the kernel of $p$ in $\AssAlg$,
\item $M*A=0=A*M$, i.e.\ $M$, considered as a subalgebra of $A$ in $\AssAlg$, is central in $A$,
\item the action of $B$ on $A$ determines a well-defined unital action of $C$ on $M$, making $M$ a $C$-bimodule.
\end{itemize}
We are led to the following definition.
\begin{Definition}\label{def:XExtAssAlg}
Given a unital associative algebra $C$ and a $C$-bimodule $M$, a crossed biextension of $C$ by $M$ is an exact sequence
$$\xymatrix{
0\ar[r]
&M\ar[r]^j
&A\ar[r]^\partial
&B\ar[r]^p
&C\ar[r]
&0
}
$$
in $k$-$\Vect$, where $\partial$ is a crossed bimodule, $p$ is a morphism of unital algebras and the action of $C$ on $M$ determined by the crossed bimodule structure of $\partial$ coincides with the given $C$-bimodule structure on $M$.

Morphisms of crossed biextensions are obviously defined, and the category of crossed biextensions will be denoted by $\XBiext$.
\end{Definition}

Let us observe that, since any crossed bimodule determines a crossed module in $\AssAlg$ as described above and the exactness of a sequence in $\AssAlg$ can be checked in $k$-$\Vect$, any crossed biextension can be seen also as a crossed extension in $\AssAlg$.

Notice that in  \cite{BauPi06} crossed biextensions are called  \emph{crossed extensions}, but here we prefer to avoid using the same terminology since we want to keep it for crossed extensions of not (necessarily) unital algebras.

\begin{Theorem} \label{thm:fib_opfib_AssAlg}
The fiberwise opfibration in $\Fib(\AssAlg)$ determined by Theorem \ref{thm:fib_opfib_Moore}
\begin{equation}\label{diag:up}
\begin{aligned}
\xymatrix{\XExt(\AssAlg)\ar[rr]^\Pi\ar[dr]_{\Pi_0}&&\Mod(\AssAlg)\ar[dl]^{(\underline{\ })_0}\\&\AssAlg}
\end{aligned}
\end{equation}
restricts to a fiberwise opfibration in $\Fib(\AssAlg_1)$:
\begin{equation}\label{diag:down}
\begin{aligned}
\xymatrix{\XBiext\ar[rr]^\Pi\ar[dr]_{\Pi_0}&&\Bimod\ar[dl]^{(\underline{\ })_0}\\&\AssAlg_1}
\end{aligned}
\end{equation}
\end{Theorem}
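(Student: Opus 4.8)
The plan is to show that each ingredient of the fiberwise opfibration structure on diagram (\ref{diag:up}), provided by Theorem \ref{thm:fib_opfib_Moore}, restricts along the three non-full inclusions $\AssAlg_1\hookrightarrow\AssAlg$, $\Bimod\hookrightarrow\Mod(\AssAlg)$ and $\XBiext\hookrightarrow\XExt(\AssAlg)$. Since these inclusions are compatible with $\Pi$, $\Pi_0$ and $(\underline{\ })_0$, so that diagram (\ref{diag:down}) is literally the restriction of diagram (\ref{diag:up}), it suffices to verify that the chosen cartesian and opcartesian liftings, when performed on objects and arrows of the subcategories, again land in the subcategories. Exactness of the resulting sequences is automatic, since exactness in $\AssAlg$ is tested in $k\text{-}\Vect$.

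First I would treat the two fibrations. The cartesian liftings of $(\underline{\ })_0$ are pullback actions (precomposition): for a unital morphism $\gamma\colon C\to C'$ and a unital $C'$-bimodule $M'$, the induced $C$-action $\gamma^*(\xi')$ is again unital because $\gamma(1_C)=1_{C'}$ acts as the identity; hence $\gamma^*M'\in\Bimod_C$ by Lemma \ref{lm:bimod_to_mod}, and $(\underline{\ })_0$ restricts to a (split) fibration on $\Bimod$. For $\Pi_0$, the cartesian lifting of $\gamma$ at a crossed biextension $X'$ is obtained by pulling back the map $p'\colon B'\to C'$ along $\gamma$, as in the proof of Theorem \ref{thm:fib_opfib_XModGp}. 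The pullback $B'\times_{C'}C$ is unital, with unit $(1_{B'},1_C)$ (well defined since $p'(1_{B'})=1_{C'}=\gamma(1_C)$), and the projection onto $C$ is unital; moreover the comparison into $B'\times_{C'}C$ inherits a crossed \emph{bi}module structure whose $B'\times_{C'}C$-action is unital, being pulled back from the unital $B'$-action on the domain. Thus $\gamma^*X'\in\XBiext$ and $\Pi_0$ restricts to a fibration on $\XBiext$. Since $\Pi$ sends these pullback liftings to the corresponding pullback liftings, cartesianness of $\Pi$ in $\Fib(\AssAlg_1)$ follows at once from the large case.

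It remains to produce the fiberwise opcartesian liftings, which is the heart of the matter. Fix a unital algebra $C$, a crossed biextension $X\colon 0\to M\to A\xrightarrow{\partial}B\xrightarrow{p}C\to 0$ with direction the $C$-bimodule $M$, and a morphism $\beta\colon M\to M'$ in $\Bimod_C$. By Lemma \ref{lemma:pf_xext} the push forward $\beta_*X$ exists in $\XExt_C(\AssAlg)$, with opcartesian lifting $\hat\beta\colon X\to\beta_*X$; I must check that $\beta_*X$ is in fact a crossed biextension. The construction leaves $C$, the map $p$ and the algebra $B$ untouched, so by Definition \ref{def:XExtAssAlg} the only point to verify is that $\partial'\colon M'\times^{M}A\to B$ is a crossed \emph{bi}module, that is, that the induced $B$-action on the push-forward algebra $M'\times^{M}A$ is unital. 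This is exactly where the hypotheses enter: $B$ acts unitally on $A$ because $\partial$ is a crossed bimodule, and $B$ acts unitally on $M'$ through $p$ and the given unital $C$-bimodule structure (recall $p(1_B)=1_C$); hence $B$ acts unitally on the product $M'\times A$, and this unitality descends along the coequalizer $q$ to the quotient $M'\times^{M}A$. Consequently $\partial'$ is a crossed bimodule, the direction of $\beta_*X$ is $M'$ with its given unital structure, and $\beta_*X\in\XBiext_C$.

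Finally, to see that $\hat\beta$ is opcartesian already for the restricted functor $\Pi_C\colon\XBiext_C\to\Bimod_C$, I would take any $\xi\colon X\to Y$ in $\XBiext_C$ with $\Pi_C\xi=\mu'\cdot\beta$ and invoke the opcartesian property in the large fiber to obtain the unique $\xi'\colon\beta_*X\to Y$ with $\xi'\hat\beta=\xi$ and $\Pi\xi'=\mu'$. Because the push forward leaves $B$ unchanged, the $B$-component of $\hat\beta$ is the identity, so the $B$-component of $\xi'$ equals that of $\xi$ and is therefore a morphism of unital algebras; combined with $\Pi\xi'=\mu'$ being a morphism of bimodules, this shows that $\xi'$ is a morphism of $\XBiext_C$ lying over $\mu'\in\Bimod_C$, so the universal property restricts. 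I expect the genuine obstacle to be the unitality argument of the preceding paragraph: one must trace precisely how the $B$-action on $M'\times^{M}A$ is assembled, in the push-forward construction of Lemma \ref{lemma:pf_xext}, from the actions on $M'$ and on $A$, and confirm that unitality survives passage to the quotient $q$. The remaining verifications are routine restrictions of the strongly semi-abelian case.
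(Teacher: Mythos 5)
Your proposal is correct, but it organizes the decisive step differently from the paper. You treat the statement literally as a restriction problem: take the opcartesian lifting $\hat\beta\colon X\to\beta_*X$ already supplied in $\XExt_C(\AssAlg)$ by Theorem \ref{thm:fib_opfib_Moore} and Lemma \ref{lemma:pf_xext}, and check that the one axiom separating a crossed biextension from a crossed extension in $\AssAlg$ --- unitality of the $B$-action on $M'\times^{M}A$ --- survives, because the product action on $M'\times A$ is unital and the quotient map is an equivariant surjection. The paper goes the other way: it rebuilds the lifting from scratch by computing the pushout of $j$ along $\mu$ in $k$-$\Vect$, writing the $B$-bimodule structure on $M'+_M A$ by explicit formulas on equivalence classes, verifying well-definedness and the crossed bimodule identities element by element, and only at the end identifying the result with the push forward in $\AssAlg$ via the characterization in \cite[Theorem 2.13]{pf}. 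Your route is shorter and makes the word ``restricts'' in the statement do the work, at the cost of leaning on the identification of crossed bimodules with crossed modules in $\AssAlg$ carrying unital actions (which the paper establishes in the discussion preceding the theorem); the paper's route deliberately produces an explicit recipe for the opcartesian lifting, its stated aim being to fill in details missing from \cite{BauMi02}. Your handling of the two fibrations, of the cartesian liftings by pullback, and of the restricted universal property (the $B$-component of the comparison $\xi'$ equals that of $\xi$, hence is unital) agrees with what the paper records or leaves as straightforward, and I see no gap in the unitality argument you correctly single out as the crux.
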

\begin{proof}
By the identifications described in the discussion above, diagram  (\ref{diag:down}) is consistently a restriction of diagram (\ref{diag:up}). Moreover, it is easy to show that it is a diagram in  $\Fib(\AssAlg_1)$, since the $\Pi_0$-cartesian morphisms in $\XBiext$ are precisely those morphisms of $\XBiext$ which are $\Pi_0$-cartesian in $\XExt(\AssAlg)$, i.e.\ those of the form:
$$
\xymatrix@!C=6ex{
0\ar[r]
&M\ar[r]\ar@{=}[d]
&A\ar[r]\ar@{=}[d]
&B\times_CC'\ar[r]\ar[d]\ar@{}[dr]|{\text{p.b.}}
&C'\ar[r]\ar[d]^{\gamma}
&0
\\
0\ar[r]
&M\ar[r]_j
&A\ar[r]_{\partial}
&B\ar[r]_p
&C\ar[r]
&0}
$$
with $p$ and $\gamma$ in $\AssAlg_1$. The rest of the proof is straightforward.

Furthermore it is a fiberwise opfibration, as one could deduce from \cite[Proposition 4.6]{BauMi02}. However, it is worth analyzing the construction of opcartesian liftings, in order to fill some  missing details in the proof provided in [\emph{loc.cit.}].
Let us consider a crossed biextension $(p,\partial,j)$ as above, together with a $C$-bimodule morphism $\mu\colon M\to M'$. First we compute the push out of $j$ along $\mu$ in the category of $k$-vector spaces, thus obtaining the following morphism of exact sequences in $k\text{-}\mathbf{Vect}$:
$$
\xymatrix@!C=6ex{
0\ar[r]
&M\ar[r]^j\ar[d]_{\mu}\ar@{}[dr]|{\text{p.o.}}
&A\ar[r]^\partial\ar[d]^\alpha
&B\ar[r]^p\ar@{=}[d]
&C\ar[r]\ar@{=}[d]
&0
\\
0\ar[r]
&M'\ar[r]_-{j'}
&M'+_MA\ar[r]_-{\partial'}
&B\ar[r]_p
&C\ar[r]
&0}
$$
Now, since $k\text{-}\mathbf{Vect}$ is an abelian category, the vector space $M'+_MA$ can be described as the space of equivalence classes of pairs $(m',a)\in M'\times A$, with $ (m'_2,a_2)\in[m'_1,a_1]$ if $a_2-a_1\in M$ and $\mu(a_2-a_1)=m'_2-m'_1$. In fact, $M'+_MA$ is a $B$-bimodule. The left/right module structure is well-defined by:
$$
b*[m',a]=[p(b)*m',b*a]\,,\qquad [m',a]*b=[m'*p(b),a*b]\,.
$$
Indeed, for $(m'_2,a_2)\in[m'_1,a_1]$ and any $b\in B$, one has
$$
b*[m'_2,a_2]\eq{def}[p(b)*m'_2,b*a_2]=[p(b)*(m'_2-m'_1),b*(a_2-a_1)]+[p(b)*m'_1,b*a_1]=
$$
$$
=[p(b)*\mu(a_2-a_1),b*(a_2-a_1)]+[p(b)*m'_1,b*a_1]=
$$
$$
=[\mu(p(b)*(a_2-a_1)),b*(a_2-a_1)]+[p(b)*m'_1,b*a_1]=
$$
$$
=[0,0]+[p(b)*m'_1,b*a_1]=[p(b)*m'_1,b*a_1]\eq{def}b*[m'_1,a_1]
$$
where in the fourth equality we have used the equivariance of $\mu$.
Similarly for the right action. Moreover,
$$
b_1*([m',a]*b_2)=b_1*[m'*p(b_2),a*b_2]=[p(b_1)*(m'*p(b_2)),b_1*(a*b_2)]=
$$
$$
=[(p(b_1)*m')*p(b_2),(b_1*a)*b_2)]=[p(b_1)*m',b_1*a]*b_2=(b_1*[m',a])*b_2
$$
where in the third equality we have used that $A$ is a $B$-bimodule and $M'$ is a $C$-bimodule.

The $k$-linear map $\partial'$ is determined by the universal property of coproducts in $k$-$\mathbf{Vect}$: $\partial'([m',a])=\partial(a)$.
In fact, $\partial'$ is a crossed bimodule, as the following calculations show. It is a morphism of $B$-bimodules:
$$
\partial'(b*[m',a])=\partial'([p(b)*m',b*a])=
\partial(b*a)=b*\partial(a)=b*\partial'([m',a])
$$
Moreover,
$$
\partial'([m'_1,a_1])*[m'_2,a_2]=\partial(a_1)*[m'_2,a_2]=[(p\partial)(a_1)*m'_2,\partial(a_1)*a_2]=
$$
$$
=[0*m'_2,\partial(a_1)*a_2]=[0,\partial(a_1)*a_2]=[0,a_1*\partial(a_2)]=[m'_1*0,a_1*\partial(a_2)]=
$$
$$
=[m'_1*(p\partial)(a_2),a_1*\partial(a_2)]=[m'_1,a_1]*\partial(a_2)=[m'_1,a_1]*\partial'([m'_2,a_2])
$$
where the central equality follows from the crossed bimodule condition on $\partial$.

Now it is routine to check that $(p,\partial',j')$ is a crossed biextension, and that $(1_C,1_B,\alpha,\mu)$ is a morphism of crossed biextensions.

Moreover, $(1_C,1_B,\alpha,\mu)$ is opcartesian over $\mu$ in $\XBiext$. This can be checked directly, or by the fact that it is opcartesian also in $\XExt(\AssAlg)$. With reference to the general case of strongly semi-abelian categories, one observes that the pushout computed in $k$-$\mathbf{Vect}$ in the above diagram, turns out to be a push forward when the diagram is considered in  $\AssAlg$. This can be proved by means of the characterization of push forwards given in \cite[Theorem 2.13]{pf}: the square $\alpha\cdot j=j'\cdot \mu$ presents $j'$ as the push forward of $j$ along $\mu$
since $(\alpha,\mu)$ induces an isomorphism between the cokernels of $j$ and $j'$ (both are kernels of $p$) and a regular epimorphism between the kernels (in fact, the last is an isomorphism too, since $j$ and $j'$ are monomorphisms).
\end{proof}

As a matter of fact, \cite{BauMi02} treats the more general case of crossed $n$-fold biextensions. It actually fits in our general theory as well as the case of crossed biextensions we have just seen, i.e. with $n=2$.

Here we only quote the definition \cite[Definition 4.1]{BauMi02} (compare with Definition \ref{def:X}) and state the following theorem, the proof being essentially the same as in the case $n=2$.

\begin{Definition}\label{def:x_ext_alg}
Given an associative unital algebra $C$, and a $C$-bimodule $M$, a crossed $n$-fold biextension of $C$ by $M$ is an exact sequence in $k$-Vect:
\begin{equation}\label{diag:x_n_biext}
X\colon\qquad\xymatrix@C=4ex{
0\ar[r]
& M\ar[r]^j
&A_n\ar[r]^{d_{n-1}}
&\cdots\ar[r]^{d_2}
& A_2\ar[r]^{\partial}
& B\ar[r]^p
& C\ar[r]
& 0}
\end{equation}
where
\begin{itemize}
\item[-]  $\partial\colon A_2\to B$ is a crossed bimodule,
\item[-]  $p$ is a morphism of unital commutative algebras,
\item[-]  $M, A_n,\dots, A_3$ are  $C$-bimodules,
\item[-]  $j,d_{n-1},\dots, d_2$ are  $C$-equivariant.
\end{itemize}
\end{Definition}
\noindent According to Remark \ref{rk:acute}, the last condition on $d_2$ means that $$d_2(c*a_3)=c*d_2(a_3)\,,\qquad d_2(a_3*c)=d_2(a_3)*c\,,$$ for $c\in C$ and $a_3\in A_3$. Notice that, right sides of both equations make sense, since $C$ acts on $d_2(A_3)=\Ker(\partial)$.
In the case of crossed biextensions, $d_2=j$, and the condition is trivially verified.

An obvious generalization of the previous discussion leads to the definition of the category $\XBiext^n$ and to the following result.

\begin{Theorem} \label{thm:fib_opfib_AssAlg_n}
The diagram below is a fiberwise opfibration in $\Fib(\AssAlg_1)$:
\begin{equation}\label{diag:down_n}
\begin{aligned}
\xymatrix{\XBiext^n\ar[rr]^\Pi\ar[dr]_{\Pi_0}&&\Bimod\ar[dl]^{(\underline{\ })_0}\\&\AssAlg_1}
\end{aligned}
\end{equation}
where the functor $\Pi_0$ and $(\underline{\ })_0$ are the restrictions of the corresponding ones as in Theorem \ref{thm:fib_opfib_Moore}, with $\cC=\AssAlg$.
\end{Theorem}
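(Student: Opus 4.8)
The plan is to obtain the result by restriction from the strongly semi-abelian case $\cC=\AssAlg$ established in Theorem~\ref{thm:fib_opfib_Moore}, following verbatim the strategy used for $n=2$ in Theorem~\ref{thm:fib_opfib_AssAlg}. By the identifications recalled in the discussion preceding Definition~\ref{def:x_ext_alg} (a crossed bimodule is a crossed module in $\AssAlg$ via conditions (XM1)--(XM2); by Lemma~\ref{lm:bimod_to_mod} a $C$-bimodule is a unital $C$-action on a $0$-algebra; and exactness in $k$-$\Vect$ coincides with exactness in $\AssAlg$), the triangle~(\ref{diag:down_n}) sits inside the triangle~(\ref{diag:str_triangle}) as a restriction along the non-full inclusions $\XBiext^n\hookrightarrow\XExt^n(\AssAlg)$, $\Bimod\hookrightarrow\Mod(\AssAlg)$ and $\AssAlg_1\hookrightarrow\AssAlg$. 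It therefore suffices to verify, exactly as for $n=2$, two facts: that~(\ref{diag:down_n}) is a diagram in $\Fib(\AssAlg_1)$, and that opcartesian liftings in the fibers can be chosen inside $\XBiext^n$.

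For the first point I would argue that the $\Pi_0$-cartesian liftings computed in $\XExt^n(\AssAlg)$ already live in $\XBiext^n$. Given a unital morphism $\gamma\colon C'\to C$, the cartesian lifting along $\gamma$ of a crossed $n$-fold biextension~(\ref{diag:x_n_biext}) modifies only its last two terms, replacing $B\to C$ by the pullback $B\times_C C'\to C'$ and restricting the $C$-bimodule structures on $M,A_n,\dots,A_3$ along $\gamma$, while leaving $A_n,\dots,A_2$ and $d_{n-1},\dots,\partial$ unchanged. Since a pullback of unital algebras along unital morphisms is again unital, $B\times_C C'\in\AssAlg_1$ and the lifting is once more a crossed $n$-fold biextension; thus the restriction of $\Pi_0$ to $\XBiext^n$ is a fibration, $(\underline{\ })_0\colon\Bimod\to\AssAlg_1$ is the evident split fibration given by restriction of scalars, and $\Pi$ preserves cartesian arrows. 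This is the same verification as in Theorem~\ref{thm:fib_opfib_AssAlg}, carried out on the two right-most terms only.

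For the opfibration, fix $C$ and a $C$-bimodule morphism $\mu\colon M\to M'$. Mirroring Lemma~\ref{lemma:pf_xext}, I would build the opcartesian lifting by taking the push out of $j\colon M\to A_n$ along $\mu$ in $k$-$\Vect$, obtaining $M'+_M A_n$ together with $j'\colon M'\to M'+_M A_n$ and an induced $d'_{n-1}\colon M'+_M A_n\to A_{n-1}$ (well defined because $d_{n-1}\cdot j=0$ by exactness), leaving $A_{n-1},\dots,A_2,B,C$ and $d_{n-2},\dots,\partial,p$ untouched. For $n\ge 3$ the term $A_n$ is genuinely a $C$-bimodule, so $M'+_M A_n$ inherits a $C$-bimodule structure by the same pointwise formulas as in the proof of Theorem~\ref{thm:fib_opfib_AssAlg}, and $d'_{n-1}$ is $C$-equivariant; exactness of the new sequence is then checked in $k$-$\Vect$. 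Finally, by the characterization of push forwards in \cite[Theorem~2.13]{pf} this $k$-$\Vect$ push out is the push forward in $\AssAlg$, so the resulting morphism $(1_C,\dots,\alpha,\mu)$ is the opcartesian lifting produced by Theorem~\ref{thm:fib_opfib_Moore}, and it lies in $\XBiext^n$.

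The main obstacle is precisely the closure of these two constructions inside the non-full subcategory $\XBiext^n$: one must check that the $k$-$\Vect$ push out carries an honest $C$-bimodule structure with equivariant $d'_{n-1}$, and that opcartesianness transfers from $\XExt^n(\AssAlg)$ to $\XBiext^n$. Reassuringly, for $n\ge 3$ the left-hand end of the sequence involves only bimodules rather than a crossed (bi)module, so the delicate crossed-module computations of Theorem~\ref{thm:fib_opfib_AssAlg} and Lemma~\ref{lemma:pf_xext} are not needed there; the genuinely $n$-dependent work reduces to the routine bookkeeping indicated above, which is why the proof is essentially the same as for $n=2$.
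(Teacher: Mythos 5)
Your proposal is correct and follows exactly the route the paper intends: the paper gives no separate argument for general $n$, stating only that the proof is ``essentially the same as in the case $n=2$'', and your restriction-from-Theorem~\ref{thm:fib_opfib_Moore} strategy, with the observation that for $n\ge 3$ the push out along $\mu$ only touches the bimodule end of the sequence so the crossed-bimodule verifications of Lemma~\ref{lemma:pf_xext} and Theorem~\ref{thm:fib_opfib_AssAlg} are not re-needed, is precisely the intended filling-in of that remark.
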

\begin{Remark}{\em
In \cite[Theorem 4.3]{BauMi02}, the authors proved that, fixed a bimodule $M$ over a unital associative algebra $C$, similarity classes of crossed $n$-fold biextensions over the $C$-bimodule $M$ form an abelian group isomorphic to the $(n+1)$-th Hochschild cohomology group $HH^{n+1}(C,M)$. So also Hochschild cohomology fits in our fibrational framework, namely such groups (or better their underlying sets) are recovered as the fibers of $\bar\Pi$ in the comprehensive factorization:
$$
\begin{aligned}
\xymatrix{
\XBiext^n\ar[r]^Q\ar[dr]_{\Pi_0}
&\overline{\XBiext}^n\ar[r]\ar[d]_{\bar\Pi_0}\ar[r]^{\bar\Pi}
&\Bimod\ar[dl]^{(\underline{\ })_0}\\&\AssAlg_1}
\end{aligned}
$$
}\end{Remark}


\appendix

\section{Basic 2-categorical notions and results} \label{sec:appendixA}

For the reader's convenience, this section provides some 2-categorical notions and results that we use throughout the paper.

Let $\cK$ be a 2-category. Recall from \cite{Str74} that a \emph{comma object} over the opspan in $\cK$:
$$
\xymatrix{A\ar[r]^-f&B&C\ar[l]_-g}
$$
is a four-tuple $((f\downarrow g),p_0,p_1,\lambda)$ in $\cK$
$$
\xymatrix{(f\downarrow g)\ar[r]^-{p_1}\ar[d]_{p_0}
&C\ar[d]^g
\\
A\ar[r]_f
\ar@{}[ur]|(.3){}="1"\ar@{}[ur]|(.7){}="2"\ar@{=>}"1";"2"_\lambda
&B}
$$
that establishes an isomorphism, 2-natural in the first variable
\begin{equation}\label{eq:universal_prop_comma}
\cK(X, (f\downarrow g) ) \cong (\cK(X, f)\downarrow\cK(X,g))
\end{equation}
where the expression on the right is the usual comma category of the functors $\cK(X, f)$ and $\cK(X,g)$. We will refer to this as to the universal property of comma objects.
Notice that sometimes
we shall write just  $(f\downarrow g)$ to mean the comma object of $f$ and $g$. If $g=1_B$, we simplify notation in $(f\downarrow B)$; accordingly, if also $f=1_B$, we write $(B\downarrow B)$.

Comma objects of the identities are rather important
\begin{equation}\label{diag:(B|B)}
\begin{aligned}
\xymatrix{(B\downarrow B)\ar[r]^-{d_1}\ar[d]_{d_0}
&B\ar[d]^1
\\
B\ar[r]_1
\ar@{}[ur]|(.3){}="1"\ar@{}[ur]|(.7){}="2"\ar@{=>}"1";"2"_\epsilon
&B}
\end{aligned}
\end{equation}
since they make the 2-cells of $\cK$ representable in terms of the 1-cells. Indeed, it is easy to see that, given a 2-cell $\alpha\colon f\Rightarrow g\colon A\to B$, $\alpha$ determines by the bijection (\ref{eq:universal_prop_comma}) a unique 1-cell $\underline\alpha\colon A\to (B\downarrow B)$ such that $\epsilon\cdot \underline\alpha=\alpha$. We call $\epsilon$ the universal 2-cell of $B$, and $i$ the diagonal $B\to (B\downarrow B)$ determined by the identity square of $1_B$.

Let us recall from \cite{Str74} that $\cK$ is said to be \emph{representable} if it has comma-objects of identities and $2$-pullbacks, so that $\cK$ has all comma objects. All the 2-categories considered in the present work are representable and finitely complete, namely $\CAT$, $\CAT/\cA$, $\Fib(\cA)$ and $\opFib(\cA)$. Hence, from now on, we shall suppose that $\cK$ is representable and finitely complete.

A 1-cell $f\colon A\to B$ is final (initial) if it is left orthogonal to any discrete (op)fibration (see Proposition \ref{prop:Chevalley}). Explicitly this means that for every commutative square
$$
\xymatrix{
A\ar[d]_f\ar[r]^{h}
&C\ar[d]^{g}
\\
B\ar@{-->}^d[ur]\ar[r]_k
&D}
$$
with $g$ a discrete (op)fibration, there exists a unique diagonal $d\colon B\to C$ such that $d\cdot f=h$ and $g\cdot d=k$.

\smallskip
\emph{Identees} and \emph{coidentifiers} are the 2-dimensional analogs of kernels and cokernels that we need to describe the factorization in  (\ref{diag:factorization_of_p}).

Following \cite{Street_size}, identees are the 2-categorical limits defined as follows: the identee of a 1-cell $f\colon A\to B$ is a 2-universal 2-cell $\kappa$ such that $f\cdot \kappa=id$.
$$
\xymatrix@C=10ex{
I(f)\ar@/^3ex/[r]^d_{}="1"\ar@/_3ex/[r]_c^{}="2" \ar@{=>}"1";"2"^\kappa&A\ar[r]^f&B}
$$
If $\cK$ is finitely complete, identees can be computed by means of the following pullback:
\begin{equation}\label{diag:identee}
\begin{aligned}
\xymatrix{
I(f)\ar[d]\ar[r]^-{\underline \kappa}
&(A\downarrow A)\ar[d]^{(f\downarrow f)}
\\
B\ar[r]_-{i}
&(B\downarrow B)
}
\end{aligned}
\end{equation}
from which we obtain $\kappa$ by composing $\underline \kappa$ with the universal 2-cell of $A$. Moreover, since $i$ is a mono, we can actually consider the identee $I(f)$  as a subobject of $(A\downarrow A)$.

Coidentifiers (or more precisely, the dual notion of identifier) have been introduced in \cite{Kelly89}. They are 2-categorical colimits defined as follows. The coidentifier of a 2-cell $\varphi\colon f\Rightarrow g\colon A\to B$, is a 2-universal 1-cell, $j\colon B\to J(\varphi)$ such that $j\cdot \varphi=id$.
$$
\xymatrix@C=10ex{
A\ar@/^3ex/[r]^f_{}="1"\ar@/_3ex/[r]_g^{}="2" \ar@{=>}"1";"2"^\varphi&B\ar[r]^-j&J(\varphi)}
$$
Discrete (op)fibrations between categories reflect equalities. The same can be said for internal discrete (op)fibrations in $\cK$. This is proved in the following statement.
\begin{Lemma}\label{lm:disc=disc}
Let $g\colon C\to D$ be a discrete (op)fibration, and $\alpha\colon  d \Rightarrow  c\colon X\to C$ a 2-cell such that $g\cdot \alpha=id_{g\cdot d}$. Then $\alpha=id_{d}$.
\end{Lemma}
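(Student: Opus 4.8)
The plan is to reduce the statement to an elementary fact about ordinary categories, using the standing assumption that $\cK$ is representable. First I would recall that, since $\cK$ is representable, for every object $X$ the hom-$2$-functor $\cK(X,-)$ computes comma objects and $2$-pullbacks pointwise, via the isomorphisms $\cK(X,(f\downarrow g))\cong(\cK(X,f)\downarrow\cK(X,g))$. Because the internal notion of (representable) discrete (op)fibration is exactly the one detected by the Chevalley criterion of Proposition \ref{prop:Chevalley}, which is phrased entirely in terms of comma objects and adjunctions, it is preserved by each $\cK(X,-)$. Hence $\cK(X,g)\colon\cK(X,C)\to\cK(X,D)$ is a genuine discrete (op)fibration of ordinary categories.

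Next I would reinterpret the $2$-cell as an arrow in a hom-category. A $2$-cell $\alpha\colon d\Rightarrow c\colon X\to C$ is precisely a morphism $\alpha\colon d\to c$ of the category $\cK(X,C)$, and the whiskering $g\cdot\alpha$ is nothing but its image $\cK(X,g)(\alpha)$ in $\cK(X,D)$. The hypothesis $g\cdot\alpha=\mathrm{id}_{g\cdot d}$ then reads as the assertion that this image is an identity arrow; in particular $g\cdot c=g\cdot d$.

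The heart of the argument is the uniqueness of liftings. Working in $\CAT$ with the discrete opfibration $\cK(X,g)$, the arrow $\mathrm{id}_{g\cdot d}$ has domain $\cK(X,g)(d)$, and the unique arrow of $\cK(X,C)$ with domain $d$ lying over $\mathrm{id}_{g\cdot d}$ is $\mathrm{id}_d$. Since $\alpha$ is an arrow with domain $d$ whose image under $\cK(X,g)$ equals $\mathrm{id}_{g\cdot d}$, uniqueness forces $\alpha=\mathrm{id}_d$. The discrete fibration case is formally dual: there one uses that the unique arrow with codomain $c$ lying over $\mathrm{id}_{g\cdot c}$ is $\mathrm{id}_c$, whence $\alpha=\mathrm{id}_c$ and $c=d$.

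The only genuinely delicate point is the first step, namely making sure that the internal (representable) notion of discrete (op)fibration really does transport to a discrete (op)fibration of hom-categories. I expect this to be the main obstacle, but it is already guaranteed by representability of $\cK$ together with Proposition \ref{prop:Chevalley}; once it is secured, the rest is just the uniqueness of lifts in a discrete opfibration. As a fully internal alternative, avoiding representability, I could encode $\alpha$ by its classifying $1$-cell $\underline\alpha\colon X\to(C\downarrow C)$ and compare $(g\downarrow g)\cdot\underline\alpha$ with the diagonal $i_D\cdot(g\cdot d)$, invoking that the comparison $r$ of Proposition \ref{prop:Chevalley} is an isomorphism for a discrete opfibration; this is more cumbersome, so I would favour the representable argument.
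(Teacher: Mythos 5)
Your proof is correct, but it takes a genuinely different route from the paper's. The paper argues internally: it encodes $\alpha$ as its classifying $1$-cell $\underline\alpha\colon X\to(C\downarrow C)$, notes that the hypothesis $g\cdot\alpha=\mathrm{id}_{g\cdot d}$ means that $(g\downarrow g)\cdot\underline\alpha$ factors through the diagonal of $(D\downarrow D)$, and then deduces $\underline\alpha=\Delta_C\cdot d$ (i.e.\ $\alpha=\mathrm{id}_d$) by composing with the projections of the square with vertices $(C\downarrow C)$, $(D\downarrow D)$, $C$, $D$, which is a pullback precisely because $g$ is a discrete opfibration (the comparison $r$ of Proposition \ref{prop:Chevalley} is invertible). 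That is exactly the ``fully internal alternative'' you mention and set aside in your last paragraph. Your preferred argument instead transports everything along the hom-$2$-functors $\cK(X,-)$: since the paper's (discrete) opfibrations are the representable ones, characterized by the Chevalley criterion in terms of comma objects and adjunctions --- all of which $\cK(X,-)$ preserves in a representable $2$-category --- each $\cK(X,g)$ is an honest discrete opfibration of ordinary categories, and the claim collapses to the uniqueness of the lifting of $\mathrm{id}_{g\cdot d}$ at $d$. Both arguments are sound; yours is more elementary once the pointwise transport is granted (and that transport is essentially the meaning of ``representable''), while the paper's stays entirely inside $\cK$ and isolates the single pullback square that makes the cancellation work. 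Your dual treatment of the fibration case (lift $\mathrm{id}_{g\cdot c}$ at the codomain $c$, then conclude $d=c$) is also fine.
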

\begin{proof}
We prove the statement for discrete opfibrations only, since in case of fibrations the proof is similar.

Let  $\underline\alpha$ be the 1-cell canonically determined by the 2-cell $\alpha$. The hypotesis  $g\cdot \alpha=id_{g\cdot d}$ amounts to the fact that the arrow $(g\downarrow g)\cdot \underline\alpha$ factors through the diagonal of $(D\downarrow D)$. This is described by the following commutative diagram of solid arrows
$$
\xymatrix@C=12ex{
X\ar[r]^d\ar[dr]_{\underline\alpha}
&C\ar@{-->}[d]^{\Delta_A}\ar[r]^{g}
&D\ar[d]^{\Delta_D}
\\
&(C\downarrow C)\ar[r]_{(g\downarrow g)}
&(D\downarrow D)
}
$$
where the $\Delta$'s are diagonals.
Then the  thesis is  $\underline\alpha=\Delta_A\cdot d$. We can easily prove it by composing with the projections of the following pullback, which is granted by Proposition \ref{prop:Chevalley}.
$$
\xymatrix@C=12ex{
(C\downarrow C)\ar[r]^-{(g\downarrow g)}\ar[d]_{d_0}
&(D\downarrow D)\ar[d]^{d_0}
\\
C\ar[r]_-{g}
&D
}
$$

\end{proof}

Next proposition has an elementary proof, but it is fundamental for our work. Compare with \cite[Proposition 3.6]{Str2010}.

\begin{Proposition}\label{prop:coid_final_initial}
Coidentifiers are final and initial.
\end{Proposition}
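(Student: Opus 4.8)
The plan is to verify directly the orthogonality condition defining final and initial 1-cells. Let $\varphi\colon u\Rightarrow v\colon A\to B$ be a 2-cell and let $j\colon B\to J(\varphi)$ be its coidentifier, so that $j\cdot\varphi=\id$ and $j$ enjoys the attendant universal property: every 1-cell out of $B$ that coidentifies $\varphi$ factors uniquely through $j$. I want to show $j$ is left orthogonal to every discrete fibration (finality) and to every discrete opfibration (initiality). The two cases are formally identical — they differ only in which half of Lemma \ref{lm:disc=disc} is invoked — so I would carry out the discrete fibration case in full and obtain the other by the obvious dualisation.

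So fix a discrete fibration $p\colon C\to D$ together with a commutative square having $j$ on the left, $p$ on the right, top edge $h\colon B\to C$ and bottom edge $k\colon J(\varphi)\to D$, i.e.\ $p\cdot h=k\cdot j$. The crucial observation is that $h$ automatically coidentifies $\varphi$. Indeed, consider the 2-cell $h\cdot\varphi\colon h\cdot u\Rightarrow h\cdot v$ in $C$; whiskering by $p$ gives $p\cdot(h\cdot\varphi)=(p\cdot h)\cdot\varphi=(k\cdot j)\cdot\varphi=k\cdot(j\cdot\varphi)=\id$. Since $p$ is a discrete fibration, Lemma \ref{lm:disc=disc} forces $h\cdot\varphi=\id$. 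This is the heart of the argument, and the only place where the discreteness hypothesis enters; everything else is formal manipulation of universal properties.

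Having established $h\cdot\varphi=\id$, the universal property of the coidentifier yields a unique 1-cell $d\colon J(\varphi)\to C$ with $d\cdot j=h$, which settles both existence and uniqueness of the diagonal (any $d'$ with $d'\cdot j=h$ coincides with $d$ by the same uniqueness clause). It then remains only to check $p\cdot d=k$, for which I would use that $j$ is an epimorphism: if $m\cdot j=m'\cdot j$ for 1-cells $m,m'\colon J(\varphi)\to D$, then their common composite coidentifies $\varphi$ (being $(m\cdot j)\cdot\varphi=m\cdot\id=\id$), whence $m=m'$ by uniqueness of the factorisation. Applying this to $p\cdot d$ and $k$, whose composites with $j$ agree since $(p\cdot d)\cdot j=p\cdot h=k\cdot j$, gives $p\cdot d=k$. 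Thus $j$ is final; the verbatim dual, with a discrete opfibration and the opfibration half of Lemma \ref{lm:disc=disc}, shows $j$ is initial. I expect no real obstacle here beyond correctly identifying the substantive step as the reduction $p\cdot(h\cdot\varphi)=\id\Rightarrow h\cdot\varphi=\id$, after which finality and initiality both drop out of the coidentifier's universal property.
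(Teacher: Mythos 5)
Your proof is correct and follows essentially the same route as the paper: whisker the square with $\varphi$ to get $p\cdot h\cdot\varphi = k\cdot j\cdot\varphi = \mathrm{id}$, invoke Lemma \ref{lm:disc=disc} to conclude $h\cdot\varphi=\mathrm{id}$, and then apply the universal property of the coidentifier to produce the unique diagonal. Your justification of $p\cdot d=k$ via the uniqueness clause of the coidentifier merely spells out what the paper compresses into ``by uniqueness of comparisons.''
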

\begin{proof}
Consider the following diagram
$$
\xymatrix{
X\ar@/_2.5ex/[d]_{d}^{}="1"
\ar@/^2.5ex/[d]^{c}_{}="2"
\ar@{=>}"1";"2"^{\alpha}
\\
A\ar[d]_j\ar[r]^{h}
&C\ar[d]^{g}
\\
J(\alpha)\ar@{-->}^d[ur]\ar[r]_k
&D}
$$
where $J(\alpha)$ is a coidentifier of $\alpha$, $g$ is a discrete (op)fibration and $g\cdot h=k \cdot j$. Then of course $g\cdot h\cdot \alpha=k \cdot j\cdot \alpha$, but the term on the right is an identity, since $j$ coidentifies $\alpha$. Therefore, by the previous lemma, $h\cdot \alpha=id$, so that, for the universal property of coidentifiers, there exists a unique $d$ as above such that $j\cdot d=h$. Moreover, by uniqueness of comparisons, $g\cdot d=k$, and this completes the proof.
\end{proof}

\begin{Remark}\label{rk:factorization}{\em
Let $\cK$ be a representable, finitely complete 2-category in which every 2-cell has a coidentifier. Then every 1-cell  $f\colon A\to B$ can be factored as follows: first take the identee $(I(f),\kappa)$ of $f$, then compute its coidentifier $(J(\kappa),j)$. Clearly, since $f\cdot \kappa=id$, $f$ factors to a 1-cell $d$, so that $f=d\cdot j$ is the expected factorization:
$$
\xymatrix@C=8ex{
I(f)\ar@/^3ex/[r]^d_{}="1"\ar@/_3ex/[r]_c^{}="2" \ar@{=>}"1";"2"^\kappa&
A\ar[r]^-j
&J(\kappa)\ar[r]^-d
&B}
$$
}\end{Remark}

\section*{Acknowledgments}

The research that led to the present paper was partially supported by a grant of
the group GNSAGA of INdAM.


\end{document}